\newcommand\fs@myruledtop{\def\@fs@cfont{\bfseries}\let\@fs@capt\floatc@ruled
  \def\@fs@pre{\hrule height.8pt depth0pt \kern2pt}%
  \def\@fs@post{\kern2pt}%
  \def\@fs@mid{\kern2pt\hrule\kern2pt}%
  \let\@fs@iftopcapt\iftrue}
\newcommand\fs@myruledbottom{\def\@fs@cfont{\bfseries}\let\@fs@capt\floatc@ruled
  \def\@fs@pre{\kern2pt}%
  \def\@fs@post{\kern2pt\hrule\relax}%
  \def\@fs@mid{}%
  \let\@fs@iftopcapt\iftrue}
\newif\ifalgobreak
\newif\ifalgobreaksecond
\numberwithin{equation}{section}
\def\be{\begin{equation}}
\def\ee{\end{equation}}
\def\bea{\begin{eqnarray}}
\def\eea{\end{eqnarray}}
\def\bd{\begin{displaymath}}
\def\ed{\end{displaymath}}
\def\bda{\begin{eqnarray*}}
\def\eda{\end{eqnarray*}}
\newcommand{\beq}{\begin{eqnarray*}}
\newcommand{\eeq}{\end{eqnarray*}}
\newcommand{\beqn}{\begin{eqnarray}}
\newcommand{\eeqn}{\end{eqnarray}}
\def\bsm{\begin{small}}
\def\esm{\end{small}}
 \def\lsk{\left(}
\def\rsk{\right)}
\def\lbk{\left \{ }
\def\rbk{\right \} }
\def\lmk{\left [ }	
\def\rmk{\right ] }
\def\labs{\left | }
\def\rabs{\right | }
\def\t0{\theta_0}
\def\ha1{\wh \beta_1}
\newcommand{\bone}{\mbox{\bf 1}}
\def\bnt{\begin{enumerate}}
\def\ent{\end{enumerate}}
\def\T{{ \mathrm{\scriptscriptstyle T} }}
\def\bsc{\begin{scriptsize}}
\def\esc{\end{scriptsize}}
\newtheorem{theorem}{Theorem}[section]
\newtheorem{lemma}{Lemma}[section]
\newtheorem{corollary}{Corollary}[section]
\newtheorem{assumption}{Assumption}[section]
\newtheorem{remark}{Remark}[section]
\newcommand{\wh}{\widehat}
\newcommand{\figcaption}{\def\@captype{figure}\caption}
\newcommand{\tabcaption}{\def\@captype{table}\caption}
\newcommand{\diag}{{\rm diag}}
\newcommand{\var}{\mbox{$\mathrm{Var}$}}
\newcommand{\cov}{{\rm \mathbb{C}ov}}
\renewcommand{\P}{\mathbb{P}}
\newcommand{\e}{\mathbb{E}}
\newcommand{\p}{\mathbb{P}}
\newcommand{\bA}{{\mathbf A}}
\newcommand{\bI}{{\mathbf I}}
\newcommand{\bM}{{\mathbf M}}
\newcommand{\bP}{{\mathbf P}}
\newcommand{\bQ}{{\mathbf Q}}
\newcommand{\bS}{{\mathbf S}}
\newcommand{\bV}{{\mathbf V}}
\newcommand{\bX}{{\mathbf X}}
\newcommand{\bY}{{\mathbf Y}}
\newcommand{\bZ}{{\mathbf Z}}
\newcommand{\ba}{{\mathbf a}}
\newcommand{\bb}{{\mathbf b}}
\newcommand{\bfe}{{\mathbf e}}
\newcommand{\bq}{{\mathbf q}}
\newcommand{\bt}{{\mathbf t}}
\newcommand{\bu}{{\mathbf u}}
\newcommand{\bv}{{\mathbf v}}
\newcommand{\bw}{{\mathbf w}}
\newcommand{\bz}{{\mathbf z}}
\newcommand{\bvphi}  {\boldsymbol{\varphi}}
\newcommand{\bepsilon}{\boldsymbol{\varepsilon}}
\newcommand{\bxi} {\boldsymbol{\xi}}
\newcommand{\bmu} {\boldsymbol{{\mathbf \mu}}}
\newcommand{\bSigma}{{\mathbf \Sigma}}
\newcommand{\bLambda} {\boldsymbol{\Lambda}}
\newcommand{\deq}{\overset{L}{=}}
\newcommand{\tr}{\mathrm{tr}}
\newcommand{\mt}{^{\T}}
\newcommand{\inverse}{^{-1}}
\newcommand{\pwtwo}{^2}
\newcommand{\bzero}{{\mathbf 0}}
\def\T{{ \mathrm{\scriptscriptstyle T} }}
\def\v{{\varepsilon}}
\newcommand{\Perp}{\perp \! \! \! \perp}
\newcommand\reallywidecheck[1]{%
\savestack{\tmpbox}{\stretchto{%
  \scaleto{%
    \scalerel*[\widthof{\ensuremath{#1}}]{\kern-.6pt\bigwedge\kern-.6pt}%
    {\rule[-\textheight/2]{1ex}{\textheight}}
  }{\textheight}%
}{0.5ex}}%
\stackon[1pt]{#1}{\scalebox{-1}{\tmpbox}}%
}
\def\@makefnmark{%
  \leavevmode
  \raise.9ex\hbox{\fontsize\sf@size\z@\normalfont\tiny\@thefnmark}}
    \let\@fnsymbol\@arabic
\newcommand{\Viiup}{V_{\mathrm{up}}}
\newcommand{\Viilow}{V_{\mathrm{lo}}}
\tikzset{
    nomorepostaction/.code=\makeatletter\let\tikz@postactions\pgfutil@empty, 
    my axis/.style={
        postaction={
            decoration={
                markings,
                mark=at position 1 with {
                    \arrow[ultra thick]{latex}
                }
            },
            decorate,
            nomorepostaction
        },
        thin,
        -, 
        every path/.append style=my axis 
    }
}
\begin{document}

\date{}

\title{With random regressors, least squares inference is robust to correlated errors with unknown correlation structure
}

\author{
 Zifeng Zhang$^{\sharp}$, ~Peng Ding$^{\S}$, Wen Zhou$^*$, and  Haonan Wang$^{\sharp}$ 
 \\[3ex]
\normalsize  $^{\sharp}$ Department of Statistics, Colorado State University, Fort Collins, CO\\[0.5ex] 
\normalsize  $^{\S}$ Department of Statistics, University of California, Berkeley, Berkeley, CA\\[0.5ex]
\normalsize  $^*$ Department of Biostatistics, School of Global Public Health, \\ \normalsize   New York University, New York, NY
}
\maketitle

\begin{abstract}
Linear regression is arguably the most widely used statistical method. 
With fixed regressors and correlated errors, the conventional wisdom is to modify the variance-covariance estimator to accommodate the known correlation structure of the errors.  We depart from the literature by showing that with random regressors, linear regression inference is robust to correlated errors with unknown correlation structure.  The existing theoretical analyses for linear regression are no longer valid because even the asymptotic normality of the least-squares coefficients breaks down in this regime.  We first prove the asymptotic normality of the $t$ statistics by establishing their Berry--Esseen bounds based on a novel probabilistic analysis of self-normalized statistics. We then study the local power of the corresponding $t$ tests and show that, perhaps surprisingly, error correlation can even enhance power in the regime of weak signals.  Overall, our results show that linear regression is applicable more broadly than the conventional theory suggests, and further demonstrate the value of randomization to ensure robustness of inference. 

\vspace{0.25cm}
\noindent {\bf Keywords:} Asymptotic normality; 
Linear regression; 
Random design; 
Randomization.
\end{abstract}

\section{Linear regression: fixed design, random design, and error distribution} 

\subsection{Literature review and our perspective}

Linear regression is widely used in many disciplines and has attracted continued interest in statistics research; see \citet[][Appendix A]{lei2021assumption} for a recent review.  The classic linear model $y=X \beta + \bepsilon$ assumes that the $n\times d$ covariate matrix $X$ is fixed and the  $n$-dimensional error vector $\bepsilon$ has independent and identically distributed normal components with mean $0$ and variance $\sigma^2$. 
Under this model, (a) the ordinary least squares (OLS) estimator $\widehat{\beta}=(X\mt X)^{-1}X\mt y$ is normal with mean $\beta$ and covariance $\mathrm{cov}(\widehat{\beta}) = \sigma^2 (X^{\T} X)^{-1}$, (b) $\widehat{\sigma}^2 = y^\T ( I- P_{X}) y /(n-d)$ is unbiased for $\sigma^2$,
with $\widehat{\sigma}^2 / \sigma^2\sim \chi^2_{n-d} / (n-d)$,
where $P_{X} = X(X\mt X)^{-1}X\mt $ is the projection matrix onto the column space of $X$, 
and (c) $\widehat{\beta}$ and $\widehat{\sigma}^2$ are independent. 
The results (a)--(c) justify the statistical inference based on the pivotal quantity $ T_j = L_j^{-1}(\widehat{\beta}_j-\beta_j) \sim t_{n-d}$, where $\beta_j$ and $\widehat{\beta}_j$ are the $j$th coordinate of $\beta$ and $\widehat{\beta}$, respectively, and $L_j =
\widehat{\sigma} \{ e_j^\T (X^\T X)^{-1} e_j \}^{1/2}$ is the standard error of $\widehat{\beta}_j$ with $e_j$ being the $j$th basis vector in the $d$-dimensional space.

There is a large literature on relaxing the assumption of independent and identically distributed normal errors. 
First, we can relax the normality assumption but can still show  $T_j \rightarrow \mathcal{N}(0,1) $ in distribution by the law of large numbers and central limit theorem. The change from the $t$ quantiles to normal quantiles is small when $n$ is large compared with $d$. 
Second, we can relax the homoskedasticity assumption on the errors. With heteroskedastic errors, \citet{eicker1967limit} and \citet{white1980heteroskedasticity} proposed a heteroskedasticity-robust covariance estimator. 
Third, we can further allow for dependence among the errors. With clustered errors, \citet{liang1986longitudinal} proposed to use the cluster-robust covariance estimator. With time series errors, \citet{newey1987simple} proposed to use the autocorrelation-robust covariance estimator. With spatial or network correlated errors, we can construct the corresponding robust covariance estimators.

As reviewed above, the literature focuses on modifying the standard error in constructing the $t$ statistic, under various known correlation structures of the errors. Departing from the literature, 
we study the robustness of the original OLS inference procedure with respect to correlated errors with unknown correlation structure. 
We do not modify the original definition of the $t$ statistic but show that $T_j  \rightarrow \mathcal{N}(0,1)$ in distribution still holds under the assumption of random regressors even if the errors have an unknown correlation structure. 
With correlated errors, the asymptotic normality of $\widehat{\beta}_j$ breaks down in general. However, the central limit theorem for the $t$-statistic $T_j$ can still hold in that regime. Intuitively, $T_j$ has a ratio form and the correlation effect of the error $\bepsilon$ cancels out because it appears in both the numerator and the denominator. To make this argument rigorous, we will first show that the key stochastic component in $T_j$ is approximately $X\mt  \bepsilon  /  \| \bepsilon \| $ and then show the self-normalized error $\bepsilon  /  \| \bepsilon \| $ is nearly uniformly distributed over the unit sphere as long as the correlation is not extremely strong. Due to these two facts, the randomness of $T_j$ is approximately driven by the sample mean of the rows of $X$, which follows the central limit theorem with random regressors. See Section \ref{all of coverage prob} for more details.

In short, our theory demonstrates that OLS inference is valid even with correlated errors, as long as the regressors are random. With fixed regressors, the correlated errors will invalidate OLS inference in general. Therefore, with fixed regressors or conditional on random regressors, the $p$-values from OLS can be non-uniform under the null hypotheses due to correlated errors. However, averaged over the randomness of the regressors, the $p$-values become uniform under the null hypotheses even if the errors are correlated in unknown ways. Overall, our theory shows that OLS inference is applicable more broadly than the classic theory suggests.

Importantly, the regime of random regressors arises naturally from randomized experiments, in which the experimenter has control over the distribution of the treatment. Therefore, our theory further demonstrates the value of randomization to ensure robustness of inference. 
Our setting with random regressors is reminiscent of the framework of randomization-based inference. 
In that literature, the focus was the robustness of inference with misspecified models \citep{lin2013agnostic}. 
In contrast, we focus on the robustness of  inference with correlated errors.

\subsection{A simulated example to motivate the theory}
\label{sec::motivating-example}

To motivate the development of the theory, we start with the following simple yet nontrivial example. 
We generate data from the linear model $y_i = x_i \beta_1 + \bepsilon_i$ for $i=1,\ldots,n$ with $n=100$, where the $x_i$'s are independent and identically distributed Rademacher random variables, each with a probability of $1/2$ being either $+1$ or $-1$, and the $\bepsilon_i$'s are multivariate normal with  $\mathrm{cov}(\bepsilon_i, \bepsilon_j) = V_{ij} = \rho^{|i-j|}$. We will vary $\rho$ from $-0.9$ to $0.9$ in the simulation to investigate the impact of the strength of correlation on inference. This simple model is not completely unrealistic. For instance, if $x_i$ is the unit-level randomized treatment status with $+1$ for the treatment and $-1$ for the control, then the average treatment effect equals $E ( y_i\mid  x_i=1 ) - E ( y_i \mid  x_i = -1 ) = 2 \beta_1$.

\begin{figure}[h]
\centering
               \vspace{-0.5\baselineskip}
        \begin{subfigure}[h]{0.5\textwidth}
        \caption{}
                \includegraphics[height=2.75cm, width=0.9\linewidth]{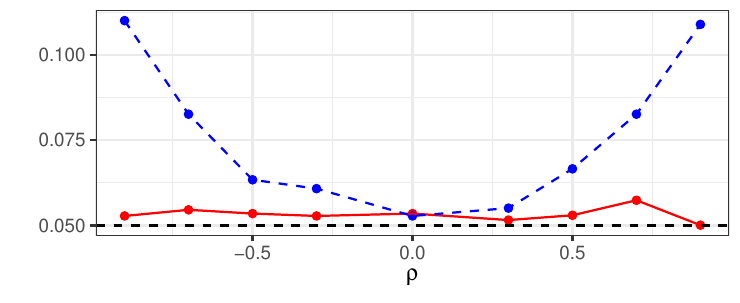}
          \label{fig: cp comparison fixed vs random X with AR1 error}  
        \end{subfigure}%
        \begin{subfigure}[h]{0.5\textwidth}
        \caption{}
                \includegraphics[height=2.75cm, width=0.9\linewidth]{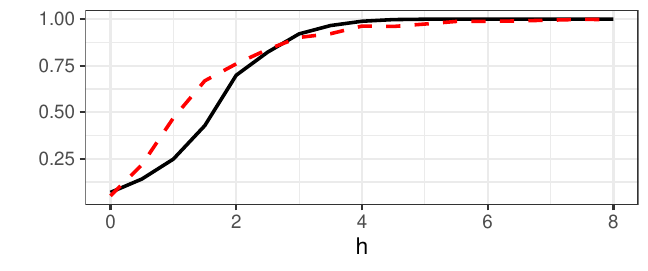}
                \label{fig: empirical power curves}
       \end{subfigure}%
               \vspace{-0.25\baselineskip}
        \caption{(a) Non-coverage probabilities of random (solid) and fixed (dashed) design. (b) Empirical power of independent and identically distributed (solid) and correlated (dashed) errors.}
         \label{fig: motivating example}
                        \vspace{-0.75\baselineskip}
\end{figure}

Based on $10,000$ replications, we calculate the non-coverage probabilities of the confidence interval $\widehat{\beta}_1 \pm 1.96 L_1$ of the true parameter $\beta_1=1$. 
While the $x_i$'s are regenerated for each replication under the random design, they are kept unchanged across replications under the fixed design. Figure \ref{fig: motivating example}(a) shows that under the random design, the confidence interval remains valid, but under the fixed design, it is not valid due to the correlated errors.

Moreover, we study the local power of the one-sided test $\mathcal{I} (\widehat{\beta}_1/L_1 > 1.64)$. 
We consider the regime of $\beta_1 \asymp (n-1)^{-1/2} h$ with $h$ varying from $0$ to $8$, and 
set $\mathrm{cov} (\bepsilon) = V_{1,\rho} = (1-\rho)I_n + \rho 1_n 1_n^{\T}  \in \mathbb{R}^{n \times n}$ 
with the diagonal entries all being 1 and  off-diagonal entries all being $\rho$. In Figure. \ref{fig: motivating example}(b), 
we set $\rho = 0.9$.   
Perhaps surprisingly,  Figure \ref{fig: motivating example}(b) shows that the $t$ test has larger empirical power with correlated errors compared with independent errors when the signal is small.

Figure \ref{fig: motivating example} reveals some new phenomena which only appear with random regressors.  
To demystify Figure \ref{fig: motivating example}(a), we will demonstrate the validity of  $T_j$ under classic OLS by establishing its Berry--Esseen bound in Section \ref{all of coverage prob}. 
To demystify Figure \ref{fig: motivating example}(b), we will study the local power function of the $t$ test in Section \ref{all of power}. We first introduce the regularity conditions for our theory below. 
For a random variable $A$, let $\| A \|_{\psi_2} = \inf [ t>0: E\{ \exp (A^2 / t^2) \} \leq 2 ]$.

\begin{assumption}
\label{assumption::regularity-conditions}
Define the average variance as $\sigma^2 = n^{-1} \sum_{i=1}^n \mathrm{var}(\bepsilon_i)$ with possibly nonconstant values of $\mathrm{var}(\bepsilon_i)$.  Define $V = \sigma^{-2} \mathrm{cov}(\bepsilon)$ such that $\mathrm{tr}(V)=n$, which equals the correlation matrix of the errors when $\mathrm{var}(\bepsilon_i) = \sigma^2$ for all $i=1, \ldots, n$. The $(n, d)$ satisfies $d / n^{1/2} \rightarrow 0$ as $n\rightarrow \infty$. 

(i) $\bepsilon=\sigma V^{1/2} w$, where $V \in \mathbb{R}^{n \times n}$ is positive definite, and $w = (w_1, \ldots,  w_n)^{\T} \in \mathbb{R}^n$ has independent sub-Gaussian entries $w_i$ with zero mean, unit variance, and $\max_{1 \leq i \leq n} \| w_i\|_{\psi_2} \leq K_w$ for some constant $K_w>0$.

(ii) $X = Z \Sigma^{1/2}$, where $\Sigma \in \mathbb{R}^{d \times d}$ is positive definite, and $Z \in \mathbb{R}^{n \times d}$ has independent sub-Gaussian entries $z_{ij}$ with zero mean, unit variance, and $\max_{1 \leq i \leq n, 1 \leq j \leq d} \|z_{ij}\|_{\psi_2} \leq K_z$ for some constant $K_z > 0$.

(iii) $Z$ and $w$ are independent, and  $\mathrm{pr} ( Z\mt Z\ \mathrm{is\ singular} ) = 0$. 

(iv) $V_{\mathrm{lo}} \leq \mathrm{var} ( \bepsilon_i ) \leq V_{\mathrm{up}}$ for some constants $V_{\mathrm{lo}}, V_{\mathrm{up}}>0$, for all $i=1, \ldots, n$.

(v) $\mathrm{pr} ( \widehat{\sigma}^2 = 0 ) = 0$. 

\end{assumption}

Assumption \ref{assumption::regularity-conditions}(i) excludes heavy-tailed errors. 
Assumption \ref{assumption::regularity-conditions}(ii) emphasizes the condition on random regressors, and specifies the rows of $X$, denoted by $x_i$ for $i=1,\ldots,n$, as independent with zero mean and covariance $\Sigma$.
Assumption \ref{assumption::regularity-conditions}(iii) imposes the standard assumption of independence between the regressors and errors, and rules out degeneracy in the regressors. 
Assumption \ref{assumption::regularity-conditions}(iv) allows for heteroskedasticity but bounds the relative heteroskedasticity across units. 
Assumption \ref{assumption::regularity-conditions}(v) rules out the possibility of degenerate residuals, which is useful for simplifying the proofs. 
 
We use the following notation throughout the paper. 
For sequences $\{a_n\}$ and $\{b_n\}$, we write $a_n \lesssim b_n$ and $a_n \gtrsim b_n$ if there exists a positive integer $N$ such that for all $n > N$, we have $a_n\leq C_1 b_n$ and $a_n\geq C_2 b_n$ for some absolute constants $C_1$ and $C_2$, respectively.
Let $\Phi(\cdot)$ denote the cumulative distribution function of $\mathcal{N}(0,1)$, and let $z_{\alpha}$ denote the $\alpha$ upper quantile of $\mathcal{N}(0,1)$.
Let $\lambda_{\min}(V)$, $\lambda_{\max}(V)$ and $\lambda_i(V)$ denote the smallest, the largest, and the $i$th largest eigenvalues of the matrix $V$, respectively.

\section{Validity of OLS inference with correlated errors}
\label{all of coverage prob}

The key theoretical result to ensure the robustness of the classic OLS inference is the asymptotic normality of the $t$ statistic. 
Theorem \ref{eqn: subG coverage prob} below gives the Berry--Esseen bound on $T_j$.

\begin{theorem}
[Berry--Esseen bound on $T_j$]\label{Thm: subG coverage prob}
Under Assumption \ref{assumption::regularity-conditions}, 
we have
\begin{equation}
\sup_{t \in \mathbb{R}} \big| \mathrm{pr} ( T_j \leq t )  - \Phi(t) \big| \lesssim  \lambda_{\min}^{-3/2} ( V ) \cdot  \max ( d, \log n ) \cdot n^{-1/2} .
\label{eqn: subG coverage prob}
\end{equation}
\end{theorem}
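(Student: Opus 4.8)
The plan is to condition on the error vector $\bepsilon$ (equivalently on $w$) and exploit the randomness of the regressors, turning $T_j$ into a self-normalized sum of independent terms to which a classical Berry--Esseen theorem applies. First I would rewrite the statistic. Since $(I-P_X)X=0$, one has $\wh\sigma^2 = \bepsilon\mt(I-P_X)\bepsilon/(n-d)$ and $\wh\beta_j-\beta_j = e_j\mt(X\mt X)^{-1}X\mt\bepsilon$, so that
\[
T_j = \frac{e_j\mt(X\mt X)^{-1}X\mt\bepsilon}{\wh\sigma\,\{e_j\mt(X\mt X)^{-1}e_j\}^{1/2}}.
\]
Writing $X=Z\Sigma^{1/2}$ gives $(X\mt X)^{-1}=\Sigma^{-1/2}(Z\mt Z)^{-1}\Sigma^{-1/2}$ and $P_X=P_Z$, so $T_j$ depends on $\Sigma$ only through the fixed direction $\tilde e=\Sigma^{-1/2}e_j$, and is invariant to its scale; by rotational invariance of the distribution of $Z$ I may therefore reduce to the isotropic case $\Sigma=I$, in which the rows $z_i$ of $Z$ are independent, mean zero, identity covariance, sub-Gaussian.

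The leading-order analysis proceeds conditionally on $\bepsilon$, where $Z\mt\bepsilon=\sum_{i=1}^n z_i\bepsilon_i$ is a sum of independent mean-zero terms. Replacing $(Z\mt Z)^{-1}$ by $n^{-1}I$ and $\wh\sigma^2$ by $\|\bepsilon\|^2/n$ reduces $T_j$ to the weighted sum $\sum_{i=1}^n z_{ij}c_i$ with weights $c_i=\bepsilon_i/\|\bepsilon\|$ satisfying $\sum_i c_i^2=1$, where $z_{ij}=e_j\mt z_i$ are independent, mean zero, unit variance, and sub-Gaussian. The classical Berry--Esseen theorem then gives, conditionally on $\bepsilon$,
\[
\sup_{t\in\mathbb{R}}\big|\mathrm{pr}\big(\textstyle\sum_i z_{ij}c_i\le t\mid\bepsilon\big)-\Phi(t)\big| \;\lesssim\; \sum_{i=1}^n|c_i|^3\,\mathbb{E}|z_{ij}|^3 \;\lesssim\; \frac{\sum_{i=1}^n|\bepsilon_i|^3}{\|\bepsilon\|^3}.
\]
To bound this self-normalized third moment I would use $\|\bepsilon\|^2=\sigma^2 w\mt V w\ge\sigma^2\lambda_{\min}(V)\|w\|^2\gtrsim\sigma^2\lambda_{\min}(V)n$ on a high-probability event (concentration of $\|w\|^2$), together with $\sum_i|\bepsilon_i|^3\lesssim\sigma^3 n$ from sub-Gaussianity and the bound $\mathrm{var}(\bepsilon_i)\le V_{\mathrm{up}}$, yielding the conditional bound $\lambda_{\min}^{-3/2}(V)\,n^{-1/2}$. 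This is exactly where the factor $\lambda_{\min}^{-3/2}(V)$ originates: it is the price of the crude lower bound on $\|\bepsilon\|$, quantifying the requirement that the correlation not be too strong for $\bepsilon/\|\bepsilon\|$ to remain sufficiently spread out over the sphere.

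It then remains to control the approximation errors incurred in the reduction, which is where $\max(d,\log n)$ enters. I would quantify three sources: the deviation $\|n^{-1}Z\mt Z-I\|$, of order $(d/n)^{1/2}$ by standard sub-Gaussian random-matrix bounds; the projection correction $\|P_Z\bepsilon\|^2\lesssim d\sigma^2$ relative to $\|\bepsilon\|^2\asymp\sigma^2 n$, of order $d/n$, together with the replacement of $n-d$ by $n$; and the maximal fluctuations of $\bepsilon$, where controlling $\max_i\bepsilon_i^2\lesssim\sigma^2\log n$ and the attendant concentration events contributes the $\log n$ term. Each perturbs $T_j$ multiplicatively or additively by a factor of order $\max(d,\log n)\,n^{-1/2}$ (up to $\lambda_{\min}$ factors), and I would convert these into Kolmogorov distance using the boundedness of the standard normal density, so that a small perturbation of the statistic changes its distribution function by a controlled amount. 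Integrating the conditional Berry--Esseen bound over the high-probability event for $\bepsilon$, and adding the approximation errors and the negligible probability of the bad events, then yields the stated bound.

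The hard part will be this last step: propagating the random-matrix and projection errors into the supremum over $t$ while tracking the exact $\max(d,\log n)$ dependence without losing the $n^{-1/2}$ rate. In particular, coupling the conditional self-normalized Berry--Esseen bound (randomness in $Z$, with $\bepsilon$ fixed) with the concentration arguments (randomness in $\bepsilon$) requires an anti-concentration argument to absorb the event where $\|\bepsilon\|$ is atypically small, and care is needed so that the operator-norm control of $(Z\mt Z)^{-1}$ does not degrade the rate through the coupling between the numerator and denominator, both of which involve $Z$.
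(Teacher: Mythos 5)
Your proposal follows essentially the same route as the paper's proof: approximate $T_j$ by the self-normalized linear form $u_j^{\T} Z^{\T} v=\sum_{i}(u_j^{\T}z_i)v_i$ with $v=\bepsilon/\|\bepsilon\|$, apply the classical Berry--Esseen bound conditionally on $\bepsilon$ to get the $\sum_i|v_i|^3$ term (which the paper controls via the sub-Gaussian tail $\|v_i\|_{\psi_2}\lesssim\{n\lambda_{\min}(V)\}^{-1/2}$ of Lemma \ref{lemma: self-normlz epsilon sub-G property}, equivalent to your lower bound $\|\bepsilon\|^2\gtrsim\sigma^2\lambda_{\min}(V)n$ on a high-probability event), and then control the replacement of $(Z^{\T}Z)^{-1}$, $\widehat\sigma^2$, and $n-d$ by their limits through singular-value concentration and the bounded normal density, with the $\sqrt{\log n}$ deviation levels needed for $O(n^{-1/2})$ tail probabilities producing the $\max(d,\log n)$ factor. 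The one incorrect step is the reduction to $\Sigma=I$ ``by rotational invariance of the distribution of $Z$'': Assumption \ref{assumption::regularity-conditions}(ii) only gives independent sub-Gaussian entries, which is not a rotationally invariant law; however, this reduction is unnecessary, since one can work directly with the fixed unit vector $u_j=\Sigma^{-1/2}e_j/\{e_j^{\T}\Sigma^{-1}e_j\}^{1/2}$ and note that $u_j^{\T}z_i$ is mean-zero, unit-variance, sub-Gaussian with $\|u_j^{\T}z_i\|_{\psi_2}\lesssim K_z$, which is all the conditional Berry--Esseen step requires.
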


If  $\lambda_{\min}(V) \geq c_{\mathrm{min}}>0$ for an absolute constant $c_{\mathrm{min}}$, the bound in \eqref{eqn: subG coverage prob} converges to 0 as long as $d / n^{1/2} \rightarrow 0$, which is required by Assumption \ref{assumption::regularity-conditions} and matches the condition invoked by \citet{bickel1983bootstrapping} to prove the asymptotic normality of the least squares coefficient with fixed regressors.
Even if the errors are strongly correlated with $\lambda_{\min}(V) \to 0$, the bound in \eqref{eqn: subG coverage prob} is still useful for establishing the central limit theorem of $T_j$ as long as the bound converges to 0.

Although Theorem \ref{Thm: subG coverage prob} looks similar to the classic Berry--Esseen bound with fixed regressors and independent errors, the mathematical details differ fundamentally. 
In particular, if we standardize the OLS coefficient by its true standard error, $T_j' = \{\sigma^2 e_j^{\T} (X\mt X)\inverse e_j \}^{-1/2} (\widehat{\beta}_j - \beta_j)$ does not satisfy the central limit theorem if the errors have a general correlation structure. 
Only when we standardize the OLS coefficient by its estimated standard error, $T_j = \{\widehat{\sigma}^2 e_j^\T (X\mt X)\\ \inverse e_j \}^{-1/2} (\widehat{\beta}_j - \beta_j)$ satisfies the central limit theorem. 
We revisit the simulation example in Section \ref{sec::motivating-example} to illustrate this phenomenon. 
Panels A and B of Figure \ref{fig: density of tstat with sigma and S} show the empirical densities of $T_1'$ and $T_1$, respectively. In the simulation, we set $\beta_1=1$ and 
$\mathrm{cov}(\bepsilon) = V = V_{1, \rho}$, 
with $\rho=0$ for independent errors and $\rho = 0.9$ for equally correlated errors. In Panel A, the empirical density of $T_1'$ does not match that of $\mathcal{N}(0,1)$ when the errors are correlated, whereas in Panel B, the empirical density of $T_1$ matches that of $\mathcal{N}(0,1)$ regardless of the correlation of the errors.

\begin{figure}[h]
\centering
        \includegraphics[height=3.5cm, width=0.9\linewidth]{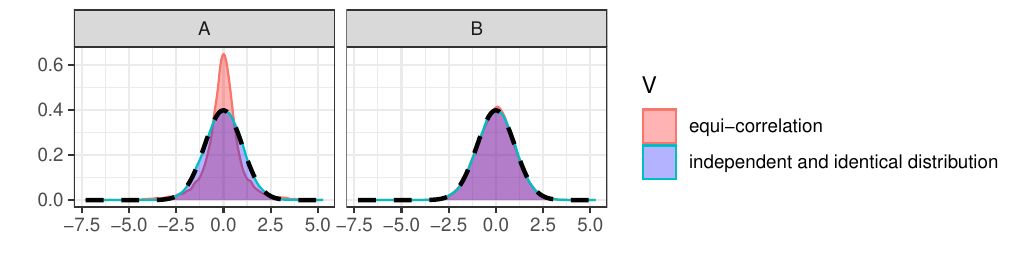}   
       \caption{Empirical densities of $T_1' = \{ \sigma^2 e_1^\T (X\mt X)\inverse e_1 \}^{-1/2} (\widehat{\beta}_1 - \beta_1)$ in Panel A and empirical densities of $T_1 = \{ \widehat{\sigma}^2 e_1^\T (X\mt X)\inverse e_1 \}^{-1/2} (\widehat{\beta}_1 - \beta_1)$ in Panel B, under different correlation structures of the errors. The black dashed curves are the $\mathcal{N}(0,1)$ density.}
       \label{fig: density of tstat with sigma and S}
\end{figure}

To understand the central limit theorem ensured by Theorem \ref{Thm: subG coverage prob}, we provide some heuristics below. 
Let $\widehat{\Sigma} = n^{-1} X \mt X = n^{-1}\sum_{i=1}^n x_i x_i^{\T} $ denote the empirical second moment of covariates. 
Then we can rewrite $T_j$ as
$$
T_j = 
\{\widehat{\sigma}^2 e_j^\T (X\mt X)\inverse e_j \}^{-1/2} (\widehat{\beta}_j - \beta_j)
=
  ( e_j^{\T} \widehat{\Sigma}^{-1} e_j )^{-1/2} e_j^{\T}   \widehat{\Sigma}^{-1}    \cdot \widehat{\sigma}^{-1} \cdot ( n^{-1/2} X\mt \bepsilon ),
$$
where the first term $  ( e_j^{\T} \widehat{\Sigma}^{-1} e_j )^{-1/2} e_j^{\T}   \widehat{\Sigma}^{-1} $ is unrelated to $V$, while $\widehat{\sigma}^{-1}$ and $n^{-1/2} X\mt \bepsilon$ are related to $V$. The classic theory of OLS proves (a) the consistency of $\widehat{\sigma}$ and (b) the asymptotic normality of $ n^{-1/2} X\mt \bepsilon$. 
Then Slutsky's theorem ensures the validity of the inference based on the asymptotic normality of $T_j$. However, both (a) and (b) break down if the errors have 
an unknown correlation structure $V$. 
Nevertheless, the asymptotic normality of $T_j$ still holds even though (a) and (b) do not hold. 
The theoretical justification of the asymptotic normality of $T_j$ is completely different from the classic theory. 
We will provide the heuristics for the asymptotic normality of $\widehat{\sigma}^{-1} \cdot ( n^{-1/2} X\mt \bepsilon )$. 
Assume $d$ is small compared with $n$. 
Approximately, we have 
$$
\widehat{\sigma}^{-1} \cdot ( n^{-1/2} X\mt \bepsilon )
\approx \{ \bepsilon^{\T} (I - P_{X}) \bepsilon  \}^{-1/2} \cdot  X\mt \bepsilon 
\approx ( \bepsilon^{\T} \bepsilon  )^{-1/2} \cdot  X\mt \bepsilon 
=  X\mt \frac{ \bepsilon  }{ \| \bepsilon \| }.
$$
Lemma \ref{lemma: self-normlz epsilon sub-G property} in the Appendix ensures that the entries of the self-normalized vector $ \bepsilon  /  \| \bepsilon \| $ are around $ n^{-1/2}$. This key probabilistic result then ensures 
\begin{equation}\label{eq::concentration-CLT}
\widehat{\sigma}^{-1} \cdot ( n^{-1/2} X\mt \bepsilon )
\approx n^{-1/2} \sum_{i=1}^n x_i,
\end{equation}
which is asymptotically normal by the standard central limit theorem with random $x_i$'s. 

\begin{remark}
\label{remark::single-cluster}
Consider the extreme case with a single cluster so that all the $\v_i$'s are correlated. The central limit theorem for $\widehat{\beta}$ breaks down and the cluster-robust covariance estimator degenerates to $0$ \citep{liang1986longitudinal}, making the corresponding inference useless. By contrast, the standard OLS inference based on $T_j$ can still be valid as long as Assumption \ref{assumption::regularity-conditions} holds. 
\end{remark}

 \begin{remark}\label{remark::2randomization}
The Berry--Esseen bound in Theorem \ref{Thm: subG coverage prob} relies crucially on the assumption of random regressors. 
 \cite{chetverikov2023standard} reported a  similar robustness property of the classic OLS inference. 
Our result is also related to the randomization-based inference with randomized treatment \citep{barrios2012clustering, lin2013agnostic,
abadie2023should}.
However, our theory is fundamentally different. 
Their theories deal with the regime of asymptotically normal estimators and consistent variance estimators, whereas our theory can deal with the regime in which the asymptotic normality of the OLS coefficient breaks down and our proof relies on the concentration properties of the self-normalized vector $ \bepsilon  /  \| \bepsilon \| $ as shown in Lemma  \ref{lemma: self-normlz epsilon sub-G property} in the Appendix. 
 \end{remark}

\section{Power analysis under a local alternative hypothesis}
\label{all of power}

Based on the asymptotic normality in Theorem \ref{Thm: subG coverage prob}, the one-sided test for the null hypothesis of $H_0: \beta_j = 0$ is $\mathcal{I}( L_j^{-1} \widehat{\beta}_j > z_{\alpha})$. We will further study the local power of this test under the alternative hypothesis of 
\begin{equation}
\label{eq::alternative}
H_1: \beta_j = h \left(  \frac{ \sigma^2 e_j^\T \Sigma^{-1} e_j  }{ n - d}  \right)^{1/2}  \text{ with } h>0.
\end{equation}
The choice of the alternative hypothesis $H_1$ in \eqref{eq::alternative} is motivated by the form of $L_j$ to simplify the form of the asymptotic power function, which will be clear in \eqref{eq::shift-term} below. 

\begin{theorem}
[power]
\label{theorem: power}
Under Assumption \ref{assumption::regularity-conditions} and $H_1$ in \eqref{eq::alternative}, 
we have
\begin{equation}
\big| \mathrm{pr} ( L_j^{-1}\widehat{\beta}_j > z_{\alpha} ) - \pi(h, V)\big| 
\lesssim  \lambda_{\min}^{-3/2}(V) \cdot   \max ( d,\log n ) \cdot   (\log n)^{1/2} \cdot   n^{-1/2} , 
\label{eqn: power difference general} 
\end{equation}
where the asymptotic power function equals
$
\pi ( h, V ) = E\{  \Phi (  h \delta^{-1/2} 
 - z_{\alpha} ) \} ,
$
with the expectation taken over $\delta =  \v^\T \v / (n \sigma^2) =  w^{\T} V w / n$. 
\end{theorem}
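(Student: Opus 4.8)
The plan is to reduce the power probability to the Berry--Esseen bound of Theorem~\ref{Thm: subG coverage prob} for the centered statistic, after isolating a random non-centrality shift and showing that it concentrates at a quantity determined by the errors alone. Write $T_j = L_j^{-1}(\widehat\beta_j - \beta_j)$ for the pivot and $S = L_j^{-1}\beta_j$ for the shift, so that
\begin{equation*}
\mathrm{pr}(L_j^{-1}\widehat\beta_j > z_\alpha) = \mathrm{pr}(T_j > z_\alpha - S).
\end{equation*}
Since $\widehat\beta_j - \beta_j = e_j^\T(X^\T X)^{-1}X^\T\bepsilon$ and $\widehat\sigma^2 = \bepsilon^\T(I - P_X)\bepsilon/(n-d)$ do not depend on $\beta$, the statistic $T_j$ has the same law under $H_1$ as under $H_0$, so Theorem~\ref{Thm: subG coverage prob} applies to it verbatim. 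The two remaining tasks are to control the random shift $S$ and to transfer the marginal normal approximation for $T_j$ into the event $\{T_j > z_\alpha - S\}$ even though $T_j$ and $S$ are both functions of the same design $X$.

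First I would make the shift concentration precise. Substituting the form of $H_1$ in \eqref{eq::alternative} and $L_j = \widehat\sigma\{e_j^\T(X^\T X)^{-1}e_j\}^{1/2}$ gives
\begin{equation*}
S = h\,\frac{\sigma}{\widehat\sigma}\cdot\frac{(e_j^\T\Sigma^{-1}e_j)^{1/2}}{(n-d)^{1/2}\{e_j^\T(X^\T X)^{-1}e_j\}^{1/2}}.
\end{equation*}
Writing $n^{-1}X^\T X = \Sigma^{1/2}(n^{-1}Z^\T Z)\Sigma^{1/2}$ and invoking sub-Gaussian concentration of $n^{-1}Z^\T Z$ around $I_d$, the quadratic-form ratio concentrates at $\{n/(n-d)\}^{1/2}\to 1$; and since $\widehat\sigma^2/\sigma^2 = (n-d)^{-1}\{n\delta - \sigma^{-2}\bepsilon^\T P_X\bepsilon\}$ with $\bepsilon^\T P_X\bepsilon = O(d\sigma^2)$, the factor $\sigma/\widehat\sigma$ concentrates at $\delta^{-1/2}$, where $\delta = \bepsilon^\T\bepsilon/(n\sigma^2) = w^\T V w/n$. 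Combining these yields a high-probability event $\mathcal E$ on which $|S - h\delta^{-1/2}| \le \epsilon_n$ for a suitable $\epsilon_n$, with $\mathrm{pr}(\mathcal E^c)$ negligible; the bound $\delta \gtrsim \lambda_{\min}(V)$ controls $\delta^{-1/2}$, while the logarithmic tail level needed to make $\mathrm{pr}(\mathcal E^c)$ small and the anti-concentration correction below account for the extra $(\log n)^{1/2}$ relative to the $\lambda_{\min}^{-3/2}(V)\max(d,\log n)n^{-1/2}$ bound inherited from Theorem~\ref{Thm: subG coverage prob}.

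Next I would decouple $T_j$ from $S$ by conditioning on $w$, which fixes $\bepsilon$ and hence $\delta$. The key structural fact, already underlying Theorem~\ref{Thm: subG coverage prob}, is that the conditional law of the leading term $X^\T\bepsilon/\|\bepsilon\|$ is approximately $N(0,\Sigma)$ regardless of $\bepsilon$, so the proof furnishes a conditional bound $\sup_t|\mathrm{pr}(T_j \le t \mid w) - \Phi(t)|$ that is small uniformly over a high-probability set of $w$. On $\mathcal E$ one has the sandwich
\begin{equation*}
\mathrm{pr}(T_j > z_\alpha - h\delta^{-1/2} + \epsilon_n\mid w) - \mathrm{pr}(\mathcal E^c\mid w) \le \mathrm{pr}(T_j > z_\alpha - S\mid w) \le \mathrm{pr}(T_j > z_\alpha - h\delta^{-1/2} - \epsilon_n\mid w) + \mathrm{pr}(\mathcal E^c\mid w),
\end{equation*}
to which I apply the conditional Berry--Esseen bound and then the Lipschitz property of $\Phi$ to absorb the $\pm\epsilon_n$ into an $O(\epsilon_n)$ error. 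This gives $\mathrm{pr}(T_j > z_\alpha - S\mid w) = \Phi(h\delta^{-1/2} - z_\alpha) + O(\cdot)$, and taking expectation over $w$ produces $E\{\Phi(h\delta^{-1/2} - z_\alpha)\} = \pi(h, V)$ with the stated remainder.

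I expect the decoupling to be the main obstacle: because $T_j$ and $S$ are both driven by the random design $X$, the marginal normal approximation for $T_j$ cannot be evaluated at the random threshold $z_\alpha - S$ directly. The resolution is that the design-dependence of $S$ washes out, so that $S$ concentrates at the purely $w$-measurable target $h\delta^{-1/2}$; conditioning on $w$ then renders the threshold effectively deterministic, and the conditional Berry--Esseen bound, whose Gaussian limit does not depend on $w$, can be applied at that fixed level and afterwards averaged over the errors.
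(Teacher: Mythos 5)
Your proposal is correct and follows essentially the same route as the paper: decompose into the pivot $T_j$ and the shift $L_j^{-1}\beta_j$, show the shift concentrates at $h\delta^{-1/2}$, condition on $w$ (restricted to a high-probability set on which $\delta\gtrsim\lambda_{\min}(V)$ and the entries of $\bepsilon/\|\bepsilon\|$ are $O(\sqrt{\log n/n})$) to get a conditional Berry--Esseen bound for the pivot, sandwich the random threshold, and average over $w$. The only detail you gloss over is that the Lipschitz step for $\Phi$ involves a perturbation proportional to $\eta_1 h/\sqrt{\delta}$, so the paper splits into $h\lesssim\sqrt{\log n}$ (where Lipschitz applies and yields the extra $(\log n)^{1/2}$ factor) and large $h$ (where both normal probabilities are within $O(n^{-1/2})$ of one), a routine case analysis your argument would also need.
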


In the classic regime with independent errors, $\delta \rightarrow 1$ in probability and the asymptotic power function reduces to $\Phi(  h - z_{\alpha} )$. 
In the regime with strongly correlated errors, $\delta$ converges to a random variable as shown in Lemma \ref{lemma: non stable for non standardized epsilon} in the Appendix.
Therefore, the asymptotic power function has the form of $\pi ( h, V )$ in Theorem \ref{theorem: power}. 

We provide some heuristics for the asymptotic power function. The statistic $L_j^{-1} \widehat{\beta}_j$ decomposes as $L_j^{-1} \widehat{\beta}_j = L_j^{-1} (\widehat{\beta}_j - \beta_j)  +  L_j^{-1} \beta_j$, where (a) the first term, $T_j = L_j^{-1} (\widehat{\beta}_j - \beta_j)$, is approximately $\mathcal{N}(0,1)$ by Theorem \ref{Thm: subG coverage prob}; (b) the second term is approximately \begin{eqnarray*}\label{eq::shift-term} L_j^{-1} \beta_j=h \left(  \frac{ \sigma^2 e_j^\T \Sigma^{-1} e_j  }{ n - d}  \right)^{1/2}  \Big / \left\{ \frac{\widehat{\sigma}^2 e_j^\T (n^{-1}X^\T X)^{-1} e_j}{n} \right\}^{1/2} \approx h \delta^{- 1/2},\end{eqnarray*} because $n^{-1}X^\T X \approx \Sigma$ and $\widehat{\sigma}^2 = \v^\T (I-P_X) \v/(n-d) \approx  \v^\T \v/(n-d)$; and (c) the first and second terms are asymptotically independent. We can use (a)--(c) to derive the asymptotic power function
\begin{equation}
\mathrm{pr} ( {L_j}^{-1} \widehat{\beta}_j > z_\alpha ) = \mathrm{pr} ( T_j > z_\alpha - {L_j}^{-1} \beta_j )  \approx  \Phi( h\delta^{-1/2} -  z_\alpha),
\label{eqn: power phase intuition}
\end{equation}
with random $\delta$. 
Therefore, the final asymptotic power function needs to take expectation over $\delta$, as stated in Theorem \ref{theorem: power}.

Although $\delta$ has mean 1, it can have large variability around 1 with strongly correlated errors as shown in Lemma \ref{lemma: non stable for non standardized epsilon} in the Appendix.
Compared with the power function for independent errors, $\Phi( h -  z_\alpha)$, the integrand for correlated errors, $\Phi( h\delta^{-1/2} -  z_\alpha)$, has a larger value if $\delta < 1$ and a smaller value if $\delta > 1$. 
Averaged over $\delta$, whether correlated errors benefit or harm power depends on the variability of $\delta$ relative to $h$. 
Overall, with small $h$, correlated errors benefit power, whereas with large $h$, they harm power. 
To gain insights into this phenomenon, we simplify the power function under normal errors with the exchangeable correlation structure $V_{1,\rho}$ below.

{}

\begin{corollary}[power function under exchangeable correlation structure]
Under Assumption \ref{assumption::regularity-conditions} and $H_1$ in \eqref{eq::alternative}, if $w \sim \mathcal{N}(0, I_n)$ and $V = V_{1,\rho}$ with $\rho \in [0,1-c_{\mathrm{min}}]$ for an absolute constant $c_{\mathrm{min}} \in (0,1)$,
then
$$
\big| \mathrm{pr}  (L_j^{-1}{\widehat{\beta}_j} > z_\alpha ) - \pi( h, \rho ) \big|  
\lesssim  ( \log n )^{1/2} \cdot  \max\left( d,\log n \right) \cdot  n^{-1/2} , 
$$
where
$
\pi( h, \rho) =
E\{ \Phi(h ( \rho \chi_1^2 + 1 - \rho)^{-1/2} - z_\alpha)   \},
$
with expectation taken over $\chi_1^2$.
%
\label{corollary: power pihat_G(h,rho)}
\end{corollary}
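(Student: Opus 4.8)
The plan is to derive the corollary as a specialization of Theorem~\ref{theorem: power} to the exchangeable structure $V = V_{1,\rho}$, simplifying both the error bound in \eqref{eqn: power difference general} and the limiting power function $\pi(h,V)$. The first step is to compute the spectrum of $V_{1,\rho} = (1-\rho) I_n + \rho 1_n 1_n^{\T}$. Since $1_n 1_n^{\T}$ has eigenvalues $n$ (with eigenvector $1_n$) and $0$ (with multiplicity $n-1$), the eigenvalues of $V_{1,\rho}$ are $1 + (n-1)\rho$ and $1-\rho$ (with multiplicity $n-1$). For $\rho \in [0, 1-c_{\min}]$ this gives $\lambda_{\min}(V_{1,\rho}) = 1 - \rho \geq c_{\min}$, an absolute constant, so $\lambda_{\min}^{-3/2}(V_{1,\rho}) \leq c_{\min}^{-3/2}$. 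Substituting this into \eqref{eqn: power difference general} yields
\[
\big| \mathrm{pr}(L_j^{-1}\widehat{\beta}_j > z_\alpha) - \pi(h, V_{1,\rho}) \big| \lesssim (\log n)^{1/2} \max(d, \log n)\, n^{-1/2},
\]
which is exactly the claimed rate. It then remains to replace $\pi(h, V_{1,\rho})$ by $\pi(h,\rho)$ at a cost of order $n^{-1/2}$, which is dominated by the display above.

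The second step is to identify the law of $\delta = w^{\T} V_{1,\rho} w / n$. Expanding, $w^{\T} V_{1,\rho} w = (1-\rho)\|w\|^2 + \rho (1_n^{\T} w)^2$, so $\delta = (1-\rho)\|w\|^2/n + \rho (1_n^{\T} w)^2/n$. Because $w \sim \mathcal{N}(0, I_n)$, the scalar $1_n^{\T} w/ n^{1/2} \sim \mathcal{N}(0,1)$ exactly; defining $\chi_1^2 := (1_n^{\T} w)^2/n$, this variable is exactly distributed as $\chi^2_1$. The key observation is the exact cancellation
\[
\delta - (1 - \rho + \rho \chi_1^2) = (1-\rho)\big( \|w\|^2/n - 1 \big),
\]
so the only discrepancy between $\delta$ and the integrand variable $\rho \chi_1^2 + 1 - \rho$ comes from the fluctuation of $\|w\|^2/n$ around $1$. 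Since $\|w\|^2 \sim \chi^2_n$ has variance $2n$, we get $E\big|\,\|w\|^2/n - 1\,\big| \leq (2/n)^{1/2}$, hence $E\big|\delta - (\rho\chi_1^2 + 1 - \rho)\big| \lesssim n^{-1/2}$.

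The third step transfers this $L^1$ closeness through the smooth map $g(x) = \Phi(h x^{-1/2} - z_\alpha)$. On the event $A = \{\|w\|^2/n \geq 1/2\}$ we have $\delta \geq c_{\min}\|w\|^2/n \geq c_{\min}/2$ and also $\rho \chi_1^2 + 1 - \rho \geq 1 - \rho \geq c_{\min}$, so both arguments lie in $[c_{\min}/2, \infty)$, where $|g'(x)| = \frac{h}{2} x^{-3/2} \phi(h x^{-1/2} - z_\alpha)$ is bounded by a constant depending only on $h$ and $c_{\min}$. Lipschitz continuity then gives $|g(\delta) - g(\rho\chi_1^2 + 1 - \rho)| \lesssim |\delta - (\rho\chi_1^2 + 1 - \rho)|$ on $A$, while on $A^c$ the difference is at most $1$ and $\mathrm{pr}(A^c) = \mathrm{pr}(\chi^2_n \leq n/2) \leq e^{-n/16}$ by the lower-tail bound for $\chi^2_n$. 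Taking expectations and using the $L^1$ bound from the second step gives $|\pi(h, V_{1,\rho}) - \pi(h,\rho)| \lesssim n^{-1/2} + e^{-n/16} \lesssim n^{-1/2}$, and the triangle inequality with the first display completes the argument.

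The hard part will be the transfer in the third step, because $g(x) = \Phi(h x^{-1/2} - z_\alpha)$ has a derivative that blows up as $x \to 0$; the Lipschitz bound is valid only after establishing that $\delta$ stays bounded away from $0$ with overwhelming probability. This is exactly where the restriction $\rho \leq 1 - c_{\min}$ (equivalently $\lambda_{\min}(V_{1,\rho}) \geq c_{\min}$) and the exponential lower tail of $\|w\|^2/n$ are indispensable. Everything else reduces to exact Gaussian algebra, which makes $(1_n^{\T} w)^2/n$ an exact $\chi^2_1$ and produces the clean cancellation that avoids any central-limit-type approximation for $\delta$.
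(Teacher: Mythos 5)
Your overall route is the same as the paper's: specialize Theorem \ref{theorem: power} using $\lambda_{\min}(V_{1,\rho})=1-\rho\geq c_{\min}$, and then show $|\pi(h,V_{1,\rho})-\pi(h,\rho)|\lesssim n^{-1/2}$ by observing that $\delta-(\rho\chi_1^2+1-\rho)=(1-\rho)(\|w\|^2/n-1)$. The paper proves this second step in greater generality (a block-diagonal correlation structure, Corollary \ref{corollary: power pihat_G(h, vecrho, vecr)} in the supplement, with the exchangeable case as $K=1$), using the rotation $Q^\T w\deq w$; your direct identification of $(1_n^\T w)^2/n$ as an exact $\chi_1^2$ is the same decomposition in the special case, just without the eigen-decomposition machinery. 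Your exact-Gaussian-algebra observation is correct, and the $L^1$ bound $E|\,\|w\|^2/n-1|\leq(2/n)^{1/2}$ together with the lower-tail truncation event $A$ is sound.

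The one genuine gap is the uniformity in $h$ of your third step. You assert that $|g'(x)|=\tfrac{h}{2}x^{-3/2}\phi(hx^{-1/2}-z_\alpha)$ is ``bounded by a constant depending only on $h$ and $c_{\min}$'' on $[c_{\min}/2,\infty)$. If that constant is allowed to grow with $h$ (the naive bound is $\tfrac{h}{2}(c_{\min}/2)^{-3/2}(2\pi)^{-1/2}$, which is linear in $h$), your final estimate becomes $C(h)\,n^{-1/2}$, which does not yield the stated bound with absolute constants for large $h$; the paper's notation explicitly requires absolute constants, and its proof spends an entire case analysis on exactly this point (for $h\gtrsim\log n$ it shows both power expressions are within $O(n^{-1/2})$ of $1$, and only for $h\lesssim\log n$ does it invoke the mean value theorem, absorbing the resulting factor of $h$ into the $(\log n)^{1/2}\max(d,\log n)$ budget). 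Your argument can be repaired without the case split: substituting $u=hx^{-1/2}$ gives $\sup_{x\geq c}|g'(x)|=\tfrac{1}{2h^2}\sup_{0<u\leq h/\sqrt{c}}u^3\phi(u-z_\alpha)$, which is bounded uniformly over $h>0$ by a constant depending only on $c$ and $z_\alpha$ because $u^3\phi(u-z_\alpha)$ is bounded on $(0,\infty)$ and is $O(u^3)$ near $0$. But as written, the step is not justified, and this is precisely the part of the proof the paper treats most carefully.
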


The asymptotic power function $\pi(h, \rho)$ in Corollary \ref{corollary: power pihat_G(h,rho)} is a special case of the  general $\pi(h, V)$ in Theorem \ref{theorem: power}, with $\delta$ replaced by its asymptotic distribution $\rho\chi^2_{1} + 1 - \rho $. Corollary \ref{corollary: power pihat_G(h,rho)} offers insights into the dependence of power on the correlation structure. 
Figure \ref{fig: power theoretical} highlights the region of $(h, \rho)$ with $\pi(h, \rho) - \pi(h, 0) > 0$ such that the $t$-test based on OLS is more powerful with correlated errors than with independent errors. It shows that with small $h$, correlated errors improve the power, whereas with large $h$, correlated errors harm the power.

\begin{figure}[h]
\centering
               \vspace{-0.5\baselineskip}
        \begin{subfigure}[h]{0.5\textwidth}
        \caption{}
                \includegraphics[width=0.85\linewidth]{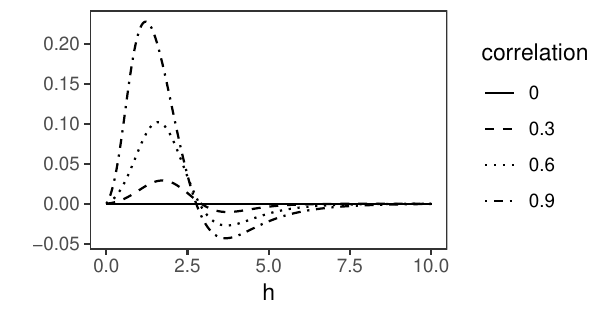}
          \label{fig: theo power 6 curves}  
        \end{subfigure}%
        \begin{subfigure}[h]{0.5\textwidth}
        \caption{}
                \includegraphics[width=0.9\linewidth]{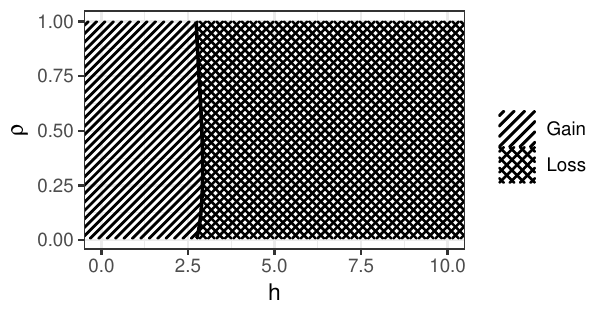}
                \label{fig: theo power phase transition}
       \end{subfigure}%
               \vspace{-0.25\baselineskip}
        \caption{(a) $\pi(h, \rho) - \pi(h,0)$ as a function of $h$, given different values of $\rho$. (b) Region with power gain such that $\pi(h, \rho) - \pi(h,0)>0$ and region with power loss such that $\pi(h, \rho) - \pi(h,0)<0$. }
         \label{fig: power theoretical}
                        \vspace{-0.75\baselineskip}
\end{figure}

\section{Fixed, random, or mixed regressors}
\label{discussion}

With random regressors, we have demonstrated the robustness of OLS inference with respect to correlated errors. With fixed regressors, the theory breaks down. We can construct a counterexample. For instance, if $y=\beta 1_n+\v$ where $\v \sim \mathcal{N}(0,  V_{1,\rho})$, then $\widehat{\beta} - \beta\sim \mathcal{N}(0, \rho + n\inverse(1-\rho))$ is bounded in probability, and $L_1 \approx \{ n^{-2} \v\mt \v \}^{1/2}$ converges to $0$ in probability. Therefore, wrongly assuming asymptotic normality of $L_1^{-1} (\widehat{\beta} - \beta)$ does not give valid inference.

When the regressors contain both fixed and random components, the OLS inference for the coefficients of the fixed components is not valid whereas that of the random components is still valid asymptotically. 
Consider model $y = X \beta + \bepsilon$ with $n=100$, $\beta = [1,1,1]^\T$, $X = [X_1, X_2, X_3] \in \mathbb{R}^{100 \times 3}$ where $X_1 = 1_n$ is the intercept. 
Both $X_2$ and $X_3$ have independent and identically distributed Rademacher entries.
Yet $X_2$ is fixed across replications and $X_3$ is regenerated for each replication.
The errors satisfy $\bepsilon \sim \mathcal{N}(0, V)$,  where $V_{ij} =\rho^{|i-j|}$ with $\rho$ varying from $-0.9$ to $0.9$. 
Figure \ref{fig: mixed regressor d=3} demonstrates the non-coverage probabilities of the confidence interval $\widehat{\beta}_j \pm 1.96 L_j$ and the densities of $T_j$. The OLS inference for the coefficient of the random regressor $X_3$ is valid because $T_3$ is close to $\mathcal{N}(0,1)$. By contrast, the OLS inference for other coefficients is not valid.

\begin{figure}[h]
\centering
               \vspace{-0.5\baselineskip}
        \begin{subfigure}[h]{0.4\textwidth}
        \caption{}
                \includegraphics[scale=0.5]{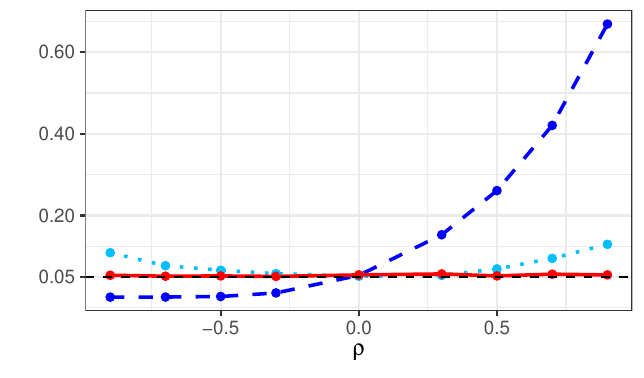} 
        \end{subfigure}%
        \begin{subfigure}[h]{0.6\textwidth}
        \caption{}
                \includegraphics[scale=0.5]{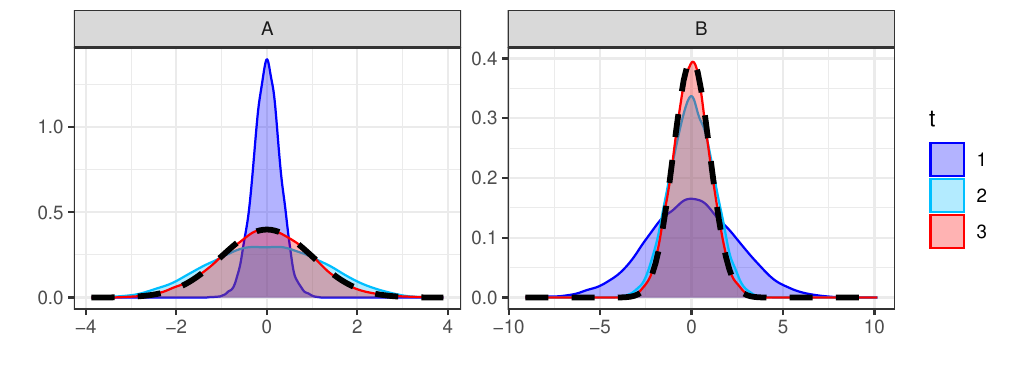}
       \end{subfigure}%
               \vspace{-0.25\baselineskip}
        \caption{(a) Non-coverage probabilities of $\beta_1$ (dashed), $\beta_2$ (dotted), and $\beta_3$ (solid). (b) Empirical densities of  $T_j$ for $j=1,2,3$ when $\rho = -0.9$ (Panel A) and 0.7 (Panel B). The black dashed curves are the $\mathcal{N}(0, 1)$ density.}
         \label{fig: mixed regressor d=3}
                        \vspace{-0.75\baselineskip}
\end{figure}


\medskip

\appendix

 \medskip

\section*{Appendix}

\renewcommand\thesection{\Alph{section}}
\renewcommand\thesubsection{\thesection.\arabic{subsection}}
 
\setcounter{lemma}{0}
 \makeatletter
\renewcommand{\thelemma}{A.\@arabic\c@lemma}
\makeatother

Lemmas \ref{lemma: self-normlz epsilon sub-G property} and \ref{lemma: non stable for non standardized epsilon} below characterize  $\bepsilon / \| \bepsilon \|_2$ and $\delta =  \bepsilon\mt \bepsilon / (n\sigma^2 ) = w\mt V w / n$, respectively.

\begin{lemma}
Assume Assumption \ref{assumption::regularity-conditions} (i), (iv), and  $\mathrm{pr}(\bepsilon = 0) = 0$. 
Define $v = \bepsilon / \| \bepsilon \|_2$ with entries $v_i$, $i=1,\ldots,n$. 
Then for any $t > 0$,  we have
$$ \mathrm{pr}( n^{1/2} |v_i| \geq t) \leq 4 \exp \{ -c \lambda_{\min}(V) t^2 \}, $$
where $c$ is an absolute constant as a function of $K_w$, $V_{\mathrm{lo}}$, and  $V_{\mathrm{up}}$.
\label{lemma: self-normlz epsilon sub-G property}
\end{lemma}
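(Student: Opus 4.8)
The plan is to reduce the claim to a ratio of a sub-Gaussian linear form over a quadratic form in the white noise $w$, and then bound numerator and denominator separately. Writing $\bepsilon = \sigma V^{1/2} w$, the $i$th entry of the self-normalized vector becomes
\[
v_i = \frac{\bepsilon_i}{\|\bepsilon\|_2} = \frac{e_i^{\T} V^{1/2} w}{(w^{\T} V w)^{1/2}},
\]
which is well defined almost surely since $\mathrm{pr}(\bepsilon = 0) = 0$ and $V$ is positive definite. The factor $\sigma$ cancels, consistent with the scale-free form of the bound.

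For the numerator, $e_i^{\T}V^{1/2}w = \sum_j (V^{1/2}e_i)_j w_j$ is a linear combination of independent mean-zero sub-Gaussian variables, hence sub-Gaussian with $\|e_i^{\T}V^{1/2}w\|_{\psi_2} \le C K_w \|V^{1/2} e_i\|_2 = C K_w V_{ii}^{1/2}$. Using $V_{ii} = \mathrm{var}(\bepsilon_i)/\sigma^2$ together with Assumption \ref{assumption::regularity-conditions}(iv) and $\sigma^2 \in [V_{\mathrm{lo}}, V_{\mathrm{up}}]$ gives $V_{ii} \le V_{\mathrm{up}}/V_{\mathrm{lo}}$, so the numerator has $\psi_2$-norm bounded by a constant depending only on $K_w, V_{\mathrm{lo}}, V_{\mathrm{up}}$, and its tail is $\mathrm{pr}(|e_i^{\T}V^{1/2}w| \ge s) \le 2\exp(-c_1 s^2)$.

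For the denominator the essential point is a lower bound that is robust to the correlation structure. I would use the deterministic inequality $w^{\T}V w \ge \lambda_{\min}(V)\|w\|_2^2$, followed by a Bernstein bound for the sub-exponential sum $\|w\|_2^2 = \sum_i w_i^2$ (with mean $n$) to obtain $\mathrm{pr}(\|w\|_2^2 \le n/2) \le \exp(-c_2 n)$. On the good event $G = \{\|w\|_2^2 \ge n/2\}$ we have $w^{\T}Vw \ge \lambda_{\min}(V)n/2$, so $n^{1/2}|v_i| \le \{2/\lambda_{\min}(V)\}^{1/2}\,|e_i^{\T}V^{1/2}w|$, and a union bound yields
\[
\mathrm{pr}(n^{1/2}|v_i| \ge t) \le \mathrm{pr}(G^c) + \mathrm{pr}\big(|e_i^{\T}V^{1/2}w| \ge t\{\lambda_{\min}(V)/2\}^{1/2}\big) \le \exp(-c_2 n) + 2\exp(-c_3 \lambda_{\min}(V) t^2).
\]
To absorb the first term I note $|v_i|\le 1$ forces the probability to vanish for $t > n^{1/2}$, while for $t \le n^{1/2}$ the relation $\lambda_{\min}(V) \le 1$ (from $\mathrm{tr}(V)=n$) gives $\lambda_{\min}(V)t^2 \le n$, hence $\exp(-c_2 n) \le \exp(-c_2\lambda_{\min}(V)t^2)$. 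Collecting constants produces the claimed $4\exp\{-c\lambda_{\min}(V)t^2\}$.

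The main obstacle is exactly this denominator bound. It is tempting to estimate $w^{\T}Vw$ directly by a Hanson--Wright concentration around its mean $\mathrm{tr}(V) = n$, but the resulting lower-tail bound scales like $\exp\{-cn/\lambda_{\max}(V)\}$, which is vacuous when the errors are strongly correlated and $\lambda_{\max}(V)$ is of order $n$. Replacing it with the deterministic bound $w^{\T}Vw \ge \lambda_{\min}(V)\|w\|_2^2$ costs a factor of $\lambda_{\min}(V)$ but is completely insensitive to the correlation structure, which is precisely why $\lambda_{\min}(V)$---rather than $\lambda_{\max}(V)$---appears in the exponent and why this is the correct route for the strong-correlation regime the paper targets.
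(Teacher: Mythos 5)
Your proposal is correct and follows essentially the same route as the paper's proof: the deterministic bound $w^{\T}Vw \ge \lambda_{\min}(V)\|w\|_2^2$, a Bernstein lower-tail bound for $\|w\|_2^2$, a sub-Gaussian tail for the linear form $e_i^{\T}V^{1/2}w$ with $\psi_2$-norm controlled via $V_{ii}\le V_{\mathrm{up}}/V_{\mathrm{lo}}$, and the case split on $t^2$ versus $n$ using $|v_i|\le 1$ and $\lambda_{\min}(V)\le 1$ to merge the two exponentials. The only differences are immaterial choices of thresholds and constants.
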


Lemma \ref{lemma: self-normlz epsilon sub-G property} extends \citet[][Theorem 3.4.6]{vershynin2018high} for the uniform distribution over the sphere with radius $n^{1/2}$. 
We adopt a similar proving technique but establish a stronger result for a general vector $\v$ with correlation structure $V$.
If we further 
assume  $\lambda_{\min}(V) \geq c_{\mathrm{min}}$ for some absolute constant $c_{\mathrm{min}} > 0$, 
then  
Lemma \ref{lemma: self-normlz epsilon sub-G property} ensures that the ratio between $|v_i|$ and $n^{-1/2}$ has a sub-Gaussian tail, with $\|v_i\|_{\psi_2} \lesssim  ( n c_{\min} )^{-1/2}$, which further ensures that the $v_i$'s behave like $n^{-1/2}$. 
This is a key property to prove Theorems \ref{Thm: subG coverage prob} and \ref{theorem: power}.

\begin{lemma}
Under Assumption \ref{assumption::regularity-conditions} (i), suppose that $\lambda_{\max}(V) \asymp n^\iota$ where $\iota \in [0,1]$.
(a) If $\iota \in [0,1)$, then $\delta \rightarrow 1$ in probability. 
(b) If $\iota=1$, suppose that $\lambda_i(V)/n \to \alpha_i \in (0,1]$, $i=1,\ldots,K$, $\sum_{i=1}^K \alpha_i  \in (0,1]$, and $\lambda_i(V)/n \to 0$ for $i=K+1,\ldots,n$ with fixed $K$.
Under Assumption \ref{assumption::regularity-conditions} (iv), we have 
$
\delta  \rightarrow \sum_{i=1}^K \alpha_i Z_i^2 + (1-\sum_{i=1}^K  \alpha_i)
$
in distribution, 
where $Z_1,\ldots,Z_K$ are independent and identically distributed $\mathcal{N}(0,1)$.
\label{lemma: non stable for non standardized epsilon}
\end{lemma}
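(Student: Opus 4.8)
The plan is to diagonalize $V = \sum_{i=1}^n \lambda_i(V)\, u_i u_i\mt$ with orthonormal eigenvectors $u_i$ and rewrite $\delta = n^{-1} w\mt V w = n^{-1}\sum_{i=1}^n \lambda_i(V)\, g_i^2$, where $g_i = u_i\mt w$. Since $w$ has independent coordinates with zero mean and unit variance, $\e(g_i^2)=u_i\mt u_i = 1$, so $\e(\delta) = n^{-1}\tr(V) = 1$ throughout. The two parts are then a concentration argument and a central-limit argument, respectively.

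For part (a) I would control $\delta$ through its variance. Using the standard second-moment formula for a quadratic form with independent, mean-zero, unit-variance coordinates, $\mathrm{var}(w\mt V w) = \sum_i (\mu_{4,i}-3)V_{ii}^2 + 2\tr(V^2)$, where $\mu_{4,i} = \e(w_i^4)$ is bounded uniformly by a constant depending only on $K_w$ because the $w_i$ are sub-Gaussian. Since $V_{ii}=\sum_k \lambda_k(V) u_{ki}^2 \le \lambda_{\max}(V)$ and $\tr(V^2)=\sum_i\lambda_i^2(V)\le \lambda_{\max}(V)\,\tr(V)$, both contributions are $\lesssim \lambda_{\max}(V)\,\tr(V)=\lambda_{\max}(V)\,n \asymp n^{1+\iota}$. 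Hence $\mathrm{var}(\delta)\lesssim n^{-2} n^{1+\iota}=n^{\iota-1}\to 0$ for $\iota\in[0,1)$, and Chebyshev's inequality gives $\delta\to 1$ in probability.

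For part (b) I would split $\delta = S_K + R_K$ with $S_K = \sum_{i=1}^K (\lambda_i(V)/n)\, g_i^2$ and $R_K = n^{-1}\sum_{i=K+1}^n \lambda_i(V)\, g_i^2$. The tail $R_K$ is a quadratic form in $V_{\mathrm{tail}}=\sum_{i>K}\lambda_i(V) u_i u_i\mt$, whose largest eigenvalue is $\lambda_{K+1}(V)=o(n)$; the computation of part (a) then gives $\mathrm{var}(R_K)\lesssim n^{-2}\lambda_{K+1}(V)\,\tr(V_{\mathrm{tail}})\le \lambda_{K+1}(V)/n\to 0$, while $\e(R_K)=1-\sum_{i=1}^K\lambda_i(V)/n \to 1-\sum_{i=1}^K\alpha_i$, so $R_K$ converges to the constant $1-\sum_{i=1}^K\alpha_i$ in probability. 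For the leading block I would prove the joint convergence $(g_1,\dots,g_K)\to(Z_1,\dots,Z_K)$, independent $\mathcal N(0,1)$, via the Cramér--Wold device: any combination $\sum_{i=1}^K c_i g_i = \sum_{j=1}^n a_j w_j$ has $a_j=\sum_i c_i u_{ij}$ with $\sum_j a_j^2=\sum_i c_i^2$ by orthonormality, so the Lindeberg--Feller central limit theorem yields the limit $\mathcal N(0,\sum_i c_i^2)$ as soon as $\max_j |a_j|\to 0$. Then $S_K\to\sum_{i=1}^K\alpha_i Z_i^2$ by the continuous mapping theorem (using $\lambda_i(V)/n\to\alpha_i$), and Slutsky's theorem combines this with the limit of $R_K$ to give the stated distribution.

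The main obstacle is establishing the delocalization $\max_{i\le K,\,j}|u_{ij}|\to 0$ that powers the Lindeberg condition, and this is precisely where Assumption \ref{assumption::regularity-conditions}(iv) enters. I would argue that (iv) forces the diagonal of $V$ to be bounded: since $\sigma^2\in[V_{\mathrm{lo}},V_{\mathrm{up}}]$ and $\mathrm{var}(\bepsilon_i)=\sigma^2 V_{ii}$, we get $V_{ii}\le V_{\mathrm{up}}/V_{\mathrm{lo}}$ uniformly. Because $V\succ 0$, the inequality $V_{jj}=\sum_k\lambda_k(V) u_{kj}^2\ge \lambda_i(V)\, u_{ij}^2$ then yields $u_{ij}^2\le V_{jj}/\lambda_i(V) \lesssim n^{-1}$ for every $i\le K$ (using $\lambda_i(V)\asymp n$). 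This uniform bound delivers both $\max_j|u_{ij}|\to 0$ and $\max_j|a_j|\le (\sum_i|c_i|)\max_{i,j}|u_{ij}|\to 0$, closing the central limit theorem. A secondary subtlety, handled in passing, is that the $g_i$ are merely uncorrelated rather than independent for non-Gaussian $w$, so the independent Gaussian limit must be obtained through Cramér--Wold and the Lindeberg--Feller theorem rather than from coordinatewise independence.
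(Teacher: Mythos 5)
Your proposal is correct, and its overall architecture for part (b) coincides with the paper's: diagonalize $V$, split $\delta$ into the top-$K$ spectral block and the tail, show the tail concentrates at $1-\sum_{i=1}^K\alpha_i$, prove joint asymptotic normality of the top-$K$ projections via the Cram\'er--Wold device, and finish with continuous mapping and Slutsky. The delocalization bound $u_{ij}^2 \le V_{jj}/\lambda_i(V)$, with $V_{jj}$ controlled by Assumption (iv), is exactly the paper's key estimate (the paper reaches it through a Cauchy--Schwarz step $|q_{ij}|\le [\lambda_j^{-1}(V)V_{ii}]^{1/2}$; your direct reading of $V_{jj}=\sum_k\lambda_k(V)u_{kj}^2\ge\lambda_i(V)u_{ij}^2$ is cleaner). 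Where you genuinely differ is in the concentration tool: the paper invokes the Hanson--Wright inequality for both part (a) and the tail block of part (b), obtaining exponential tail bounds, whereas you use the exact second-moment formula for quadratic forms plus Chebyshev, which is more elementary and fully sufficient for the in-probability conclusions of this lemma (the exponential rates matter elsewhere in the paper, e.g.\ in Lemma \ref{lemma: E_2 of w}, but not here). You also substitute Lindeberg--Feller for the paper's Lyapounov condition; these are essentially interchangeable given the uniform sub-Gaussian bound on the $w_i$, though you should note that $\max_j|a_j|\to 0$ closes the Lindeberg condition only because $\sup_j\e[w_j^2\,\mathcal{I}(|w_j|>M)]\to 0$ uniformly as $M\to\infty$, which the uniform $\psi_2$ bound supplies. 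No gaps.
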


\bibliographystyle{chicago}

\clearpage

\section*{\Large Supplementary Files for ``With random regressors, least squares inference is robust to correlated errors with unknown correlation structure"
}

\setcounter{page}{1}
\renewcommand\thepage{S\arabic{page}}

\vspace{1cm}
Section \ref{sec: lemmas} proves the two lemmas in the Appendix of the main paper and documents technical lemmas used throughout this supplementary file.
Section \ref{sec: pf of all subG coverage prob} presents the proof of the Berry--Esseen bound in Theorem \ref{Thm: subG coverage prob}.
Section \ref{pf of all of power} provides the proofs for all the results about power, with Section \ref{subsec: pf of power approximation} for Theorem \ref{theorem: power}, Section \ref{subsec: pf of power pihat_G(h, rho)} for Corollary \ref{corollary: power pihat_G(h,rho)}, and Section \ref{subsec: pf of Dhat(h, rho) property} for some properties of $\pi (h, \rho) - \pi (h, 0)$ summarized in Lemma \ref{lemma: properties of D pi}, respectively.

\renewcommand\thesection{\Alph{section}}
\renewcommand\thesubsection{\thesection.\arabic{subsection}}
\renewcommand\thefigure{S.\arabic{figure}}
\renewcommand\thetable{S.\arabic{table}}

\setcounter{equation}{0}
 \makeatletter
 \renewcommand{\theequation}{S.\@arabic\c@equation}
\makeatother

\setcounter{lemma}{0}
 \makeatletter
\renewcommand{\thelemma}{S.\@arabic\c@lemma}
\makeatother

{\bf Notation.}   
For sequences $\{a_n\}$ and $\{b_n\}$, we write $a_n \asymp b_n$ if there are constants $m$, $M$ and $N$ such that $0<m<|a_n/b_n|<M<\infty$ for all $n>N$. 
We write $a_n \lesssim b_n$ and $a_n \gtrsim b_n$ if there exists a positive integer $N$ such that for all $n > N$, we have $a_n\leq C_1 b_n$ and $a_n\geq C_2 b_n$ for some absolute constants $C_1$ and $C_2$, respectively. 
We write $a_n = o(b_n)$ if $\lim_{n \to \infty} a_n / b_n = 0$.
We write $a_n = o(1)$ if $\lim_{n \to \infty} a_n = 0$.
For a vector $\ba$, let $\Vert\ba\Vert_1$ and $\Vert\ba\Vert_2$ denote the $\mathcal{L}_1$ and $\mathcal{L}_2$ norms, respectively. 
Let $\bone_n \in \mathbb{R}^n$ denote the vector whose entries are all $1$.
Let $\diag\{a_1,\dots,a_n\}$ denote the diagonal matrix in $\mathbb{R}^{n \times n}$ with diagonal entries $a_1,\dots, a_n$.
Given an $n\times d$ matrix $\bM$, let $s_{\min}(\bM)$ and $s_{\max}(\bM)$ denote the minimum and maximum nonzero singular values of $\bM$, respectively. 
Let $\|\bM\|$ and $\|\bM\|_{\textup{F}}$ denote the operator norm and the Frobenius norm of $\bM$, respectively. 
Let $\bP_{\bM}$ denote the projection matrix of the column space of $\bM$, that is, $\bP_{\bM} = \bM (\bM\mt \bM) \inverse \bM\mt$ if $\bM\mt\bM$ is nonsingular. 
For a square matrix $\bM$, let $\mathrm{det}(\bM)$ denote the determinant of $\bM$ 
and let $\diag(\bM) \in \mathbb{R}^{d \times d}$ denote a diagonal matrix with diagonal entries from $\bM$.
Let $\mathrm{diag}\{ \bM_1,\dots,\bM_K \}$ denote the block diagonal matrix with diagonal blocks $\bM_1,\dots, \bM_K$.
For any symmetric matrix $\bM \in \mathbb{R}^{d \times d}$, 
let $\lambda_{\min}(\bM)$ and $\lambda_{\max}(\bM)$ denote the minimum and maximum eigenvalues of $\bM$, respectively;
let $\lambda_i(\bM)$ denote the $i$th largest eigenvalue of $\bM$ for $i=1,\dots,d$.
Let $\bI_d$ denote the $d\times d$ identity matrix. 
Let $\tr(\bM)=\sum_{i=1}^d M_{ii}$ denote the trace of $\bM$. 
Let $\bfe_i \in \mathbb{R}^{d}$ denote the vector with a $1$ in the $i$th position and  zeros elsewhere.
Let $\deq$ denote equality in distribution. 
For random vectors $\{\bX_n\}_{n=1}^{\infty}$, $\bX$, and $\bY$, let $\bX_n \overset{L}{\longrightarrow} \bX$ and $\bX_n \overset{P}{\longrightarrow} \bX$ denote the convergence in law and convergence in probability, respectively.
Let $\bX \Perp \bY$ denote that $\bX$ is independent of $\bY$. 
Let $\mathcal{N}(\bmu, \bSigma)$ denote the multivariate Gaussian distribution with mean $\bmu$ and covariance matrix $\bSigma$.
Let $\sim$ denote following certain distribution, for example, $\bX \sim \mathcal{N}(\bmu, \bSigma)$.
For a random variable $X$, define $\| X \|_{\psi_p} = \inf\{t > 0:\ \e [\exp(|X|^p / t^p)] \leq 2\}$, where $p=1$ represents the sub-Exponential norm and $p=2$ represents the sub-Gaussian norm.
We use $c, C$, and $\widetilde{c}$ to denote positive and generic absolute constants. 
With slight abuse of notation, $\mathcal{E}_1$, $\mathcal{E}_2$, and $\mathcal{E}_{\eta}$, referring to different events, have different meanings across different sections.

\setcounter{equation}{0}
 \makeatletter
 \renewcommand{\theequation}{A.\@arabic\c@equation}
\makeatother

\setcounter{lemma}{0}
 \makeatletter
\renewcommand{\thelemma}{A.\@arabic\c@lemma}
\makeatother

\section{Proofs of Lemmas \ref{lemma: self-normlz epsilon sub-G property} and \ref{lemma: non stable for non standardized epsilon} }
\label{sec: lemmas}

\subsection{Proof of Lemma \ref{lemma: self-normlz epsilon sub-G property} }

The proof of Lemma \ref{lemma: self-normlz epsilon sub-G property} uses the idea similar to that of Theorem 3.4.6 in \cite{vershynin2018high}.
Let $v_i = \bfe_i \mt \bv$, where $\bfe_i\in \mathbb{R}^n$ is the $i$th canonical basis of $\mathbb{R}^n$. Hence 
\beq
\P \lbk \sqrt{n}\labs \bfe_i\mt\bv \rabs \geq t \rbk 
&=& \P \lbk \frac{\labs \bfe_i\mt \bV^{1/2} \bw \rabs}{\sqrt{\bw\mt \bV \bw}} \geq \frac{t}{\sqrt{n}} \rbk \\ & \leq & \P \lbk \frac{\labs \bfe_i\mt \bV^{1/2} \bw \rabs}{\sqrt{\lambda_{\min}(\bV) \bw\mt \bw}} \geq \frac{t}{\sqrt{n}} \rbk\\
&\leq & \P \lbk \sqrt{\bw\mt\bw} < \frac{\sqrt{n}}{2} \rbk + \P \lbk \sqrt{\bw\mt\bw} \geq \frac{\sqrt{n}}{2}, \frac{\labs \bfe_i\mt \bV^{1/2} \bw \rabs}{\sqrt{\bw\mt\bw}} \geq \frac{t \sqrt{\lambda_{\min}(\bV)}}{\sqrt{n}} \rbk\\
&\leq & \P \lbk \bw\mt \bw < \frac{n}{4} \rbk + \P \lbk \labs \bfe_i\mt \bV^{1/2} \bw \rabs \geq \frac{\sqrt{\lambda_{\min}(\bV)}t}{2} \rbk.
\eeq 
By the Bernstein's inequality in Lemma \ref{lemma: Bernstein's inequality} below, we have
\begin{equation}
\P \lbk \bw\mt \bw < \frac{n}{4} \rbk = \P \lbk {\bw\mt \bw} - n < -\frac{3}{4} n \rbk \leq 2 \exp \lsk -c n \rsk \leq \exp[-c \lambda_{\min}(\bV) n],
\label{eqn: lemma of self-nmlz epsilon wtw tail bd}    
\end{equation}
where the last step in \eqref{eqn: lemma of self-nmlz epsilon wtw tail bd} follows from $\lambda_{\min}(\bV) \leq 1$.
By Assumption \ref{assumption::regularity-conditions} (iv), we have $\Viilow \leq \sigma^2 \leq \Viiup$ so that $\bfe_i\mt \bV \bfe_i = \var (\epsilon_i) / \sigma^2 \leq \Viiup / \Viilow$.
Then by Lemma \ref{lemma: sum of indep sub-Gaussians} and Assumption \ref{assumption::regularity-conditions} (i), we have $\| \bfe_i\mt \bV^{1/2} \bw \|_{\psi_2}^2 \leq C K_w^2 \bfe_i\mt \bV \bfe_i \leq C K_w^2 \Viiup / \Viilow$ for an absolute constant $C$.
Thus we have 
\begin{equation}
\P \lbk \labs \bfe_i\mt \bV^{1/2} \bw \rabs \geq \frac{\sqrt{\lambda_{\min}(\bV)}t}{2} \rbk \leq 2 \exp \lmk -c \lambda_{\min}(\bV) t^2 \rmk,
\label{eqn: lemma of self-nmlz epsilon eit V^1/2 w bd}    
\end{equation}
where $c$ in \eqref{eqn: lemma of self-nmlz epsilon eit V^1/2 w bd} absorbs the constants $\Viiup$, $\Viilow$, and $K_w$.
Comparing $\exp[-c \lambda_{\min}(\bV) n]$ in \eqref{eqn: lemma of self-nmlz epsilon wtw tail bd} and $\exp[-c \lambda_{\min}(\bV) t^2]$ in \eqref{eqn: lemma of self-nmlz epsilon eit V^1/2 w bd}, we consider two cases:
\begin{enumerate}
    \item If ${t^2} \leq {n}$, then 
$\exp[-c \lambda_{\min}(\bV) n] \leq \exp[-c \lambda_{\min}(\bV) t^2]$
and therefore, 
\beq
\P \lbk \sqrt{n} \labs \bfe_i\mt\bv \rabs \geq t \rbk \leq 4 \exp[-c \lambda_{\min}(\bV) t^2 ].
\eeq
\item If ${t^2} > {n}$, then $t/\sqrt{n} > 1$. 
So we have $\p \{ \sqrt{n} |\bfe_i\mt \bv| \geq t \} \leq \p\{ |\bfe_i\mt \bv| > 1 \}$.
However, we always have
\beq
| \bfe_i\mt\bV^{1/2}\bw | \leq \sqrt{\bfe_i\mt\bfe_i}\sqrt{\bw\mt \bV \bw}= \sqrt{\bw\mt \bV \bw},
\eeq
which implies that $| \bfe_i\mt\bv | \leq 1$.
Hence we have
$\P \{ \sqrt{n} | \bfe_i\mt\bv | \geq t \} = 0$, which is still bounded by $ 4 \exp[-c \lambda_{\min}(\bV) t^2]$. 
\end{enumerate}

Combining cases 1 and 2 above, we obtain the desired result.

\subsection{Proof of Lemma \ref{lemma: non stable for non standardized epsilon}}

The proof of Lemma \ref{lemma: non stable for non standardized epsilon} (a) is derived from the standard Hanson--Wright inequality (Theorem 6.2.1 in \cite{vershynin2018high}). For Lemma \ref{lemma: non stable for non standardized epsilon} (b), the proof follows from the asymptotic theory results reviewed in Section \ref{subsubsec: lemmas for the asymp theory}.

\paragraph{Part (a)}  
For any $r\geq 0$, we have
\begin{eqnarray}
   & & \P \lbk \labs \delta - 1 \rabs \geq \max(r,r^2) \rbk \nonumber \\  
 &=&  \P \lbk \labs \bw\mt\bV\bw - \mathbb{E}\lsk \bw\mt\bV\bw \rsk \rabs \geq \max(r,r^2)n \rbk 
\label{eqn: typical Hanson-Wright step i}
\\
& {\leq} & 2 \exp \lbk -c \min \lmk \frac{ {\max}^2 \lsk r^2,r \rsk n^2}{\tr(\bV^2)}, \frac{\max \lsk r^2,r \rsk n}{\lambda_{\max}\lsk \bV \rsk} \rmk \rbk 
\label{eqn: typical Hanson-Wright step ii}
\\
&\leq & 2 \exp \lbk - c \min \lmk {\max}^2  \lsk r^2,r \rsk, \max\lsk r^2,r \rsk \rmk \min \lmk \frac{n^2}{\tr(\bV^2)}, \frac{n}{\lambda_{\max}(\bV)} \rmk \rbk \nonumber \\
& {\leq} &  2 \exp \lmk - \frac{ c n r^2 }{\lambda_{\max}(\bV)} \rmk,
\label{eqn: typical Hanson-Wright step iii} 
\end{eqnarray}
where  \eqref{eqn: typical Hanson-Wright step i} follows from $\bepsilon = \sigma \bV^{1/2} \bw$ in Assumption \ref{assumption::regularity-conditions} (i) and  $\tr(\bV)=n$, 
\eqref{eqn: typical Hanson-Wright step ii} follows from the Hanson--Wright inequality in Lemma \ref{lemma: Hanson-Wright inequality} below, with $c$ only depending on $K_w$,  
and \eqref{eqn: typical Hanson-Wright step iii} follows from the fact that 
for any $r\geq 0$, $\min [ \max^2 ( r^2,r ), \max(r^2,r) ] = r^2$ and  $\tr(\bV^2)\leq 
n\lambda_{\max}(\bV)$, so that
$
n^2 / \tr(\bV^2) 
\geq 
n^2 /[ n \lambda_{\max}(\bV) ] 
= 
n / \lambda_{\max}(\bV)$.

By \eqref{eqn: typical Hanson-Wright step iii}, if $\lambda_{\max}(\bV) \asymp n^{\iota}$ with $\iota \in [0,1)$, we have $\delta \overset{P}{\longrightarrow} 1$.

\paragraph{Part (b)} 
Let $\bV=\bQ\bLambda\bQ\mt$, where $\bQ=[\bq_1 ~ \ldots ~ \bq_K ~ \bq_{K+1} ~ \ldots ~ \bq_n]$ with $\bq_i \in \mathbb{R}^n$ denoting the eigenvector corresponding to $\lambda_i(\bV)$ for $i=1,\dots,n$, and $\bLambda=\mathrm{diag}\{ \lambda_1(\bV),\ldots,$ $ \lambda_K(\bV), $ $\lambda_{K+1}(\bV), \ldots, \lambda_n(\bV) \}$. Then we have $\bV=\bV_K + \bV_{-K}$ where
\begin{equation}
    \bV_K=\sum_{i=1}^K \lambda_i(\bV) \bq_i\bq_i^{\T},\ \bV_{-K}=\sum_{j=K+1}^n \lambda_j(\bV)\bq_j\bq_j^{\T}.
    \label{eqn: V_K and V_-K}
\end{equation}
Thus, $\bw\mt\bV\bw=\bw\mt\bV_K\bw+\bw\mt\bV_{-K}\bw$. 

First, we consider $n\inverse ( \bw\mt \bV_{-K} \bw )$. For any $r\geq  0$, \allowdisplaybreaks
\begin{eqnarray}
& &\p \lbk \labs n\inverse ( \bw\mt \bV_{-K} \bw ) - \lsk  1-\sum_{i=1}^K \alpha_i \rsk \rabs \geq r \rbk
\nonumber\\
&\leq & 
\p \lbk \labs n\inverse ( \bw\mt\bV_{-K}\bw )  -n\inverse \e ( \bw\mt\bV_{-K}\bw  ) \rabs + 
\labs n\inverse \e ( \bw\mt\bV_{-K}\bw ) - \lsk  1 - \sum_{i=1}^K \alpha_i \rsk \rabs \geq r \rbk \nonumber\\
& = & \p \{ | \bw\mt\bV_{-K}\bw - \e ( \bw\mt \bV_{-K} \bw ) | \geq n  [ r- o(1) ]  \} 
\label{eqn: pf of wt V_-K w concentration used twice step i}
\\
& \leq & 2\exp \lsk \frac{- c n}{\lambda_{K+1}(\bV)} \min \lbk \lmk r- o(1)\rmk^2, \lmk r- o(1)\rmk \rbk \rsk,
\label{eqn: pf of wt V_-K w concentration used twice step ii}
\end{eqnarray}
where \eqref{eqn: pf of wt V_-K w concentration used twice step i} follows from $\tr(\bV) = n$ and
\beq
\labs \frac{\e  (\bw\mt\bV_{-K}\bw )}{n} - \lsk 1 - \sum_{i=1}^K \alpha_i \rsk \rabs
&=& 
\labs \frac{\tr (\bV_{-K})}{n} - \lsk 1 - \sum_{i=1}^K \alpha_i \rsk \rabs 
\\
&=&
\labs \lmk 1 - \frac{ \sum_{i=1}^K \lambda_i(\bV)}{n} \rmk - \lsk 1 - \sum_{i=1}^K \alpha_i \rsk \rabs
\\
&=&
o(1),
\eeq
and \eqref{eqn: pf of wt V_-K w concentration used twice step ii} follows from proofs similar to \eqref{eqn: typical Hanson-Wright step ii} and \eqref{eqn: typical Hanson-Wright step iii} which uses the Hanson--Wright inequality in Lemma \ref{lemma: Hanson-Wright inequality} below.

Next we consider $n\inverse( \bw\mt\bV_K\bw )$. Denote $[\bq_1\ \dots\ \bq_K]\mt = [\bvphi_1\ \dots\ \bvphi_n] \in \mathbb{R}^{K \times n}$, where $\bvphi_i=[q_{i1}\ \dots\ q_{iK}]\mt \in \mathbb{R}^K$ for $i=1,\dots,n$. 
By the Cauchy--Schwarz inequality, we have
\beq
|q_{ij}|
=
|\bfe_i\mt\bq_j|
=
|\bfe_i\mt \bV^{1/2} \bV^{-1/2}\bq_j| 
\leq 
(\bq_j\mt \bV\inverse \bq_j)^{1/2}(\bfe_i\mt\bV\bfe_i )^{1/2} 
= 
[ \lambda_j\inverse(\bV) V_{ii} ]^{1/2},
\eeq
for $j=1,\ldots, K$,  which implies that
\begin{equation}
\bvphi_i\mt \bvphi_i 
= 
q_{i1}^2 + \dots + q_{iK}^2 
\leq 
V_{ii} [ \lambda_1\inverse (\bV) + \ldots + \lambda_K\inverse (\bV)  ]
\leq 
\lambda_K\inverse(\bV) K V_{ii}.
\label{eqn: K Vii over lambda_K bd}
\end{equation}
Under Assumption \ref{assumption::regularity-conditions} (iv), we have $\Viilow / \Viiup \leq  V_{ii} \leq \Viiup / \Viilow$ for $ i=1,\dots,n$.
Define
\begin{equation}
\bS_n
:=\begin{bmatrix}
\bvphi_1 & \dots & \bvphi_n
\end{bmatrix}\bw = \sum_{i=1}^n w_i \bvphi_i,
\label{eqn: Sn = sum of phi_i w_i}
\end{equation}
so that
\begin{equation}
   \frac{\bw\mt\bV_K\bw}{n}= \bS_n\mt \mathrm{diag}\lbk \frac{\lambda_1(\bV)}{n},\dots,\frac{\lambda_K(\bV)}{n} \rbk \bS_n.
\label{eqn: wt V_K w / n} 
\end{equation}

We will use the Cramér--Wold  device to show the asymptotic normality of $\bS_n$, that is, we will show that for any real vector $\bb \in \mathbb{R}^K$,
$
\bb\mt\bS_n=\sum_{i=1}^n w_i \bb\mt \bvphi_i \overset{L}{\longrightarrow} \mathcal{N}(0, \bb\mt\bI_K \bb)
$.
Since $\bvphi_i$, $i=1,\dots,n$ changes with $\bV$ as $n$ changes, $\{ w_i \bb\mt \bvphi_i \}_{i=1}^n$ forms a triangular array as $n$ increases.
So we will use the Lyapounov Central Limit Theorem (Lemma \ref{lemma: Lyapounov triangle CLT} in Section \ref{subsubsec: lemmas for the asymp theory}) for this triangular array.
Here are some basic facts: 
\begin{enumerate}
    \item For each $n$, $w_i\bb\mt\bvphi_i,\ i=1,\dots,n$, are independent;
    \item $\e( w_i\bb\mt\bvphi_i ) = 0$ and $
\e ( w_i \bb \mt \bvphi_i )^2 
= 
(\bb\mt \bvphi_i)^2
\leq 
(\bb\mt \bb) (\bvphi_i\mt \bvphi_i)
< 
\infty
$;
\item we have 
\begin{eqnarray}
& & \lmk \sum_{i=1}^n \e (w_i \bb\mt \bvphi_i)^2 \rmk^{-3/2}\sum_{i=1}^n \e \labs w_i\bb\mt\bvphi_i \rabs^3 \nonumber\\ 
&=&
( \bb\mt\bb )^{-3/2}\sum_{i=1}^n \e | w_i\bb\mt\bvphi_i |^3
\label{eqn: btb^(-3/2) sum of E|wibtphi_i|^3 step i}
\\
& \leq & 
( \bb\mt\bb )^{-3/2} (\max_i \e | w_i |^3) \sum_{i=1}^n | \bb\mt\bvphi_i |^3  \nonumber \\
&\leq &
( \bb\mt\bb )^{-3/2} C \sum_{i=1}^n  ( \bb\mt\bb)^{3/2} (\bvphi_i\mt\bvphi_i) ^{3/2} 
\label{eqn: btb^(-3/2) sum of E|wibtphi_i|^3 step ii}\\
&\leq& C  K^{3/2} \Viiup^{3/2} n  \lambda_K^{-3/2}(\bV) \longrightarrow 0,
\label{eqn: btb^(-3/2) sum of E|wibtphi_i|^3 step iii}
\end{eqnarray}
where \eqref{eqn: btb^(-3/2) sum of E|wibtphi_i|^3 step i} follows from 
\beq
\sum_{i=1}^n\e \lsk w_i \bb\mt\bvphi_i \rsk^2 
= \bb\mt\lsk \sum_{i=1}^n \bvphi_i\bvphi_i\mt \rsk \bb 
= \bb\mt \begin{bmatrix}
\bq_1 & \dots & \bq_K
\end{bmatrix}\mt \begin{bmatrix}
\bq_1 & \dots & \bq_K
\end{bmatrix}\bb=\bb\mt\bb, 
\eeq
\eqref{eqn: btb^(-3/2) sum of E|wibtphi_i|^3 step ii} follows from the fact that under Assumption \ref{assumption::regularity-conditions} (i), $\max_i \e | w_i |^3$ is bounded by $C$ related to $K_w$ and the  Cauchy--Schwarz inequality,
\eqref{eqn: btb^(-3/2) sum of E|wibtphi_i|^3 step iii} follows from \eqref{eqn: K Vii over lambda_K bd} and $\sum_{i=1}^n V_{ii}^{3/2} \leq n V_{\mathrm{up}}^{3/2}$.
Since $\lambda_K(\bV) \asymp n$, \eqref{eqn: btb^(-3/2) sum of E|wibtphi_i|^3 step iii} converges to zero.
\end{enumerate}

By the Lyapounov Central Limit Theorem in Lemma \ref{lemma: Lyapounov triangle CLT} below, we have 
\beq
(\bb\mt \bb)^{-1/2}
\sum_{i=1}^n w_i \bb\mt \bvphi_i 
\overset{L}{\longrightarrow} 
\mathcal{N}(0,1) 
\Rightarrow 
\bb\mt \bS_n=\sum_{i=1}^n w_i \bb\mt\bvphi_i \overset{L}{\longrightarrow}\mathcal{N}(0, \bb\mt\bI_K\bb).
\eeq
By the Cramér--Wold Theorem in Lemma \ref{lemma: Cramer-Wold Theorem} below,
we have $\bS_n \overset{L}{\longrightarrow} [Z_1\ \dots\ Z_K]\mt$, 
where $Z_1,\dots,Z_K \overset{iid}{\sim} \mathcal{N}(0,1)$. 
By $n^{-1/2}\lambda_i^{1/2}(\bV) \to \alpha_i^{1/2}$ for $i=1,\dots,K$, Slutsky's theorem, and the definition of $\bS_n$ in \eqref{eqn: Sn = sum of phi_i w_i}, we have
\beq
\mathrm{diag}\lbk
\sqrt{\frac{\lambda_1(\bV)}{n}}, \dots,\sqrt{{\frac{\lambda_K(\bV)}{n}}}
\rbk \bS_n\overset{L}{\longrightarrow} \mathrm{diag}\lbk
\sqrt{\alpha_1},\dots, \sqrt{\alpha_K}
\rbk\lmk
Z_1\ \dots\ Z_K\rmk\mt.
\eeq
By the continuous mapping theorem and \eqref{eqn: wt V_K w / n}, we have
$
n\inverse {\bw\mt\bV_K\bw}
\overset{L}{\longrightarrow} \sum_{i=1}^K \alpha_i Z_i^2$.
Finally using Slutsky's theorem again, we have
\beq
\frac{\bw\mt \bV \bw}{n} = \frac{\bw\mt\bV_K\bw}{n} + \frac{\bw\mt\bV_{-K}\bw}{n} \overset{L}{\longrightarrow} \sum_{i=1}^K \alpha_i Z_i^2 + 1-\sum_{i=1}^K \alpha_i.
\eeq

\subsection{Other Technical Lemmas}

\subsubsection{Lemmas for the Asymptotic Theory}
\label{subsubsec: lemmas for the asymp theory}

\setcounter{lemma}{2}

\begin{lemma}
(Lyapounov Central Limit Theorem, (\cite{lehmann2005testing} Corollary 11.2.1)).
Suppose for each $n$, $\xi_{n, 1}, \dots, \xi_{n, r_n}$ are independent with $\e (\xi_{n, i}) = 0$ and $\sigma_{n, i}^2 = \e (\xi_{n, i}^2) < \infty$.
Let $s_n^2 = \sum_{i=1}^{r_n} \sigma_{n, i}^2 $.
Assume that for some $\eta > 0$, it holds that
$$ \lim_{n \to \infty} \sum_{i=1}^{r_n} \frac{1}{s_n^{2+\eta}} \e( |\xi_{n, i}^{2+\eta}| ) < \infty.$$
Then $\sum_{i=1}^{r_n} X_{n, i}/s_n \overset{L}{\longrightarrow} \mathcal{N}(0,1)$.
\label{lemma: Lyapounov triangle CLT}
\end{lemma}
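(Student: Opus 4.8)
The plan is to deduce the statement from the Lindeberg--Feller central limit theorem for triangular arrays, by showing that the stated Lyapounov-type moment hypothesis forces the Lindeberg condition. I read the hypothesis in its standard (Lyapounov) form, namely $L_n := s_n^{-(2+\eta)}\sum_{i=1}^{r_n}\e|\xi_{n,i}|^{2+\eta}\to 0$; mere boundedness of this quantity is not sufficient, as a single fixed non-normal summand with a finite $(2+\eta)$-th moment keeps $L_n$ bounded while destroying normality. The conclusion is understood to concern the normalized partial sum $s_n^{-1}\sum_{i=1}^{r_n}\xi_{n,i}$.

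First I would recall that the Lindeberg--Feller theorem yields $s_n^{-1}\sum_{i=1}^{r_n}\xi_{n,i}\overset{L}{\longrightarrow}\mathcal{N}(0,1)$ as soon as, for every $\varepsilon>0$, the Lindeberg quantity $\Lambda_n(\varepsilon):=s_n^{-2}\sum_{i=1}^{r_n}\e[\,\xi_{n,i}^2\,\mathbf{1}(|\xi_{n,i}|>\varepsilon s_n)\,]$ tends to $0$. The crux is the elementary pointwise inequality, valid on the event $\{|\xi_{n,i}|>\varepsilon s_n\}$, that $\xi_{n,i}^2\le|\xi_{n,i}|^{2+\eta}/(\varepsilon s_n)^{\eta}$. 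Multiplying by the indicator, taking expectations, summing over $i$, and dividing by $s_n^2$ gives, for each fixed $\varepsilon>0$,
\[
\Lambda_n(\varepsilon)\ \le\ \frac{1}{\varepsilon^{\eta}\,s_n^{2+\eta}}\sum_{i=1}^{r_n}\e|\xi_{n,i}|^{2+\eta}\ =\ \varepsilon^{-\eta}L_n\ \longrightarrow\ 0 .
\]
Thus the Lindeberg condition holds, and the Lindeberg--Feller theorem delivers the claim; this route works for every $\eta>0$.

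If one prefers a self-contained argument that does not invoke Lindeberg--Feller as a black box, I would pass to characteristic functions after rescaling to $Y_{n,i}=\xi_{n,i}/s_n$, so that $\sum_{i}\mathrm{Var}(Y_{n,i})=1$. The two inputs are the uniform asymptotic negligibility $\max_i\mathrm{Var}(Y_{n,i})\to 0$, which follows from $\mathrm{Var}(Y_{n,i})\le(\e|Y_{n,i}|^{2+\eta})^{2/(2+\eta)}$ together with $L_n\to 0$, and the Taylor expansion $\e\,e^{\mathrm{i}tY_{n,i}}=1-\tfrac12 t^2\,\mathrm{Var}(Y_{n,i})+R_{n,i}(t)$ with the moment remainder bound $|R_{n,i}(t)|\le|t|^{2+\eta}\e|Y_{n,i}|^{2+\eta}$ (valid for $\eta\in(0,1]$, which is the only case one needs since the first route already covers general $\eta$). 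Summing over $i$ and using $\sum_i\mathrm{Var}(Y_{n,i})=1$ and $\sum_i|R_{n,i}(t)|\le|t|^{2+\eta}L_n\to 0$ shows that the product of characteristic functions converges to $e^{-t^2/2}$, whence Lévy's continuity theorem concludes. I expect the only real obstacle to lie in this second route, where one must justify passing from the product of individual characteristic functions to its Gaussian limit; the negligibility $\max_i\mathrm{Var}(Y_{n,i})\to 0$ is precisely what makes this legitimate. The reduction to Lindeberg--Feller avoids that difficulty altogether, and so is the path I would take.
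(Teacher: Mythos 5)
Your proof is correct. Note, however, that the paper does not actually prove this lemma: it is quoted verbatim (as Lemma \ref{lemma: Lyapounov triangle CLT}) from Lehmann and Romano, Corollary 11.2.1, so there is no in-paper argument to compare against. Your reduction to the Lindeberg--Feller theorem via the inequality $\xi_{n,i}^2 \le |\xi_{n,i}|^{2+\eta}/(\varepsilon s_n)^{\eta}$ on the event $\{|\xi_{n,i}|>\varepsilon s_n\}$ is the standard derivation and is sound, and the characteristic-function route you sketch as an alternative is also fine. Your most valuable observation is the one about the hypothesis: as printed, the lemma requires only $\lim_n s_n^{-(2+\eta)}\sum_i \e|\xi_{n,i}|^{2+\eta} < \infty$, which is indeed insufficient (your example of a single fixed non-normal summand shows this); the condition in Lehmann and Romano is that this quantity tends to $0$, and that is what your argument correctly uses. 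The conclusion also contains a typo ($X_{n,i}$ for $\xi_{n,i}$), which you silently and correctly repair. In short: the proposal supplies a correct proof of a result the paper only cites, and in doing so catches a misstatement in the paper's transcription of the hypothesis.
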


\begin{lemma}
(Cramér–Wold Theorem, (\cite{patrick1995probability}, Theorem 29.4)) 
For random vectors $\{ \bxi_n \}_{n=1}^\infty  \subset \mathbb{R}^K$ and $\bxi   \in \mathbb{R}^K$, a necessary and sufficient condition for $\bxi_n \overset{L}{\longrightarrow} \bxi$ is that $\bb\mt \bxi_n \overset{L}{\longrightarrow} \bb\mt \bxi$ for all $\bb \in \mathbb{R}^K$.
\label{lemma: Cramer-Wold Theorem}
\end{lemma}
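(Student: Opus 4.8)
The plan is to prove the two implications separately, using throughout the identification of convergence in law with pointwise convergence of characteristic functions. Write $\phi_{\bxi_n}(\bt) = \e[\exp(\mathrm{i}\, \bt\mt \bxi_n)]$ and $\phi_{\bxi}(\bt) = \e[\exp(\mathrm{i}\, \bt\mt \bxi)]$ for the characteristic functions on $\br^K$, and for a scalar random variable $Y$ write $\psi_Y(s) = \e[\exp(\mathrm{i}\, s Y)]$, $s \in \br$. The necessity direction is immediate: if $\bxi_n \overset{L}{\longrightarrow} \bxi$, then for each fixed $\bb \in \br^K$ the map $\bx \mapsto \bb\mt \bx$ is continuous from $\br^K$ to $\br$, so the continuous mapping theorem yields $\bb\mt \bxi_n \overset{L}{\longrightarrow} \bb\mt \bxi$ with no further work.

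For the sufficiency direction, suppose $\bb\mt \bxi_n \overset{L}{\longrightarrow} \bb\mt \bxi$ for every $\bb \in \br^K$, and deduce $\bxi_n \overset{L}{\longrightarrow} \bxi$. The conceptual crux is that the full $K$-dimensional characteristic function is recovered from the scalar projections by evaluating at argument $1$: for any fixed $\bt \in \br^K$,
\[
\phi_{\bxi_n}(\bt) = \e[\exp(\mathrm{i}\, \bt\mt \bxi_n)] = \psi_{\bt\mt \bxi_n}(1), \qquad \phi_{\bxi}(\bt) = \psi_{\bt\mt \bxi}(1).
\]
Taking $\bb = \bt$ in the hypothesis gives $\bt\mt \bxi_n \overset{L}{\longrightarrow} \bt\mt \bxi$, and one-dimensional convergence in law implies pointwise convergence of the scalar characteristic functions, so in particular $\psi_{\bt\mt \bxi_n}(1) \to \psi_{\bt\mt \bxi}(1)$. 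Combining the two displays yields $\phi_{\bxi_n}(\bt) \to \phi_{\bxi}(\bt)$ for every $\bt \in \br^K$.

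To convert this pointwise convergence of characteristic functions back into convergence in law, I would invoke the multivariate L\'evy continuity theorem: if $\phi_{\bxi_n}(\bt) \to g(\bt)$ pointwise and the limit $g$ is continuous at the origin, then $g$ is the characteristic function of some probability law and $\bxi_n$ converges in law to it. Here the limit $g = \phi_{\bxi}$ is already known to be the characteristic function of $\bxi$, hence continuous everywhere, so the hypotheses hold automatically and we conclude $\bxi_n \overset{L}{\longrightarrow} \bxi$.

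The only genuinely substantive ingredients are the continuity theorem and the uniqueness theorem guaranteeing that a distribution is determined by its characteristic function; both are standard and are precisely the machinery underlying Theorem 29.4 of \cite{patrick1995probability}. I expect the main obstacle to be nothing computational but rather the clean statement of the identity $\phi_{\bxi_n}(\bt) = \psi_{\bt\mt \bxi_n}(1)$, which is the device that reduces the multivariate convergence problem to the family of one-dimensional ones and makes the continuity theorem applicable.
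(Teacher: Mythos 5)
Your proof is correct and is the standard characteristic-function argument (continuous mapping for necessity; the identity $\phi_{\bxi_n}(\bt)=\psi_{\bt\mt\bxi_n}(1)$ plus the L\'evy continuity theorem for sufficiency), which is exactly the proof in the cited source. The paper itself states this lemma as a known result from \cite{patrick1995probability} without proof, so there is nothing in the paper to diverge from.
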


\subsubsection{Lemmas about Concentration Inequalities}

\begin{lemma}\label{lemma: sum of indep sub-Gaussians}
\textup{(Sums of independent sub-Gaussian, \citet[Proposition 2.6.1]{vershynin2018high})}
Let $\xi_1,\dots,\xi_n$ be independent, mean zero, sub-Gaussian random variables.
Then $\sum_{i=1}^n \xi_i$ is also a sub-Gaussian random variable, and
\beq
\left\| \sum_{i=1}^n \xi_i \right\|_{\psi_2}^2 \leq C \sum_{i=1}^n  \| \xi_i \|_{\psi_2}^2,
\eeq
where $C$ is an absolute constant.
\end{lemma}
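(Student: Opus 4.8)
The plan is to pass to the moment generating function (MGF), where independence converts the sum into a product and the claimed additivity of the squared $\psi_2$ norms becomes transparent. First I would record the two-sided equivalence that drives the argument: for a \emph{mean-zero} random variable $\xi$ with $K = \| \xi \|_{\psi_2}$,
\beq
\e \lmk \exp( \lambda \xi ) \rmk &\leq& \exp( C \lambda^2 K^2 ), \quad \lambda \in \mathbb{R},
\eeq
and conversely any uniform bound $\e [ \exp(\lambda \xi) ] \leq \exp( \lambda^2 M^2 )$ forces $\| \xi \|_{\psi_2} \lesssim M$. This is the standard equivalence of sub-Gaussian characterizations (e.g.\ Proposition 2.5.2 in \cite{vershynin2018high}), which one obtains by threading through the tail bound $\p ( |\xi| \geq t ) \leq 2 \exp( - c t^2 / K^2 )$ and the moment growth $\e | \xi |^k \leq ( C k )^{k/2} K^k$.

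Given this, the main computation is immediate. Writing $S_n = \sum_{i=1}^n \xi_i$ and $K_i = \| \xi_i \|_{\psi_2}$, the forward bound applied termwise together with independence yields
\beq
\e \lmk \exp( \lambda S_n ) \rmk
&=& \prod_{i=1}^n \e \lmk \exp( \lambda \xi_i ) \rmk
\;\leq\; \prod_{i=1}^n \exp( C \lambda^2 K_i^2 )
= \exp \lsk C \lambda^2 \sum_{i=1}^n K_i^2 \rsk
\eeq
for every $\lambda \in \mathbb{R}$. Since $S_n$ is again mean zero, the converse direction applies to $S_n$ with $M^2 = C \sum_{i=1}^n K_i^2$ and gives $\| S_n \|_{\psi_2}^2 \lesssim \sum_{i=1}^n K_i^2$, which is the assertion.

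The entire difficulty sits in the forward direction of the equivalence, and this is precisely where mean-zeroness is indispensable. For small $|\lambda| K$ I would Taylor-expand $\exp(\lambda\xi) = 1 + \lambda\xi + \sum_{k \geq 2} (\lambda\xi)^k / k!$, discard the linear term using $\e[\xi] = 0$, and bound the tail of the series with the moment estimate above, producing the \emph{clean} bound $\e[\exp(\lambda\xi)] \leq \exp(C\lambda^2 K^2)$ with no stray multiplicative constant. For large $|\lambda| K$, the elementary inequality $\lambda\xi \leq \lambda^2 K^2 + \xi^2 / (4K^2)$ together with $\e[\exp(\xi^2/K^2)] \leq 2$ gives $\e[\exp(\lambda\xi)] \leq 2\exp(\lambda^2 K^2)$, and the factor $2$ is absorbed into the exponent exactly because $|\lambda| K$ is large. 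The only genuine care required is to keep the absolute constants uniform across these two regimes and across both directions, since a constant \emph{prefactor} (as opposed to a constant inside the exponent) would compound to $2^n$ after taking the product and destroy the bound; the remaining probabilistic content is routine.
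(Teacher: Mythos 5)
Your proof is correct and is precisely the standard MGF argument behind the cited result: the paper does not prove this lemma itself but quotes it from \citet[Proposition 2.6.1]{vershynin2018high}, whose proof is exactly the factorization of the moment generating function over independent mean-zero summands followed by the converse sub-Gaussian characterization. Your care about keeping the constant inside the exponent (rather than as a prefactor that would compound to $2^n$) is the right point to flag, and the two-regime argument for the forward bound is the standard way to handle it.
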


\begin{lemma}\label{lemma: Bernstein's inequality}
\textup{(Bernstein's inequality, \citet[Theorem 2.8.1]{vershynin2018high})}
Let $\xi_1, \ldots, \xi_N$ be independent, mean
zero, sub-exponential random variables. Then, for every $t \geq 0$, we have,
\begin{equation}
\p \lbk \sum_{i=1}^N \xi_i \geq t \rbk \leq \exp  \lmk  - c \min \lsk  \frac{t^2}{\sum_{i=1}^N \| \xi_i \|_{\psi_1}^2},  \frac{t}{\max_i \| \xi_i \|_{\psi_1}}   \rsk  \rmk,
\label{eqn: Bernstein inequality right side bd} 
\end{equation}
where $c$ is an absolute constant.
The bound for $\p \{ \sum_{i=1}^N \xi_i \leq -t  \}$ is the same as \eqref{eqn: Bernstein inequality right side bd}.
\end{lemma}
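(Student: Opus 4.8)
The plan is to run the classical Chernoff argument for a sum of independent sub-exponential variables, carefully tracking the two regimes that give rise to the $\min$ in \eqref{eqn: Bernstein inequality right side bd}. Write $S = \sum_{i=1}^N \xi_i$, $K_i = \| \xi_i \|_{\psi_1}$, $\tau^2 = \sum_{i=1}^N K_i^2$, and $K_* = \max_{i} K_i$.

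First I would establish the single-variable moment generating function bound: for a mean-zero sub-exponential $\xi$ with $\| \xi \|_{\psi_1} = K$, there exist absolute constants $C_0, c_0 > 0$ such that
\begin{equation*}
\e \lmk \exp(\lambda \xi) \rmk \leq \exp \lsk C_0 \lambda^2 K^2 \rsk \quad \text{whenever } |\lambda| \leq c_0 / K .
\end{equation*}
This is the standard equivalence between the sub-exponential Orlicz norm and control of the MGF near the origin: one Taylor-expands $\exp(\lambda \xi)$, bounds the absolute moments by $\e | \xi |^p \leq p!\,(C K)^p$ through the $\psi_1$-norm, and sums the resulting geometric series for $| \lambda | \leq c_0/K$. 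The mean-zero hypothesis removes the linear term, leaving a leading quadratic contribution.

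Next, independence tensorizes the MGF, so that for $| \lambda | \leq c_0 / K_*$,
\begin{equation*}
\e \lmk \exp(\lambda S) \rmk = \prod_{i=1}^N \e \lmk \exp( \lambda \xi_i ) \rmk \leq \exp \lsk C_0 \lambda^2 \tau^2 \rsk .
\end{equation*}
Combining this with Markov's inequality applied to $\exp(\lambda S)$ gives, for every $0 \leq \lambda \leq c_0/K_*$,
\begin{equation*}
\p \lbk S \geq t \rbk \leq \exp \lsk - \lambda t + C_0 \lambda^2 \tau^2 \rsk .
\end{equation*}

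The crux is the constrained optimization over $\lambda$, which is precisely what generates the two terms in the $\min$. The unconstrained minimizer of the exponent is $\lambda^\star = t / (2 C_0 \tau^2)$. If $\lambda^\star \leq c_0 / K_*$ — the sub-Gaussian regime $t \lesssim \tau^2 / K_*$ — substituting $\lambda = \lambda^\star$ yields $\p( S \geq t ) \leq \exp \{ - t^2 / (4 C_0 \tau^2) \}$. Otherwise — the heavy-tail regime $t \gtrsim \tau^2 / K_*$ — the exponent is still decreasing at the boundary, so I take $\lambda = c_0 / K_*$ and use $C_0 \lambda^2 \tau^2 \leq (1/2) \lambda t$ in this regime (which holds exactly because $\lambda^\star > c_0/K_*$) to absorb the quadratic term, obtaining $\p( S \geq t ) \leq \exp \{ - c_0 t / (2 K_*) \}$. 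In both cases the exponent is at most $-c \min \{ t^2 / \tau^2 , t / K_* \}$ for a suitable absolute constant $c$, giving the stated upper-tail bound. Applying the same argument to $- \xi_1, \ldots, - \xi_N$ — again mean-zero sub-exponential with the same $\psi_1$-norms — yields the identical bound for $\p( S \leq - t )$. I expect the main obstacle to be proving the single-variable MGF estimate with clean absolute constants, since that is the step where all the sub-exponential structure genuinely enters; the two-regime optimization that follows is essentially bookkeeping around the constraint $|\lambda| \leq c_0/K_*$.
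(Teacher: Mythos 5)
The paper does not prove this lemma at all: it is quoted verbatim from \citet[Theorem 2.8.1]{vershynin2018high} and used as a black box. Your argument is correct and is essentially the standard proof given in that reference --- the sub-exponential MGF bound $\e\exp(\lambda\xi)\le\exp(C_0\lambda^2K^2)$ for $|\lambda|\le c_0/K$, tensorization by independence, and the constrained Chernoff optimization over $\lambda\in[0,c_0/K_*]$ whose two regimes (interior minimizer versus boundary, where $C_0\lambda^2\tau^2\le\lambda t/2$ absorbs the quadratic term) generate exactly the $\min$ in the exponent, with the lower tail obtained by negation.
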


\begin{lemma}\label{lemma: Hanson-Wright inequality}
\textup{(Hanson--Wright inequality, \citet[Theorem 6.2.1]{vershynin2018high})}
Let $\bxi = (\xi_1, \dots, \xi_n)\mt \in \mathbb{R}^n$ be a random vector with independent, mean zero, sub-Gaussian coordinates. Let $\bA \in \mathbb{R}^{n \times n}$ be a fixed matrix. Then, for every $t \geq 0$, we have
\begin{equation*}
\p \lbk \labs \bxi\mt \bA \bxi - \e \lsk \bxi\mt \bA \bxi \rsk  \rabs \geq t \rbk 
\leq 
2 \exp \lmk  -c \min \lsk  \frac{t^2}{ \max_i^4 \| \xi \|_{\psi_2}  \| \bA \|_{\textup{F}}^2 }, \frac{t}{ \max_i^2 \| \xi \|_{\psi_2}  \| \bA \| }  \rsk  \rmk.   
\end{equation*}
\end{lemma}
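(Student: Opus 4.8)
The plan is to control the moment generating function (MGF) of the centered quadratic form $S := \bxi\mt \bA \bxi - \e ( \bxi\mt \bA \bxi )$ and then apply the Chernoff method. After rescaling so that $\max_i \| \xi_i \|_{\psi_2} = 1$ (the factors $\max_i^4 \| \xi \|_{\psi_2}$ and $\max_i^2 \| \xi \|_{\psi_2}$ are restored at the end by homogeneity in $\bxi$), I would split $\bA$ into its diagonal part $\bD = \diag(\bA)$ and its off-diagonal part $\bA - \bD$, giving $S = S_{\mathrm{diag}} + S_{\mathrm{off}}$ with $S_{\mathrm{diag}} = \sum_{i=1}^n A_{ii} ( \xi_i^2 - \e \xi_i^2 )$ and $S_{\mathrm{off}} = \sum_{i \neq j} A_{ij} \xi_i \xi_j$. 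The two pieces require different tools, and for each it suffices to establish a sub-gamma MGF bound of the form $\e \exp ( \lambda S_\bullet ) \leq \exp ( C \lambda^2 \| \bA \|_{\textup{F}}^2 )$ valid for all $|\lambda| \leq c / \| \bA \|$; such a bound translates, via the exponential Markov inequality $\p ( S_\bullet \geq t ) \leq \exp ( -\lambda t ) \, \e \exp ( \lambda S_\bullet )$ optimized over $\lambda$ in the admissible range, into exactly the two-regime tail $\exp [ -c \min ( t^2 / \| \bA \|_{\textup{F}}^2, t / \| \bA \| ) ]$.

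The diagonal part is routine. Each summand $A_{ii} ( \xi_i^2 - \e \xi_i^2 )$ is mean zero and sub-exponential, since $\xi_i$ sub-Gaussian forces $\xi_i^2$ to be sub-exponential with $\| A_{ii}(\xi_i^2 - \e \xi_i^2) \|_{\psi_1} \lesssim |A_{ii}| \, \| \xi_i \|_{\psi_2}^2$. The summands are independent, so Bernstein's inequality (Lemma \ref{lemma: Bernstein's inequality}) applies directly, using $\sum_i \| A_{ii}(\xi_i^2 - \e \xi_i^2) \|_{\psi_1}^2 \lesssim \sum_i A_{ii}^2 \leq \| \bA \|_{\textup{F}}^2$ and $\max_i \| A_{ii}(\xi_i^2 - \e \xi_i^2) \|_{\psi_1} \lesssim \max_i |A_{ii}| \leq \| \bA \|$, which yields the desired tail for $S_{\mathrm{diag}}$.

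The off-diagonal part is the crux, and bounding its MGF is where the real work lies. The idea is a \emph{decoupling} argument: replacing the second copy of $\bxi$ by an independent copy $\bxi' = (\xi_1', \dots, \xi_n')\mt$ costs only a universal constant, so $\e \exp ( \lambda S_{\mathrm{off}} ) \leq \e \exp ( 4 \lambda \sum_{i,j} A_{ij} \xi_i \xi_j' )$ by the standard decoupling inequality \citep[Theorem 6.1.1]{vershynin2018high}. I would then condition on $\bxi'$ and write the inner sum over $i$ as the linear combination $\sum_{i=1}^n \xi_i b_i$ with $b_i = \sum_j A_{ij} \xi_j'$; by Lemma \ref{lemma: sum of indep sub-Gaussians} this is sub-Gaussian with $\| \sum_i \xi_i b_i \|_{\psi_2}^2 \lesssim \| \bb \|_2^2 = \| \bA \bxi' \|_2^2$, so that $\e_{\bxi} \exp ( 4 \lambda \sum_i \xi_i b_i ) \leq \exp ( C \lambda^2 \| \bA \bxi' \|_2^2 )$. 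Taking the remaining expectation over $\bxi'$ reduces the problem to controlling $\e_{\bxi'} \exp ( C \lambda^2 \| \bA \bxi' \|_2^2 )$, i.e. the MGF of the sub-Gaussian quadratic form $\bxi'\mt \bA\mt \bA \bxi'$.

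This last estimate is the main obstacle, because it is again the MGF of a quadratic form and is therefore self-referential. I expect to resolve it by restricting $\lambda$ to the range $|\lambda| \leq c / \| \bA \|$ and bounding $\e \exp ( s \| \bA \bxi' \|_2^2 )$ for small $s = C \lambda^2$ through a diagonalization of $\bA\mt \bA$: in its eigenbasis the quadratic form decouples into $\sum_k \mu_k ( \text{sub-Gaussian} )^2$ with $\sum_k \mu_k = \| \bA \|_{\textup{F}}^2$ and $\max_k \mu_k = \| \bA \|^2$, and a term-by-term sub-exponential MGF bound, valid precisely because $s \| \bA \|^2$ is small on the admissible range, gives $\e_{\bxi'} \exp ( C \lambda^2 \| \bA \bxi' \|_2^2 ) \leq \exp ( C' \lambda^2 \| \bA \|_{\textup{F}}^2 )$. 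Combining this with the diagonal estimate and optimizing the Chernoff bound over $\lambda$ delivers the two regimes, and undoing the normalization reinstates the $\max_i^4 \| \xi \|_{\psi_2}$ and $\max_i^2 \| \xi \|_{\psi_2}$ scalings, completing the proof.
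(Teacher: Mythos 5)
The paper does not prove this lemma at all---it is quoted verbatim from \citet[Theorem 6.2.1]{vershynin2018high}---so the relevant benchmark is the textbook proof, which your outline follows faithfully through the rescaling, the diagonal/off-diagonal split, the Bernstein step for $S_{\mathrm{diag}}$, the decoupling step, and the conditioning on $\bxi'$. All of those steps are correct.

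The gap is in the final estimate, which you rightly flag as the crux but do not actually resolve. You need $\e_{\bxi'} \exp( C\lambda^2 \|\bA\bxi'\|_2^2 ) \leq \exp( C'\lambda^2\|\bA\|_{\textup{F}}^2 )$ on the full range $|\lambda| \leq c/\|\bA\|$. Diagonalizing $\bA\mt\bA$ does write $\|\bA\bxi'\|_2^2 = \sum_k \mu_k (\bu_k\mt\bxi')^2$ with $\sum_k\mu_k = \|\bA\|_{\textup{F}}^2$ and $\max_k\mu_k=\|\bA\|^2$, but the variables $(\bu_k\mt\bxi')^2$ are \emph{not} independent across $k$ when $\bxi'$ is merely sub-Gaussian: orthogonal transformations preserve independence of coordinates only in the Gaussian case. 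Hence ``a term-by-term sub-exponential MGF bound'' is not available---the expectation of the product does not factor. The natural repair within your framework, a weighted H\"older inequality with exponents $p_k = \sum_j\mu_j/\mu_k$, forces each factor to be evaluated at argument $s\sum_j\mu_j = C\lambda^2\|\bA\|_{\textup{F}}^2$, so it is valid only for $|\lambda|\lesssim 1/\|\bA\|_{\textup{F}}$; optimizing Chernoff over that smaller range yields $\exp[-c\min(t^2/\|\bA\|_{\textup{F}}^2,\, t/\|\bA\|_{\textup{F}})]$, which is strictly weaker than Hanson--Wright in the large-deviation regime since $\|\bA\|_{\textup{F}}\geq\|\bA\|$. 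The standard resolution (and the one in Vershynin's proof) is a Gaussian comparison rather than a direct diagonalization: after conditioning on $\bxi'$, recognize $\exp(C\lambda^2\|\bA\bxi'\|_2^2)$ as $\e_{g}\exp(c'\lambda\, g\mt\bA\bxi')$ for an independent standard Gaussian $g$, swap the order of expectation, apply the sub-Gaussian linear MGF bound once more to replace $\bxi'$ by a second independent Gaussian $g'$, and only then use the singular value decomposition together with rotation invariance to factor the Gaussian chaos as $\e\exp(\lambda\, g\mt\bA g') = \prod_i \e\exp(\lambda s_i g_i g_i')\leq\exp(C\lambda^2\|\bA\|_{\textup{F}}^2)$ for $|\lambda|\leq c/\|\bA\|$. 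With that two-step Gaussian replacement inserted in place of your diagonalization, the argument closes.
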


\begin{lemma}\label{lemma: concentration of matrix singular values}
\textup{(\citet[Theorem 5.39]{vershynin2010introduction})}
Let $\bZ = [\bz_1,\dots,\bz_n]\mt$ be an $n \times d$ matrix whose rows $\bz_i\mt$ are independent sub-Gaussian vectors such that $\e (\bz_i \bz_i\mt) = \bI_d$.
Then for every $\alpha \geq 0$, with probability at least $ 1 - 2 \exp(- \widetilde{c}_1 \alpha^2) $, we have
$$ \sqrt{n} - \widetilde{C} \sqrt{d} - \alpha \leq s_{\min}(\bZ) \leq s_{\max}(\bZ) \leq \sqrt{n} + \widetilde{C} \sqrt{d} + \alpha, $$
where $\widetilde{C},\ \widetilde{c}_1 >0$ depend on $\max_{1 \leq i \leq n} \| \bz_i \|_{\psi_2}$ and $\|\bz_i\|_{\psi_2} = \sup_{\bt \in \mathbb{R}^d} \| \bt\mt \bz_i \|_{\psi_2}$.
Additionally, if $\bZ$ has independent entries $z_{ij}$ for $i=1,\dots,n$ and $j=1,\dots,d$, then  $\widetilde{C},\ \widetilde{c}_1 >0$ depend on $K_z = \max_{1 \leq i \leq n,1 \leq j \leq d} \| z_{ij} \|_{\psi_2}$.
\end{lemma}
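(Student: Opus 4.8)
The plan is to reduce the two-sided control of the extreme singular values of $\bZ$ to a single operator-norm bound on the deviation of the normalized Gram matrix $n\inverse\bZ\mt\bZ$ from $\bI_d$, and then to establish that bound by an $\varepsilon$-net argument fed by Bernstein's inequality. Writing $\bA = n^{-1/2}\bZ$, one has $s_{\min}(\bZ) = \sqrt{n}\,s_{\min}(\bA)$ and $s_{\max}(\bZ) = \sqrt{n}\,s_{\max}(\bA)$, and the elementary algebraic fact that $\|\bA\mt\bA - \bI_d\| \leq \max(\eta,\eta^2)$ forces $1-\eta \leq s_{\min}(\bA) \leq s_{\max}(\bA) \leq 1+\eta$, since the extreme values of $s(\bA)^2 - 1$ are precisely the extreme eigenvalues of $\bA\mt\bA - \bI_d$ (using $n \geq d$) and $|s^2-1|\leq\max(\eta,\eta^2)$ yields $|s-1|\leq\eta$ for every $\eta\geq0$. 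Hence it suffices to show that, with probability at least $1 - 2\exp(-\widetilde{c}_1\alpha^2)$,
\[
\big\| n\inverse\bZ\mt\bZ - \bI_d \big\| \leq \max(\eta,\eta^2), \qquad \eta = \widetilde{C}\sqrt{d/n} + \alpha/\sqrt{n},
\]
which returns $\sqrt{n} - \widetilde{C}\sqrt{d} - \alpha \leq s_{\min}(\bZ) \leq s_{\max}(\bZ) \leq \sqrt{n} + \widetilde{C}\sqrt{d} + \alpha$ after multiplying through by $\sqrt{n}$.

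To control the operator norm I would discretize the sphere. Fix a $1/4$-net $\mathcal{N}$ of the unit sphere $S^{d-1}$ with cardinality $|\mathcal{N}| \leq 9^d$; for any symmetric matrix $\bM$ one has $\|\bM\| \leq 2\max_{\bx \in \mathcal{N}} |\bx\mt\bM\bx|$, so it is enough to bound $\bx\mt(n\inverse\bZ\mt\bZ - \bI_d)\bx$ uniformly over $\mathcal{N}$. For a fixed $\bx \in S^{d-1}$ write $\bx\mt(n\inverse\bZ\mt\bZ)\bx = n\inverse\sum_{i=1}^n \langle\bz_i,\bx\rangle^2$. Because $\e(\bz_i\bz_i\mt) = \bI_d$, each summand has mean $\e\langle\bz_i,\bx\rangle^2 = \bx\mt\bx = 1$, and since $\bz_i$ is sub-Gaussian (its row norm being controlled by $K_z$ via Lemma \ref{lemma: sum of indep sub-Gaussians}), the centered square $\langle\bz_i,\bx\rangle^2 - 1$ is mean-zero sub-exponential with $\psi_1$-norm of order $K_z^2$. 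Bernstein's inequality (Lemma \ref{lemma: Bernstein's inequality}) then gives, for each fixed $\bx$,
\[
\p\Big( \big| n\inverse \sum_{i=1}^n (\langle\bz_i,\bx\rangle^2 - 1) \big| \geq t \Big) \leq 2\exp\big(-cn\min(t^2,t)\big).
\]

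The last step is a union bound over the net together with a calibration of $t$. Combining the covering inequality with the display above gives
\[
\p\big( \|n\inverse\bZ\mt\bZ - \bI_d\| \geq 2t \big) \leq 2\cdot 9^d \exp\big(-cn\min(t^2,t)\big),
\]
and I would take $t = \tfrac12\max(\eta,\eta^2)$ with $\eta$ as above, chosen so that $cn\min(t^2,t)$ exceeds $(\log 9)\,d + \widetilde{c}_1\alpha^2$; the $\sqrt{d/n}$ term in $\eta$ is exactly what absorbs the metric entropy $9^d$ of the net, while the $\alpha/\sqrt{n}$ term produces the residual $\exp(-\widetilde{c}_1\alpha^2)$ tail. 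On the resulting high-probability event, the reduction in the first paragraph converts the Gram bound into the claimed singular-value bounds. I expect the main obstacle to be this final calibration, namely verifying that a single choice of $\eta$ (hence constants $\widetilde{C},\widetilde{c}_1$ depending only on $K_z$) simultaneously dominates the net entropy and delivers the stated sub-Gaussian deviation in $\alpha$, while correctly tracking the crossover between the $\min(t^2,t)$ regimes when $\alpha$ is large; the reduction step and the per-point Bernstein bound are routine by comparison.
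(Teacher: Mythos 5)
Your proposal is correct, but note that the paper does not prove this lemma at all: it is imported verbatim as \citet[Theorem 5.39]{vershynin2010introduction}, so there is no in-paper argument to compare against. What you have written is a faithful reconstruction of the standard proof of that cited theorem --- the reduction from singular-value control to $\|n\inverse \bZ\mt\bZ - \bI_d\| \leq \max(\eta,\eta^2)$ (Vershynin's approximate-isometry lemma), the $1/4$-net with $|\mathcal{N}| \leq 9^d$ and the covering bound $\|\bM\| \leq 2\max_{\bx\in\mathcal{N}}|\bx\mt\bM\bx|$, the per-point Bernstein bound for the centered sub-exponential variables $\langle \bz_i,\bx\rangle^2 - 1$, and the union-bound calibration in which $\widetilde{C}\sqrt{d/n}$ absorbs the entropy $9^d$ and $\alpha/\sqrt{n}$ yields the $\exp(-\widetilde{c}_1\alpha^2)$ tail. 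The calibration you flag as the delicate step does go through with a single choice of $\eta$ because $n\min(t^2,t) \gtrsim n\eta^2$ in both regimes of $t=\tfrac12\max(\eta,\eta^2)$ up to constants; and your argument covers both clauses of the lemma, since the only structural input is $\|\langle\bz_i,\bx\rangle\|_{\psi_2}\lesssim K_z$, which holds by definition in the row-sub-Gaussian case and via Lemma \ref{lemma: sum of indep sub-Gaussians} in the independent-entries case. (The reduction step implicitly uses $n\geq d$ so that all $d$ eigenvalues of $\bA\mt\bA$ are squared singular values of $\bA$; this holds in every application in the paper, where $d=o(\sqrt{n})$.)
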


\begin{lemma}\label{lemma: concentration of eigenvalues of ZtZ inverse}
Assume $\bZ\in \mathbb{R}^{n \times d}$ satisfies Assumption \ref{assumption::regularity-conditions} (ii) and (iii).
Let $\widetilde{C}$ and $\widetilde{c}_1$ be absolute constants depending only on $K_z$.
\begin{enumerate}
\item[(i)] For $0 < \alpha < \sqrt{n}-\widetilde{C}\sqrt{d}$, we have
\begin{equation}
\P \lbk \lambda_{\max}\lmk \lsk \bZ\mt\bZ \rsk\inverse \rmk > \lsk \sqrt{n}-\widetilde{C} \sqrt{d}-\alpha \rsk^{-2} \rbk \leq 2 \exp \lsk -\widetilde{c}_1 \alpha^2 \rsk.
\label{eqn:concentration of lambda_max(ZtZ inverse)}
\end{equation}

\item[(ii)] For $\alpha >0$, we have
\begin{equation}
\P \lbk \lambda_{\min}\lmk \lsk \bZ\mt\bZ \rsk\inverse \rmk < \lsk \sqrt{n}+\widetilde{C}\sqrt{d}+\alpha \rsk^{-2} \rbk \leq 2 \exp \lsk -\widetilde{c}_1 \alpha^2 \rsk.
\label{eqn:concentration of lambda_min(ZtZ inverse)}
\end{equation}
\end{enumerate}
\end{lemma}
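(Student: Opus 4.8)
The plan is to reduce both statements to Lemma \ref{lemma: concentration of matrix singular values} by translating the extreme eigenvalues of $(\bZ\mt\bZ)\inverse$ into the extreme singular values of $\bZ$. First I would verify that $\bZ$ meets the hypotheses of Lemma \ref{lemma: concentration of matrix singular values}: under Assumption \ref{assumption::regularity-conditions}(ii) the rows $\bz_i\mt$ are independent sub-Gaussian vectors with $\e(\bz_i\bz_i\mt)=\bI_d$ and independent entries obeying $\max_{i,j}\|z_{ij}\|_{\psi_2}\leq K_z$, so that $\widetilde{C},\widetilde{c}_1$ depend only on $K_z$; and Assumption \ref{assumption::regularity-conditions}(iii) guarantees $\mathrm{pr}(\bZ\mt\bZ\ \text{is singular})=0$, so $(\bZ\mt\bZ)\inverse$ exists and $s_{\min}(\bZ)>0$ almost surely. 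The key identities are $\lambda_{\max}\{(\bZ\mt\bZ)\inverse\}=s_{\min}^{-2}(\bZ)$ and $\lambda_{\min}\{(\bZ\mt\bZ)\inverse\}=s_{\max}^{-2}(\bZ)$, valid a.s. since $(\bZ\mt\bZ)\inverse$ is positive definite.

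For part (i), I would note that the range restriction $0<\alpha<\sqrt{n}-\widetilde{C}\sqrt{d}$ makes $\sqrt{n}-\widetilde{C}\sqrt{d}-\alpha>0$, so applying the strictly decreasing map $x\mapsto x^{-2}$ on the positive reals turns the event $\{s_{\min}^{-2}(\bZ)>(\sqrt{n}-\widetilde{C}\sqrt{d}-\alpha)^{-2}\}$ into the equivalent event $\{s_{\min}(\bZ)<\sqrt{n}-\widetilde{C}\sqrt{d}-\alpha\}$. The latter is contained in the complement of the good event of Lemma \ref{lemma: concentration of matrix singular values}, specifically the failure of its lower bound on $s_{\min}$, whence its probability is at most $2\exp(-\widetilde{c}_1\alpha^2)$, giving \eqref{eqn:concentration of lambda_max(ZtZ inverse)}.

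For part (ii), the same reciprocal-and-square manipulation converts $\{s_{\max}^{-2}(\bZ)<(\sqrt{n}+\widetilde{C}\sqrt{d}+\alpha)^{-2}\}$ into $\{s_{\max}(\bZ)>\sqrt{n}+\widetilde{C}\sqrt{d}+\alpha\}$; here positivity is automatic since $\sqrt{n}+\widetilde{C}\sqrt{d}+\alpha>0$ for every $\alpha>0$, so no range restriction is required. This event is contained in the failure of the upper bound on $s_{\max}$ in Lemma \ref{lemma: concentration of matrix singular values}, so its probability is again at most $2\exp(-\widetilde{c}_1\alpha^2)$, establishing \eqref{eqn:concentration of lambda_min(ZtZ inverse)}.

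The argument is essentially bookkeeping; the only point demanding care is getting the direction of the inequalities right under the reciprocal map and confirming the positivity needed to apply it—in part (i) this is exactly what the hypothesis $0<\alpha<\sqrt{n}-\widetilde{C}\sqrt{d}$ secures—so I do not anticipate any genuine obstacle.
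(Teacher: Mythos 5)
Your proposal is correct and follows essentially the same route as the paper's proof: both reduce the statement to Lemma \ref{lemma: concentration of matrix singular values} via the identities $\lambda_{\max}\{(\bZ\mt\bZ)\inverse\}=1/\lambda_{\min}(\bZ\mt\bZ)=s_{\min}^{-2}(\bZ)$ and $\lambda_{\min}\{(\bZ\mt\bZ)\inverse\}=1/\lambda_{\max}(\bZ\mt\bZ)=s_{\max}^{-2}(\bZ)$, with the hypothesis $0<\alpha<\sqrt{n}-\widetilde{C}\sqrt{d}$ used exactly as you describe to justify inverting the inequality in part (i).
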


\begin{proof}[Proof of Lemma \ref{lemma: concentration of eigenvalues of ZtZ inverse}]
Since $\p \{ \bZ\mt\bZ \mathrm{\ is\ singular} \} = 0$ in Assumption \ref{assumption::regularity-conditions} (iii), we have $\lambda_{\min}[(\bZ\mt\bZ)\inverse]$ $ > 0$.

For \eqref{eqn:concentration of lambda_max(ZtZ inverse)}, since  $\lambda_{\max}[ ( \bZ\mt\bZ )\inverse  ]= 1/\lambda_{\min}( \bZ\mt\bZ )$, we have
\begin{eqnarray}
& & \P \lbk \lambda_{\max}\lmk \lsk \bZ\mt\bZ \rsk \inverse \rmk >  \lsk \sqrt{n}-\widetilde{C}\sqrt{d}-\alpha \rsk^{-2} \rbk \nonumber\\ 
&\leq &   \P \lbk \lambda_{\min}\lsk \bZ\mt\bZ \rsk < \lsk \sqrt{n}-\widetilde{C}\sqrt{d}-\alpha \rsk^2 \rbk \nonumber\\
&\leq & \P \lbk \sqrt{\lambda_{\min}\lsk \bZ\mt\bZ \rsk } < \labs \sqrt{n}-\widetilde{C}\sqrt{d}-\alpha \rabs \rbk \nonumber\\
& = &
\P \lbk s_{\min}\lsk \bZ \rsk <  \sqrt{n}-\widetilde{C}\sqrt{d}-\alpha  \rbk \\
\label{eqn: lambda_max(ZtZ inverse) bd step i}
& \leq & 2 \exp \lsk -\widetilde{c}_1 \alpha^2 \rsk,
\label{eqn: lambda_max(ZtZ inverse) bd step ii}  
\end{eqnarray}
where \eqref{eqn: lambda_max(ZtZ inverse) bd step i} follows from $\alpha < \sqrt{n}-\widetilde{C}\sqrt{d}$ so $| \sqrt{n}-\widetilde{C}\sqrt{d}-\alpha |=\sqrt{n}-\widetilde{C}\sqrt{d}-\alpha$, and \eqref{eqn: lambda_max(ZtZ inverse) bd step ii} follows from Lemma \ref{lemma: concentration of matrix singular values}. 

Since $\lambda_{\min}[ ( \bZ\mt\bZ )\inverse  ]= 1/\lambda_{\max}( \bZ\mt\bZ )$,
\eqref{eqn:concentration of lambda_min(ZtZ inverse)} can be proven by a symmetric argument.
\end{proof}

\begin{lemma}\label{lemma: concentration of vtZZtv}
Let $\widetilde{\bv} \in \mathbb{R}^n$ be any random vector distributed on the unit sphere ${S}^{n-1}$. 
Assume $\bZ\in \mathbb{R}^{n \times d}$ satisfies Assumption \ref{assumption::regularity-conditions} (ii), and $\bZ$ is independent of $\widetilde{\bv}$. 
Let $\widetilde{c}_2$ be an absolute constant depending only on $K_z$. 
Then for any $\kappa >0$, we have
\begin{equation}
\P \lbk \widetilde{\bv}\mt\bZ\bZ\mt \widetilde{\bv}>d+\kappa \rbk \leq \exp \lmk -\widetilde{c}_2 \min\lsk \kappa^2/d,\kappa \rsk \rmk,
\label{eqn: concentration of vtZZtv right side}
\end{equation}
\begin{equation}
\P \lbk \widetilde{\bv}\mt\bZ\bZ\mt \widetilde{\bv} < d - \kappa \rbk \leq \exp \lmk -\widetilde{c}_2 \min\lsk \kappa^2/d,\kappa \rsk \rmk.
\label{eqn: concentration of vtZZtv left side}
\end{equation}
\end{lemma}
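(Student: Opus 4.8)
The plan is to reduce the quadratic form to a sum of independent squared sub-Gaussian variables by conditioning on $\widetilde{\bv}$, and then to apply Bernstein's inequality uniformly over the sphere. The starting point is the identity $\widetilde{\bv}\mt \bZ\bZ\mt \widetilde{\bv} = \| \bZ\mt \widetilde{\bv} \|_2^2 = \sum_{j=1}^d g_j^2$, where $g_j = \sum_{i=1}^n z_{ij} \widetilde{v}_i$ is the $j$th coordinate of $\bZ\mt \widetilde{\bv}$ and $\widetilde{v}_i$ are the entries of $\widetilde{\bv}$. This turns the problem into a concentration statement for $\sum_j g_j^2$ around its conditional mean $d$.

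I would then condition on $\widetilde{\bv}$. Because $\bZ$ is independent of $\widetilde{\bv}$ and has independent entries, conditionally on $\widetilde{\bv}$ the columns of $\bZ$ stay independent, so $g_1, \dots, g_d$ are independent. Each $g_j$ is a weighted sum of the mean-zero, unit-variance, sub-Gaussian entries $z_{1j}, \dots, z_{nj}$ with weights satisfying $\sum_i \widetilde{v}_i^2 = 1$. Hence $\e(g_j \mid \widetilde{\bv}) = 0$, $\e(g_j^2 \mid \widetilde{\bv}) = 1$, and by Lemma \ref{lemma: sum of indep sub-Gaussians},
\[
\| g_j \|_{\psi_2}^2 \leq C \sum_{i=1}^n \widetilde{v}_i^2 \, \| z_{ij} \|_{\psi_2}^2 \leq C K_z^2 \sum_{i=1}^n \widetilde{v}_i^2 = C K_z^2 .
\]
Consequently $\xi_j := g_j^2 - 1$ are independent, mean zero, and sub-exponential with $\| \xi_j \|_{\psi_1} \lesssim \| g_j \|_{\psi_2}^2 \lesssim K_z^2$, where every constant depends only on $K_z$ and, crucially, is uniform over the unit sphere.

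Since $\sum_{j=1}^d \xi_j = \| \bZ\mt \widetilde{\bv} \|_2^2 - d$ with $\sum_j \| \xi_j \|_{\psi_1}^2 \lesssim K_z^4 d$ and $\max_j \| \xi_j \|_{\psi_1} \lesssim K_z^2$, applying Bernstein's inequality (Lemma \ref{lemma: Bernstein's inequality}) conditionally on $\widetilde{\bv}$ yields
\[
\P \lbk \sum_{j=1}^d \xi_j \geq \kappa \mid \widetilde{\bv} \rbk \leq \exp \lmk -\widetilde{c}_2 \min \lsk \kappa^2 / d, \ \kappa \rsk \rmk
\]
for a constant $\widetilde{c}_2$ depending only on $K_z$, with the identical bound for the lower tail by the symmetric statement in Lemma \ref{lemma: Bernstein's inequality}. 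Because this conditional bound does not depend on the realization of $\widetilde{\bv}$ other than through $\| \widetilde{\bv} \|_2 = 1$, taking expectation over $\widetilde{\bv}$ by the tower property leaves the bound unchanged and delivers the unconditional inequalities \eqref{eqn: concentration of vtZZtv right side} and \eqref{eqn: concentration of vtZZtv left side}.

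The steps are largely mechanical, so I expect the \emph{only} delicate point to be ensuring that all sub-Gaussian and sub-exponential norm estimates are uniform in $\widetilde{\bv}$; this uniformity is exactly what permits the conditional Bernstein bound to integrate out cleanly, and it hinges on the normalization $\| \widetilde{\bv} \|_2 = 1$ entering the variance and norm computations identically for every direction.
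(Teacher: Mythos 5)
Your proposal is correct and follows essentially the same route as the paper's proof: condition on $\widetilde{\bv}$, write the quadratic form as $\sum_{j=1}^d (\widetilde{\bv}\mt \bz_j)^2$ over the independent columns of $\bZ$, bound $\|\widetilde{\bv}\mt \bz_j\|_{\psi_2}$ uniformly via the normalization $\|\widetilde{\bv}\|_2=1$, apply Bernstein's inequality to the centered sub-exponential variables, and integrate out $\widetilde{\bv}$. The only cosmetic difference is that you explicitly flag the uniformity of the norm bounds over the sphere as the key point, which the paper leaves implicit.
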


\begin{proof}[Proof of Lemma \ref{lemma: concentration of vtZZtv}]
For \eqref{eqn: concentration of vtZZtv right side}, by the independence of $\widetilde{\bv}$ and $\bZ$, we have
\beq
& & \P \lbk \widetilde{\bv}\mt\bZ\bZ\mt \widetilde{\bv} > d + \kappa \rbk \\
&=&  \mathbb{E}_{\widetilde{\bv}}\mathbb{E}_{\bZ}\mathcal{I}\lbk \widetilde{\bv}\mt\bZ\bZ\mt\widetilde{\bv} > d + \kappa \rbk \\
&=& \mathbb{E}_{\widetilde{\bv}} \P_{\bZ} \lbk \widetilde{\bv}\mt\bZ\bZ\mt\widetilde{\bv}>d+\kappa \rbk,
\eeq
where the inner expectation $\e_{\bZ}$ is taken with respect to $\bZ$ while treating $\widetilde{\bv}$ as a constant.

If for any given $\widetilde{\bv} \in S^{n-1}$, $\p \{ \widetilde{\bv}\mt \bZ \bZ\mt \widetilde{\bv} > d + \kappa \} \leq \exp [ - \widetilde{c}_2 \min ( \kappa^2/ d, \kappa ) ]$, then we can prove \eqref{eqn: concentration of vtZZtv right side}.
We have
$
\widetilde{\bv} \mt\bZ\bZ\mt \widetilde{\bv} = \sum_{j=1}^d ( \widetilde{\bv}\mt \boldsymbol{z}_j )^2
$
where $\boldsymbol{z}_j \in \mathbb{R}^n,$ $j=1,\dots,d$, are the $j$th column of $\bZ$.
For any given $\widetilde{\bv} \in S^{n-1}$, $\widetilde{\bv}\mt \boldsymbol{z}_j$, $j=1,\dots,d$, are independent sub-Gaussian variables with zero mean, unit variance, and bounded sub-Gaussian norm $\| \widetilde{\bv}\mt \boldsymbol{z}_j \|_{\psi_2}^2 \leq C K_z^2$. 
Therefore, $[ ( \widetilde{\bv}\mt \boldsymbol{z}_j )^2-1 ]$, $j=1,\dots,d$ are independent sub-exponential variables with zero mean and bounded sub-exponential norm $\| [ ( \widetilde{\bv}\mt \boldsymbol{z}_j )^2-1 ] \|_{\psi_1} \leq C \| ( \widetilde{\bv}\mt \boldsymbol{z}_j )^2 \|_{\psi_1}=C\| \widetilde{\bv}\mt \boldsymbol{z}_j \|_{\psi_2}^2 \leq C K_z^2$. 
Thus given any $\widetilde{\bv} \in S^{n-1}$, apply Lemma \ref{lemma: Bernstein's inequality} and we have $\p \{ \widetilde{\bv}\mt \bZ \bZ\mt \widetilde{\bv} > d + \kappa \} \leq \exp [ - \widetilde{c}_2 \min ( \kappa^2/ d, \kappa ) ]$, which completes the proof of \eqref{eqn: concentration of vtZZtv left side}. 

By a similar argument, we can prove \eqref{eqn: concentration of vtZZtv left side}.
\end{proof}

\begin{lemma}
\textup{(\citet[2.1.b.]{lin2010probability})} \label{lemma: gaussian tail} 
Let $\Phi(\cdot)$ and $\phi(\cdot)$ denote the cumulative distribution function and probability density function of $\mathcal{N}(0,1)$.
For all $x > 0$, we have
\beq
\lsk  \frac{1}{x} - \frac{1}{x^3} \rsk \phi(x) 
<
\frac{x}{1 + x^2} \phi(x)
<
1 - \Phi(x)
<
\frac{1}{x} \phi(x).
\eeq
\end{lemma}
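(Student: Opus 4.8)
The plan is to prove the three inequalities one at a time, using only the elementary identities $\phi'(x) = -x\,\phi(x)$ and $1 - \Phi(x) = \int_x^\infty \phi(t)\,dt$, together with the boundary behaviour $1 - \Phi(x) \to 0$ as $x \to \infty$. Each bound is of Mills-ratio type and reduces either to a one-line integral comparison, a monotonicity argument on an auxiliary function that vanishes at infinity, or a purely algebraic inequality.

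For the rightmost bound $1 - \Phi(x) < x^{-1}\phi(x)$, I would use the integral comparison $t/x > 1$ for $t > x > 0$:
\[
1 - \Phi(x) = \int_x^\infty \phi(t)\,dt < \frac{1}{x}\int_x^\infty t\,\phi(t)\,dt = \frac{1}{x}\,\phi(x),
\]
where the last step uses $\int_x^\infty t\,\phi(t)\,dt = \phi(x)$, a consequence of $\phi' = -x\phi$. Equivalently, $g(x) = x^{-1}\phi(x) - \{1 - \Phi(x)\}$ has $g'(x) = -x^{-2}\phi(x) < 0$ and $g(\infty) = 0$, so $g > 0$ on $(0,\infty)$.

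For the middle bound $\frac{x}{1+x^2}\,\phi(x) < 1 - \Phi(x)$, I would set $h(x) = \{1 - \Phi(x)\} - \frac{x}{1+x^2}\,\phi(x)$, observe $h(\infty) = 0$, and differentiate. After substituting $\phi' = -x\phi$ and combining the resulting terms over the common denominator $(1+x^2)^2$, the derivative simplifies to
\[
h'(x) = -\frac{2}{(1+x^2)^2}\,\phi(x) < 0,
\]
so $h$ decreases to $0$ and is therefore strictly positive for $x > 0$. This derivative computation is the one place that demands care, and I expect it to be the main (though still modest) obstacle: the individual terms $\frac{1-x^2}{(1+x^2)^2}\phi(x)$ and $-\frac{x^2}{1+x^2}\phi(x)$ do not obviously collapse, and one must verify that their sum with $-\phi(x)$ telescopes to the clean form above.

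The leftmost bound $\left(x^{-1} - x^{-3}\right)\phi(x) < \frac{x}{1+x^2}\,\phi(x)$ is purely algebraic: cancelling the positive factor $\phi(x)$ leaves $\frac{x^2 - 1}{x^3} < \frac{x}{1+x^2}$, and clearing the positive denominator $x^3(1+x^2)$ reduces this to $(x^2-1)(x^2+1) < x^4$, i.e.\ $x^4 - 1 < x^4$, which holds for every $x > 0$. For $0 < x < 1$ the left-hand side is in fact negative while the right-hand side is positive, so the inequality is trivial there; the cross-multiplication therefore validly covers the whole range $x > 0$, and no further obstacle arises.
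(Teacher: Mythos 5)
The paper states this lemma as a cited classical result \citep[2.1.b]{lin2010probability} and gives no proof of its own, so there is no internal argument to compare against. Your proposal is a correct, self-contained elementary proof: the integral comparison $\phi(t) < (t/x)\phi(t)$ for $t > x$ together with $\int_x^\infty t\,\phi(t)\,\mathrm{d}t = \phi(x)$ gives the upper bound; the auxiliary function $h(x) = \{1-\Phi(x)\} - \tfrac{x}{1+x^2}\phi(x)$ does indeed satisfy $h'(x) = -2(1+x^2)^{-2}\phi(x) < 0$ (the numerator $(1+x^2)^2 + 1 - 2x^2 - x^4$ collapses to $2$) and $h(\infty)=0$, so $h>0$; and the leftmost bound reduces, after cancelling $\phi(x)>0$ and clearing the positive denominator $x^3(1+x^2)$, to $x^4 - 1 < x^4$. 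All three steps are valid for every $x>0$.
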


\subsubsection{Technical Lemmas for Theorem \ref{theorem: power}}

\begin{lemma}\label{lemma: E_1 of w}
For $\bw \in \mathbb{R}^n$ under Assumption \ref{assumption::regularity-conditions} (i), we have $\p  ( \bw\mt\bw \leq n/2 ) \leq \exp \lsk -\widetilde{c}_{4} n \rsk$, where $\widetilde{c}_{4}$ is a constant depending only on $K_w$.
\end{lemma}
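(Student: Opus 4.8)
The plan is to treat $\bw\mt\bw = \sum_{i=1}^n w_i^2$ as a sum of independent sub-exponential random variables and apply Bernstein's inequality in Lemma \ref{lemma: Bernstein's inequality}. This mirrors exactly the computation already carried out in \eqref{eqn: lemma of self-nmlz epsilon wtw tail bd} within the proof of Lemma \ref{lemma: self-normlz epsilon sub-G property}, with the only change being that the threshold $n/4$ there is replaced by $n/2$ here. First I would center the sum: since each $w_i$ has unit variance, $\e(w_i^2) = 1$, so $\bw\mt\bw - n = \sum_{i=1}^n (w_i^2 - 1)$, and the event $\{\bw\mt\bw \leq n/2\}$ coincides with $\{\sum_{i=1}^n (w_i^2 - 1) \leq -n/2\}$, to which the left-tail form of Bernstein's inequality applies.

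Next I would record the required sub-exponential bound. Under Assumption \ref{assumption::regularity-conditions}(i), $\|w_i\|_{\psi_2} \leq K_w$, so $w_i^2$ is sub-exponential with $\|w_i^2\|_{\psi_1} = \|w_i\|_{\psi_2}^2 \leq K_w^2$; centering affects this only by an absolute constant, giving $\|w_i^2 - 1\|_{\psi_1} \leq C K_w^2$ for an absolute constant $C$. Consequently $\sum_{i=1}^n \|w_i^2 - 1\|_{\psi_1}^2 \leq C^2 K_w^4\, n$ and $\max_{1\le i\le n} \|w_i^2 - 1\|_{\psi_1} \leq C K_w^2$, which are precisely the two quantities entering the Bernstein bound.

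Finally I would apply Lemma \ref{lemma: Bernstein's inequality} to the mean-zero independent summands $w_i^2 - 1$ with $t = n/2$. The two arguments of the minimum become $(n/2)^2 / (C^2 K_w^4\, n) = n / (4 C^2 K_w^4)$ and $(n/2)/(C K_w^2)$, both of order $n$. Hence the minimum is bounded below by $\widetilde{c}_4\, n$ for a constant $\widetilde{c}_4$ depending only on $K_w$, yielding $\p(\bw\mt\bw \leq n/2) \leq \exp(-\widetilde{c}_4\, n)$. Every step is a direct invocation of a cited inequality, so there is no genuine obstacle; the only point needing care is the constant bookkeeping to confirm that \emph{both} terms in the Bernstein minimum scale linearly in $n$, ensuring the exponent is truly of order $n$ rather than smaller.
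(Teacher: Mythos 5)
Your proposal is correct and follows exactly the paper's argument: the paper's proof is the same one-line application of Bernstein's inequality to the centered sum $\sum_{i=1}^n (w_i^2-1)$ with threshold $-n/2$. Your additional bookkeeping of the sub-exponential norms and the verification that both terms in the Bernstein minimum are of order $n$ is just a more explicit version of the same computation.
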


\begin{proof}[Proof of Lemma \ref{lemma: E_1 of w}]
It follows from Lemma \ref{lemma: Bernstein's inequality}  as 
$
\P ( \bw\mt\bw \leq n/2 )
= \p  \{ \sum_{i=1}^n ( w_i^2 - 1 ) \leq -n/2\}
\leq \exp (-\widetilde{c}_{4} n)
$.
\end{proof}

\begin{lemma}\label{lemma: E_2 of w}
Define
$
\gamma = \sqrt{ \widetilde{c}_5 \log n}
$
with $\widetilde{c}_5 \geq 1 / \{ 2 \min ( \widetilde{c}_1, \widetilde{c}_2 ) \}$, where $\widetilde{c}_1$ is from Lemma \ref{lemma: concentration of eigenvalues of ZtZ inverse} and $\widetilde{c}_2$ is from Lemma \ref{lemma: concentration of vtZZtv}.
If $\bw \in \mathbb{R}^n$ and $\bV \in \mathbb{R}^{n \times n}$ are from Assumption \ref{assumption::regularity-conditions} (i) and $n > \exp \lsk 1/\widetilde{c}_5 \rsk$, then 
$$\p \{ \bw\mt\bV\bw - n \geq \gamma^2 n \}\leq 2/\sqrt{n}.$$
\end{lemma}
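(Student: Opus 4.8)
The plan is to apply the Hanson--Wright inequality (Lemma \ref{lemma: Hanson-Wright inequality}) to the centered quadratic form $\bw\mt \bV \bw - \e(\bw\mt \bV \bw)$. Under Assumption \ref{assumption::regularity-conditions}(i) the entries of $\bw$ are independent, mean zero, unit variance, and sub-Gaussian with $\max_i \|w_i\|_{\psi_2} \leq K_w$, and since $\tr(\bV)=n$ we have $\e(\bw\mt\bV\bw)=\tr(\bV)=n$. Hence the event of interest is exactly the upper tail $\{\bw\mt\bV\bw - \e(\bw\mt\bV\bw) \geq \gamma^2 n\}$, which is bounded above by the two-sided Hanson--Wright estimate with $t=\gamma^2 n$, $\bA=\bV$, $\|\bV\|_{\textup F}^2=\tr(\bV^2)$, and $\|\bV\|=\lambda_{\max}(\bV)$. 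Absorbing the factors $\max_i \|w_i\|_{\psi_2}^4 \leq K_w^4$ into a constant $c$ depending only on $K_w$, this gives
\begin{equation*}
\p\{\bw\mt\bV\bw - n \geq \gamma^2 n\} \leq 2\exp\left(-c \min\left(\frac{\gamma^4 n^2}{\tr(\bV^2)},\ \frac{\gamma^2 n}{\lambda_{\max}(\bV)}\right)\right).
\end{equation*}

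The key step is to control both arguments of the minimum using only $\tr(\bV)=n$ and positive definiteness, with no assumption on the spectrum of $\bV$. Since all eigenvalues are positive and sum to $n$, I would use $\lambda_{\max}(\bV)\leq n$ together with $\tr(\bV^2)=\sum_i \lambda_i^2(\bV) \leq \lambda_{\max}(\bV)\tr(\bV)\leq \lambda_{\max}(\bV)\,n$. These bound the quadratic argument below by $\gamma^4 n/\lambda_{\max}(\bV)$ and leave the linear argument equal to $\gamma^2 n/\lambda_{\max}(\bV)$. The condition $n>\exp(1/\widetilde{c}_5)$ is equivalent to $\gamma^2=\widetilde{c}_5\log n>1$, so $\gamma^4>\gamma^2$ and the minimum is attained by the linear (sub-exponential) branch, giving $\min(\cdot,\cdot)\geq \gamma^2 n/\lambda_{\max}(\bV)$. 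Applying $\lambda_{\max}(\bV)\leq n$ a second time collapses this to $\gamma^2=\widetilde{c}_5\log n$, independent of the possibly very large spectral norm of $\bV$. Consequently $\p\{\bw\mt\bV\bw-n\geq \gamma^2 n\}\leq 2\exp(-c\widetilde{c}_5\log n)=2n^{-c\widetilde{c}_5}$, and since $\widetilde{c}_1,\widetilde{c}_2$ and the Hanson--Wright constant are all fixed positive constants, the stated lower bound $\widetilde{c}_5\geq 1/\{2\min(\widetilde{c}_1,\widetilde{c}_2)\}$ is chosen precisely to force $c\widetilde{c}_5\geq 1/2$, which yields the claimed $2/\sqrt{n}$.

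The main obstacle is the regime of strongly correlated errors, where $\lambda_{\max}(\bV)$ may grow like $n$: the crude bound $\bw\mt\bV\bw\leq \lambda_{\max}(\bV)\,\bw\mt\bw$ is useless there, since it would require $\bw\mt\bw$ to exceed a vanishing threshold. The argument instead exploits the cancellation of $\lambda_{\max}(\bV)$ between the deviation $t=\gamma^2 n$ and the denominators $\tr(\bV^2)$ and $\lambda_{\max}(\bV)$ appearing in the Hanson--Wright exponent, so that the final rate depends on $\bV$ only through $\lambda_{\max}(\bV)\leq n$. The one delicate point to verify is that the threshold $\gamma^2>1$, guaranteed by $n>\exp(1/\widetilde{c}_5)$, is exactly what makes the linear branch dominate the quadratic branch; without it the minimum could land on the quadratic term and the resulting exponent would degrade.
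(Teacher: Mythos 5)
Your argument is essentially the paper's own proof: apply Hanson--Wright to $\bw\mt\bV\bw-\tr(\bV)$, use $\tr(\bV^2)\leq n\lambda_{\max}(\bV)$ to reduce both branches of the minimum to $\gamma^2 n/\lambda_{\max}(\bV)$ once $\gamma^2\geq 1$ (guaranteed by $n>\exp(1/\widetilde{c}_5)$), and then use $\lambda_{\max}(\bV)\leq n$ to land on $\exp(-\mathrm{const}\cdot\gamma^2)=n^{-1/2}$. The only loose end, which the paper also glosses over by simply ``replacing $c$ with $(2\widetilde{c}_5)^{-1}$,'' is your claim that $\widetilde{c}_5\geq 1/\{2\min(\widetilde{c}_1,\widetilde{c}_2)\}$ forces $c\widetilde{c}_5\geq 1/2$: the Hanson--Wright constant $c$ depends on $K_w$ while $\widetilde{c}_1,\widetilde{c}_2$ depend on $K_z$, so strictly one must also take $\widetilde{c}_5\geq 1/(2c)$, which is harmless but worth stating.
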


\begin{proof}[Proof of Lemma \ref{lemma: E_2 of w}] 
We have
$$
\P \lbk \bw\mt \bV \bw  - n  \geq \gamma^2 n \rbk 
 \leq   2 \exp \lbk -\frac{1}{2 \widetilde{c}_5}   \frac{ \min \lsk \gamma^4, \gamma^2  \rsk n}{\lambda_{\max}(\bV)}  \rbk 
 \leq   2 \exp \lmk - \frac{1}{2 \widetilde{c}_5} \frac{n \gamma^2 }{\lambda_{\max}(\bV)} \rmk
\leq
\frac{2}{\sqrt{n}},
$$
where the first inequality follows from the same arguments as in \eqref{eqn: typical Hanson-Wright step i}-\eqref{eqn: typical Hanson-Wright step iii}, with $c$ in \eqref{eqn: typical Hanson-Wright step ii} replaced by $(2 \widetilde{c}_5)\inverse$ for a simpler form of $\gamma$, the second inequality uses $n > \exp (1/\widetilde{c}_5)$ so that $\gamma \geq 1$, 
and the last inequality uses $n/\lambda_{\max}(\bV) \geq 1$ and the definition of $\gamma$.
\end{proof}

\begin{lemma}\label{lemma: E_3 of w}
If $v_i$ is the $i$th coordinate of $\bv$ defined in Lemma \ref{lemma: self-normlz epsilon sub-G property} and $c$ is the corresponding constant, then under Assumption \ref{assumption::regularity-conditions} (i), (iv), and $\p\{ \bepsilon = \bzero_n \} = 0$, we have
$$\p \lsk \mathrm{there\ exists\ an}\ i \in \{ 1,\dots,n \}\ \mathrm{such\ that}\ | v_i | > \sqrt{ \frac{3}{ 2c \lambda_{\min}(\bV) } } \sqrt{\frac{\log n}{n}}  \rsk \leq \frac{4}{\sqrt{n}}.$$
\end{lemma}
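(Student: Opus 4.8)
The plan is to apply the coordinate-wise sub-Gaussian tail bound from Lemma~\ref{lemma: self-normlz epsilon sub-G property} to each $v_i$ and then take a union bound over the $n$ coordinates. Since the hypotheses imposed here---Assumption~\ref{assumption::regularity-conditions}(i), (iv), and $\p(\bepsilon = \bzero_n) = 0$---are exactly those of Lemma~\ref{lemma: self-normlz epsilon sub-G property}, that lemma applies verbatim, and $c$ denotes the same absolute constant appearing there.

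First I would recast the single-coordinate event in the scale of Lemma~\ref{lemma: self-normlz epsilon sub-G property}. The event $\{ |v_i| > \sqrt{3/(2c\lambda_{\min}(\bV))}\,\sqrt{(\log n)/n} \}$ coincides with $\{ n^{1/2}|v_i| > t \}$, hence is contained in $\{ n^{1/2}|v_i| \geq t \}$, for the calibrated threshold
\[
t := \sqrt{ \frac{3 \log n}{2 c \lambda_{\min}(\bV)} }.
\]
Applying Lemma~\ref{lemma: self-normlz epsilon sub-G property} at this $t$ gives $\p( n^{1/2}|v_i| \geq t ) \leq 4 \exp\{ -c \lambda_{\min}(\bV) t^2 \}$. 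The point of this particular choice is that it is engineered to cancel the factor $c\,\lambda_{\min}(\bV)$ in the exponent: indeed $c\,\lambda_{\min}(\bV)\, t^2 = \tfrac{3}{2}\log n$, so the per-coordinate bound collapses to $4 \exp\{ -\tfrac{3}{2}\log n \} = 4 n^{-3/2}$, uniformly in $i$ and independently of $\lambda_{\min}(\bV)$.

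Finally I would sum over the $n$ coordinates via the union bound:
\[
\p\Big( \exists\, i \in \{1,\dots,n\}:\ |v_i| > \sqrt{\tfrac{3}{2c\lambda_{\min}(\bV)}}\,\sqrt{\tfrac{\log n}{n}} \Big)
\;\leq\; \sum_{i=1}^n 4\, n^{-3/2} \;=\; 4\, n \cdot n^{-3/2} \;=\; \frac{4}{\sqrt{n}},
\]
which is the asserted bound. There is no genuine analytic obstacle in this argument; the only step requiring care is the calibration of $t$, selected so that the exponent equals $\tfrac{3}{2}\log n$ rather than merely $\log n$. The surplus factor $\tfrac{3}{2}$ is precisely what forces the per-coordinate probability to decay like $n^{-3/2}$, leaving enough room to absorb the factor of $n$ generated by the union bound and still retain a vanishing $O(n^{-1/2})$ tail.
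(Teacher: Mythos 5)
Your proof is correct and follows the same route as the paper: apply Lemma \ref{lemma: self-normlz epsilon sub-G property} with the threshold $t=\sqrt{3\log n/(2c\lambda_{\min}(\bV))}$ so the per-coordinate tail becomes $4n^{-3/2}$, then union-bound over the $n$ coordinates to obtain $4/\sqrt{n}$. No gaps.
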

\begin{proof}[Proof of Lemma \ref{lemma: E_3 of w}] 
We have
\beq
& & \p \lsk \mathrm{there\ exists\ an}\ i \in\lbk 1,\dots,n\rbk\ \mathrm{such\ that}\ \labs v_i \rabs > \sqrt{ \frac{3}{2c \lambda_{\min}(\bV) } } \sqrt{\frac{\log n}{n}} \rsk\\
&\leq & 
\sum_{i=1}^n \p \lbk \sqrt{n}\labs \bfe_i\mt\bv \rabs > \sqrt{\frac{3}{2c \lambda_{\min}(\bV) }} \sqrt{\log n} \rbk \\
& {\leq } &
\sum_{i=1}^n 4 \exp \lsk -\frac{3}{2} \log n \rsk \\
&=& \frac{4n}{n^{3/2}} = \frac{4}{\sqrt{n}},
\eeq  
where the first inequality follows from the union bound and the second inequality follows from Lemma \ref{lemma: self-normlz epsilon sub-G property} in the main paper. 
\end{proof}

\subsubsection{Technical Lemmas for Corollary \ref{corollary: power pihat_G(h,rho)}}

\begin{lemma}\label{lemma: power pihat_G(h, vecrho, vecr) Concentration}
Suppose the block diagonal matrix $\bLambda'$ has the form $\bLambda'= \mathrm{diag}\{ (1-\rho_1)\bI_{n_1}, \dots, (1-\rho_K) \bI_{n_K} \} \in \mathbb{R}^{n \times n}$ for some constants $\rho_k \in [0,1)$,  $k=1,\dots, K$ and a fixed integer $K$.
The block sizes $n_k$ satisfy $\sum_{k=1}^K n_k = n$ and  $| n_k/n - r_k | \leq 1/\sqrt{n}$ for some constants $r_k \in  (0,1]$, $k=1,\dots,K$, such that $\sum_{k=1}^K r_k=1$.
Suppose $\bw = [\bw_1\mt,\dots,\bw_K\mt]\mt \sim \mathcal{N}(\bzero_n,\bI_n)$, where $\bw_k \in \mathbb{R}^{n_k}$, $k=1,\dots,K$ are sub-vectors of $\bw$.
If $n \geq \lsk 4/\min_{1\leq k \leq K} r_k \rsk^2$, then we have
\begin{equation}
\p \lbk \labs \frac{\bw\mt \bLambda' \bw}{n} - \sum_{k=1}^K r_k (1-\rho_k) \rabs > \frac{1}{2} \sum_{k=1}^K r_k (1-\rho_k) \rbk \leq 2K \exp \lmk - \widetilde{c}_6 \lsk\min_{1 \leq k \leq K} r_k \rsk n \rmk.
\label{eqn: wt Lambda' w / n concentration conclusion}    
\end{equation}
\end{lemma}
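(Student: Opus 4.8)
The plan is to exploit the block-diagonal structure of $\bLambda'$ to reduce the quadratic form to a sum of independent chi-square pieces, and then to control each piece separately before taking a union bound. First I would write $\bw\mt\bLambda'\bw = \sum_{k=1}^K (1-\rho_k)\|\bw_k\|_2^2$, so that the centered quantity of interest becomes $n\inverse\bw\mt\bLambda'\bw - \sum_{k=1}^K r_k(1-\rho_k) = \sum_{k=1}^K (1-\rho_k)\{n\inverse\|\bw_k\|_2^2 - r_k\}$. Because the target tolerance $\frac12\sum_k r_k(1-\rho_k)$ is itself a nonnegative combination of the per-block tolerances $\frac12 r_k(1-\rho_k)$, it suffices to force each block to satisfy $|n\inverse\|\bw_k\|_2^2 - r_k| \leq \frac12 r_k$: summing these per-block bounds after multiplying by the nonnegative weights $(1-\rho_k)$ reproduces the desired global deviation bound and, crucially, cancels the dependence on $\rho_k$ entirely.

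Next I would split each per-block deviation into a deterministic and a stochastic part, $n\inverse\|\bw_k\|_2^2 - r_k = n\inverse(\|\bw_k\|_2^2 - n_k) + (n_k/n - r_k)$. The deterministic term is bounded by $1/\sqrt n$ by hypothesis, and the sample-size condition $n \geq (4/\min_{1\leq k\leq K} r_k)^2$ gives $1/\sqrt n \leq \tfrac14 \min_k r_k \leq \tfrac14 r_k$; hence the deterministic error already consumes at most a quarter of the budget, leaving the requirement $|\|\bw_k\|_2^2 - n_k| \leq \tfrac14 r_k n$ for the stochastic term. Since $\|\bw_k\|_2^2 - n_k = \sum_{i}(w_{ki}^2 - 1)$ is a sum of independent mean-zero sub-exponential variables with absolute-constant sub-exponential norm (as the $w_{ki}$ are standard normal), I would apply Bernstein's inequality (Lemma \ref{lemma: Bernstein's inequality}) with $t = \tfrac14 r_k n$.

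The delicate point will be getting the exponent linear in $\min_k r_k$ rather than quadratic. In the Bernstein bound the variance-proxy term is $t^2 / \sum_i \|w_{ki}^2-1\|_{\psi_1}^2 \asymp (r_k n)^2 / n_k$; the naive bound $n_k \leq n$ would produce an exponent of order $r_k^2 n$, whereas I would instead use the near-balance $n_k \leq (r_k + 1/\sqrt n)\,n \leq \tfrac54 r_k n$ implied by the same sample-size condition, which turns the variance-proxy term into a quantity of order $r_k n$ and makes it comparable to the linear term $t/\max_i\|w_{ki}^2-1\|_{\psi_1} \asymp r_k n$. This yields a per-block two-sided bound of the form $2\exp(-\widetilde{c}_6 r_k n) \leq 2\exp(-\widetilde{c}_6 (\min_k r_k) n)$ with an absolute constant $\widetilde{c}_6$.

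Finally I would take a union bound over the $K$ blocks: the event that some block violates $|n\inverse\|\bw_k\|_2^2 - r_k| > \tfrac12 r_k$ has probability at most $2K\exp(-\widetilde{c}_6 (\min_k r_k) n)$, while on its complement the global deviation is at most $\tfrac12\sum_k r_k(1-\rho_k)$ by the reduction in the first paragraph. Combining these gives exactly the claimed bound \eqref{eqn: wt Lambda' w / n concentration conclusion}. The only real obstacle is the bookkeeping in the third paragraph; everything else is a routine decomposition plus a cited tail inequality.
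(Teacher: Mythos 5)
Your proposal is correct and follows essentially the same route as the paper's proof: the same block decomposition with the $\rho_k$ weights cancelling, the same split of each block's deviation into the deterministic $|n_k/n - r_k|\le 1/\sqrt n \le r_k/4$ part and the stochastic part handled by Bernstein's inequality, and the same key use of $n_k \le (r_k + 1/\sqrt n)n \lesssim r_k n$ to make the exponent linear rather than quadratic in $\min_k r_k$. The only cosmetic difference is that you take the union bound at the end rather than at the start.
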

\begin{proof}[Proof of Lemma \ref{lemma: power pihat_G(h, vecrho, vecr) Concentration}]
On the left-hand side of \eqref{eqn: wt Lambda' w / n concentration conclusion},  
we have
\beq
\labs \frac{\bw\mt \bLambda' \bw}{n} - \sum_{k=1}^K r_k (1-\rho_k) \rabs
>
\frac{1}{2} \sum_{k=1}^K r_k (1-\rho_k),
\eeq
which implies that
\begin{equation}
\sum_{k=1}^K (1-\rho_k)| \bw_k\mt \bw_k - n r_k |
\geq 
\labs \bw\mt\bLambda' \bw  - \sum_{k=1}^K n r_k (1-\rho_k) 
\rabs
> 
\sum_{k=1}^K \frac{1}{2} n r_k (1-\rho_k), 
\label{eqn: wt Lambda' w into sum of (1-rhok)wktwk} 
\end{equation}
by $\bw\mt \bLambda' \bw = \sum_{k=1}^K (1-\rho_k) \bw_k\mt \bw_k$.
Then \eqref{eqn: wt Lambda' w into sum of (1-rhok)wktwk} further implies that there exists $k \in \{1,\dots,K\}$ such that
\begin{equation} 
(1-\rho_k) | \bw_k\mt \bw_k - n r_k | > \frac{1}{2} n r_k (1-\rho_k).  
\label{eqn: wt Lambda' w union in words}
\end{equation}

Based on \eqref{eqn: wt Lambda' w union in words}, we apply the union bound to obtain
\begin{equation}
\p \lbk \labs \frac{\bw\mt \bLambda' \bw}{n} - \sum_{k=1}^n r_k (1-\rho_k) \rabs > \frac{1}{2} \sum_{k=1}^K r_k (1-\rho_k) \rbk \leq \sum_{k=1}^K \p \lbk \labs \bw_k\mt \bw_k - nr_k \rabs > \frac{n}{2}r_k \rbk.
\label{eqn: wt Lambda' w / n concentration union bd}    
\end{equation}
For each $k \in \lbk 1,\dots,K\rbk$, we have
\begin{eqnarray}
& &\p \lbk \labs \bw_k\mt \bw_k - nr_k \rabs > \frac{n}{2}r_k \rbk \nonumber\\
& \leq  & \p \lbk \labs \bw_k\mt \bw_k - n_k \rabs + \labs n_k - n r_k \rabs > \frac{n}{2}r_k \rbk \nonumber \\
& {\leq }& \p \lbk \labs \bw_k\mt \bw_k - n_k \rabs  > \frac{n}{4}r_k \rbk 
\label{eqn: sum nk terms of wi^2 - n rk Bernstein bd step i} 
\\
& {\leq }& 2 \exp \lmk  -\widetilde{c}_6 \min \lsk \frac{n^2 r_k^2}{n_k \|w_1^2-1\|_{\psi_1}^2}, \frac{n r_k}{\|w_1^2 - 1\|_{\psi_1}} \rsk \rmk 
\label{eqn: sum nk terms of wi^2 - n rk Bernstein bd step ii} \\
&{\leq} & 2 \exp \lmk -\widetilde{c}_6 \lsk\min_{1 \leq k \leq K} r_k \rsk n \rmk,
\label{eqn: sum nk terms of wi^2 - n rk Bernstein bd step iii}    
\end{eqnarray}
where \eqref{eqn: sum nk terms of wi^2 - n rk Bernstein bd step i} follows from $|n_k/n - r_k| \leq n^{-1/2}$ and $n(r_k/2 -1/\sqrt{n}) \geq nr_k/4$ resulted from $1/\sqrt{n} \leq r_k/4$, 
\eqref{eqn: sum nk terms of wi^2 - n rk Bernstein bd step ii} follows from the Bernstein’s inequality in Lemma \ref{lemma: Bernstein's inequality},
and \eqref{eqn: sum nk terms of wi^2 - n rk Bernstein bd step iii} follows from $r_k/( n_k/n ) \geq r_k/ ( r_k + 1/\sqrt{n} ) \geq r_k/(  r_k + r_k/4 )=4/5$, with $\|w_1^2 - 1\|_{\psi_1}$ and $4/5$ absorbed by $\widetilde{c}_6$. Combining \eqref{eqn: sum nk terms of wi^2 - n rk Bernstein bd step iii}  with \eqref{eqn: wt Lambda' w / n concentration union bd} completes the proof.
\end{proof}

\begin{lemma}\label{lemma: pwr pihat_G(h, vecrho, vecr) bd dist conc}
Suppose that $\bLambda'$, $\rho_k$, $r_k$, and $\bw$ are from Lemma \ref{lemma: power pihat_G(h, vecrho, vecr) Concentration} and $| n_k/n - r_k | \leq 1/\sqrt{n} \leq r_k/4$ for $k=1,\dots,K$. 
Then we have
\begin{eqnarray}
\mathbb{E} \lmk \frac{ \labs \sum_{k=1}^K \frac{n_k}{n} \rho_k w_k^2  - \sum_{k=1}^K r_k \rho_k w_k^2 \rabs}{ 
\sqrt{\sum_{k=1}^K \rho_k r_k w_k^2 + \sum_{k=1}^K r_k (1-\rho_k)}} \rmk 
&\leq &
\sqrt{\frac{K}{n \lsk \min_{1 \leq k \leq K} r_k \rsk}},
\label{eqn: power pihat_G(h, vecrho, vecr) Expectation bound distribution}  \\
\mathbb{E} \lmk \frac{\labs \frac{\bw\mt \bLambda' \bw}{n} - \sum_{k=1}^K r_k (1-\rho_k) \rabs}{
\sqrt{\sum_{k=1}^K \rho_k r_k w_k^2 + \sum_{k=1}^K r_k (1-\rho_k)} } \rmk 
& \leq &
\frac{20}{\sqrt{3} \widetilde{c}_6 }  \frac{\sqrt{\sum_{k=1}^K r_k(1-\rho_k)}}{\lsk \min_{1 \leq k \leq K} r_k \rsk \sqrt{n}}.
\label{eqn: power pihat_G(h, vecrho, vecr) Expectation bound concentration}
\end{eqnarray}
\end{lemma}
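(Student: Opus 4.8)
The plan is to treat the two bounds separately, after fixing the roles of the quantities involved. Write $w_k = \bone_{n_k}^\T \bw_k / \sqrt{n_k}$ for the standardized coordinate of $\bw_k$ along the constant direction; since $\bw \sim \mathcal{N}(\bzero_n,\bI_n)$, the $w_k$ are independent $\mathcal{N}(0,1)$ with $\mathbb{E}(w_k^2)=1$, and $\sum_{k=1}^K \rho_k r_k w_k^2 + \sum_{k=1}^K r_k(1-\rho_k)$ is precisely the target (large-$n$) expression for $\delta = \bw^\T \bV \bw/n$. Abbreviating $\mu = \sum_{k=1}^K r_k(1-\rho_k)$ and $D = \{\sum_k \rho_k r_k w_k^2 + \mu\}^{1/2}$ for the common denominator, the two numerators measure, respectively, (i) the design-drift error arising from $n_k/n \neq r_k$ and (ii) the concentration error of the within-block sums of squares $\bw_k^\T\bw_k$.

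For the first bound \eqref{eqn: power pihat_G(h, vecrho, vecr) Expectation bound distribution}, the main device is a Cauchy--Schwarz coupling that lets the random $w_k^2$ in the denominator absorb the random $w_k^2$ in the numerator. I would bound the numerator by $\sum_k |n_k/n - r_k|\rho_k w_k^2$ and split each summand as $(|n_k/n - r_k|\sqrt{\rho_k/r_k}\,|w_k|)(\sqrt{\rho_k r_k}\,|w_k|)$; Cauchy--Schwarz then produces a factor $(\sum_k \rho_k r_k w_k^2)^{1/2} \le D$ that cancels, leaving $(\sum_k (n_k/n - r_k)^2 (\rho_k/r_k) w_k^2)^{1/2}$. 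Using $\rho_k \le 1$, Jensen's inequality, and $\mathbb{E}(w_k^2)=1$ converts the expectation into $(\sum_k (n_k/n - r_k)^2/r_k)^{1/2}$, and finally $|n_k/n - r_k| \le n^{-1/2}$ with $r_k \ge \min_k r_k$ yields $\{K/(n\min_k r_k)\}^{1/2}$.

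For the second bound \eqref{eqn: power pihat_G(h, vecrho, vecr) Expectation bound concentration}, I would first drop the nonnegative $\sum_k \rho_k r_k w_k^2$ from the denominator so that $D \ge \mu^{1/2}$ deterministically, reducing the task to bounding $\mu^{-1/2}\,\mathbb{E}|\bw^\T\bLambda'\bw/n - \mu|$. Decompose $\bw^\T\bLambda'\bw/n - \mu = n^{-1}\sum_k (1-\rho_k)(\bw_k^\T\bw_k - n_k) + \{\sum_k(1-\rho_k)n_k/n - \mu\}$. The second (deterministic) term is a bias controlled by $n^{-1/2}\sum_k(1-\rho_k) \le \mu/(\min_k r_k \sqrt n)$. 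For the fluctuation term I would pass to $\mathbb{E}|\bw_k^\T\bw_k - n_k|$ block by block and integrate the Bernstein tail $\p(|\bw_k^\T\bw_k - n_k| > u) \le 2\exp\{-\widetilde{c}_6 \min(u^2/n_k, u)\}$ already invoked in Lemma \ref{lemma: power pihat_G(h, vecrho, vecr) Concentration}; its Gaussian regime $u \lesssim n_k$ integrates to $\lesssim \sqrt{n_k/\widetilde{c}_6}$, which is what injects $\widetilde{c}_6$ and the $\sqrt{n_k}$ scaling into the final constant. Converting $\sqrt{n_k} \lesssim \sqrt{r_k n}$ via $n_k \le \tfrac54 n r_k$ and collecting $\sum_k(1-\rho_k)\sqrt{r_k} \le \mu/\sqrt{\min_k r_k}$ then gives the stated form.

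The \emph{main obstacle} is the fluctuation term in the second bound: the concentration lemma as stated controls only the single threshold $u = \mu/2$, which alone would give an $O(\mu)$ rather than an $O(\mu/\sqrt n)$ $L^1$ bound. The $n^{-1/2}$ rate must instead come from integrating the full sub-exponential tail, whose Gaussian regime supplies the $\sqrt{n_k}$ factor; tracking the constants through this integration and through the $\sqrt{n_k} \to \sqrt{r_k n}$ and $\sum_k(1-\rho_k)\sqrt{r_k}$ reductions is what produces the explicit $20/(\sqrt3\,\widetilde{c}_6)$. By contrast, the first bound is essentially a two-line Cauchy--Schwarz argument once one notices that the random denominator can cancel the random numerator.
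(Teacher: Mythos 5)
Your proposal is correct and follows essentially the same route as the paper: for the first bound, a random-denominator cancellation followed by Jensen's inequality (the paper lower-bounds the denominator by $\{(\min_k r_k)\sum_k\rho_k w_k^2\}^{1/2}$ and upper-bounds the numerator by $n^{-1/2}\sum_k\rho_k w_k^2$, which is your Cauchy--Schwarz step in slightly different clothing), and for the second, the deterministic lower bound $D\geq\{\sum_k r_k(1-\rho_k)\}^{1/2}$ together with an integration of the Bernstein tail of $\bw_k\mt\bw_k-n_k$, exactly as you describe. You also correctly identify the one real subtlety --- that the single-threshold concentration in Lemma \ref{lemma: power pihat_G(h, vecrho, vecr) Concentration} does not suffice and the $n^{-1/2}$ rate must come from integrating the full sub-exponential tail --- which is precisely how the paper obtains its constant.
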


\begin{proof}[Proof of Lemma \ref{lemma: pwr pihat_G(h, vecrho, vecr) bd dist conc}]
For \eqref{eqn: power pihat_G(h, vecrho, vecr) Expectation bound distribution}, if $\rho_1=\dots=\rho_K=0$, the left-hand side is zero.

If there exists $\rho_k >0$, the denominator on the left-hand side is lower bounded by
$\{ ( \min_{1 \leq k \leq K} r_k )$ $\sum_{k=1}^K \rho_k w_k^2 \}^{1/2}$
and the numerator on the left-hand side satisfies
$
| \sum_{k=1}^K $ $( n_k/n ) \rho_k w_k^2  - \sum_{k=1}^K r_k \rho_k w_k^2 |
\leq 
\sum_{k=1}^K  | ( n_k/n ) - r_k | \rho_k w_k^2 
\leq 
n^{-1/2} \sum_{k=1}^K \rho_k w_k^2
$.
Combining the bounds of the numerator and denominator on the left-hand side, we have
$$
\mathrm{LHS} \leq  \frac{\mathbb{E} \sqrt{ \sum_{k=1}^K \rho_k w_k^2}}{\sqrt{n \lsk \min_{1 \leq k \leq K} r_k \rsk}}
\leq \frac{\sqrt{ \sum_{k=1}^K \rho_k \mathbb{E}w_k^2}}{\sqrt{n \lsk \min_{1 \leq k \leq K} r_k \rsk}}
\leq  \sqrt{\frac{K}{n \lsk \min_{1 \leq k \leq K} r_k \rsk}},
$$
where the second inequality follows from Jensen's inequality.

For \eqref{eqn: power pihat_G(h, vecrho, vecr) Expectation bound concentration}, on the left-hand side, the denominator is lower bounded by
$[\sum_{k=1}^K r_k(1-\rho_k)]^{1/2}$ and the numerator satisfies
\beq
\labs \frac{\bw\mt \bLambda' \bw}{n} - \sum_{k=1}^K r_k (1-\rho_k) \rabs 
= \labs \sum_{k=1}^K (1-\rho_k) \frac{\bw_k\mt \bw_k}{n} - \sum_{k=1}^K r_k (1-\rho_k) \rabs 
\leq \sum_{k=1}^K (1-\rho_k)r_k \labs   \frac{\bw_k\mt \bw_k}{n r_k} - 1 \rabs.
\eeq
Combining the bounds of the numerator and denominator on the left-hand side, we have
\begin{equation}
\mathrm{LHS} \leq \sqrt{\sum_{k=1}^K r_k(1-\rho_k)} \max_{1 \leq k \leq K} \mathbb{E} \labs  \frac{\bw_k\mt \bw_k}{n r_k} - 1 \rabs. 
\label{eqn: LHS of E(nmlzd deviation of wt Lambda' w) before integral}    
\end{equation}
It remains to bound $\e | (nr)\inverse \bw_k\mt \bw_k - 1 |$ for $k=1,\dots,K$.
Since $| (nr)\inverse \bw_k\mt \bw_k - 1 | \geq 0$, we have
\begin{eqnarray}
& &\mathbb{E} \labs {(n r_k)\inverse}{\bw_k\mt \bw_k} - 1 \rabs \nonumber\\
&=& 
\int_0^\infty \p \lbk \labs {(n r_k)\inverse}{\bw_k\mt \bw_k} - 1 \rabs > t\rbk \textup{d}t \nonumber\\
&\leq &
\int_0^\infty \p \lbk \labs \bw_k\mt \bw_k - n_k \rabs + \labs n_k - n r_k \rabs > t n r_k \rbk \textup{d}t \nonumber\\
&\leq & 
\int_0^\infty \p \lbk \labs \bw_k\mt \bw_k - n_k \rabs >  2\inverse t n r_k \rbk \textup{d}t
+
\int_0^\infty \p \lbk \labs n_k - n r_k \rabs > 2\inverse t n r_k \rbk \textup{d}t.
\label{eqn: bd of E | wkt wk / n rk - 1 | by integral}    
\end{eqnarray}

By Lemma \ref{lemma: Bernstein's inequality}, the first term in \eqref{eqn: bd of E | wkt wk / n rk - 1 | by integral} is bounded by
\begin{eqnarray}
& &\int_0^\infty \p \{ | \sum_{n_k\ \mathrm{terms}} (w_i^2-1) | >  2\inverse t n r_k \} \textup{d}t \nonumber\\
&\leq &
2 \int_0^\infty \exp[ - \widetilde{c}_6 n \min ( t^2 r_k^2, t r_k )] \textup{d}t 
\label{eqn: eqn: bd of E | wkt wk / n rk - 1 | by integral 1st c6tilde} \\
& \leq &
2 \int_0^{1/r_k} \exp( - \widetilde{c}_6 n t^2 r_k^2 ) \textup{d}t
+
2 \int_{1/r_k}^{\infty} \exp( - \widetilde{c}_6 n t r_k ) \textup{d}t \nonumber \\
& \leq &
\frac{\sqrt{\pi}}{ r_k \sqrt{\widetilde{c}_6} \sqrt{n}}  
+
\frac{2}{ r_k \widetilde{c}_6  n }
\leq
\frac{4}{r_k \widetilde{c}_6 \sqrt{n}},
\label{eqn: eqn: bd of E | wkt wk / n rk - 1 | by integral 1st}   
\end{eqnarray}
where in the last inequality, we set $\widetilde{c}_6 < 1$ for a simpler upper bound, which does not affect the validity of the tail bound in \eqref{eqn: eqn: bd of E | wkt wk / n rk - 1 | by integral 1st c6tilde}.

The second term in \eqref{eqn: bd of E | wkt wk / n rk - 1 | by integral} is bounded by
\begin{equation}
\int_0^\infty \p \lbk \labs\frac{n_k}{n} - r_k\rabs > \frac{t r_k}{2} \rbk \textup{d}t
\leq 
\int_0^{\frac{2}{\sqrt{n} r_k}} \p \lbk \labs \frac{n_k}{n} - r_k \rabs > \frac{t r_k}{2} \rbk \textup{d}t
\leq
\frac{2}{\sqrt{n} r_k}
\leq 
\frac{2}{\sqrt{n} r_k \widetilde{c}_6},
\label{eqn: eqn: bd of E | wkt wk / n rk - 1 | by integral 2ed}    
\end{equation}
where the first inequality follows from $|n_k/n - r_k| \leq 1/\sqrt{n}$.

Combining \eqref{eqn: bd of E | wkt wk / n rk - 1 | by integral}, \eqref{eqn: eqn: bd of E | wkt wk / n rk - 1 | by integral 1st}, and \eqref{eqn: eqn: bd of E | wkt wk / n rk - 1 | by integral 2ed}, we have
\begin{equation}
\max_{1 \leq k \leq K} \mathbb{E} \labs  \frac{\bw_k\mt \bw_k}{n r_k} - 1 \rabs {\leq}  \frac{6}{   \widetilde{c}_6 \sqrt{n} \lsk \min_{1 \leq k \leq K} r_k \rsk}.
\label{eqn: eqn: LHS of E(nmlzd deviation of wt Lambda' w) after integral}   
\end{equation}

Plugging \eqref{eqn: eqn: LHS of E(nmlzd deviation of wt Lambda' w) after integral} into \eqref{eqn: LHS of E(nmlzd deviation of wt Lambda' w) before integral} completes the proof.
\end{proof}

\begin{lemma}\label{lemma: power pihat_G(h, vecrho, vecr) I2 I3}
If $n \geq e^2$ and $w_1,\dots,w_K$ follow i.i.d. $\mathcal{N}(0,1)$, given $0 \leq \rho_K \leq \dots \leq \rho_1 \leq 1-c_{\mathrm{min}}$, $c_{\mathrm{min}} \in (0,1)$ and $r_k$, $n_k$ from Lemma \ref{lemma: power pihat_G(h, vecrho, vecr) Concentration}, we have 
\begin{eqnarray}
\p \lbk \sum_{k=1}^K r_k \rho_k w_k^2 - \sum_{k=1}^K r_k \rho_k > \frac{\log n }{2 \widetilde{c}_6} \rbk 
& \leq & 
\frac{1}{\sqrt{n}},
\label{eqn: rkrhokwk2 concentration} \\
\p \lbk \sum_{k=1}^K \frac{n_k}{n} \rho_k w_k^2 - \sum_{k=1}^K \frac{n_k}{n} \rho_k > \frac{\log n }{2  \widetilde{c}_6 } \rbk 
& \leq & 
\frac{1}{\sqrt{n}}.
\label{eqn: nkrhokwk2/n concentration}   
\end{eqnarray}
\end{lemma}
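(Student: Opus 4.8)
The plan is to recognize both tail bounds as instances of Bernstein's inequality (Lemma \ref{lemma: Bernstein's inequality}) applied to a weighted sum of independent, centered, sub-exponential random variables. Writing $a_k$ for the weight $r_k$ in the first inequality and $n_k/n$ in the second, the quantity inside each probability is $\sum_{k=1}^K a_k \rho_k w_k^2 - \sum_{k=1}^K a_k \rho_k = \sum_{k=1}^K a_k \rho_k (w_k^2 - 1)$. Since $w_1,\dots,w_K$ are i.i.d.\ $\mathcal{N}(0,1)$, the summands $\xi_k := a_k \rho_k (w_k^2 - 1)$ are independent, mean zero, and sub-exponential, so Lemma \ref{lemma: Bernstein's inequality} is directly applicable once the relevant norms are controlled.

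First I would bound the two quantities appearing in Bernstein's inequality. Setting $B := \|w_1^2 - 1\|_{\psi_1}$, an absolute constant depending only on $K_w$, each summand has $\|\xi_k\|_{\psi_1} = a_k \rho_k B \leq a_k B$, using $\rho_k \leq 1 - c_{\mathrm{min}} \leq 1$. The key observation is that both weight families satisfy $0 \leq a_k \leq 1$ and $\sum_{k=1}^K a_k = 1$ (for $a_k = r_k$ because $\sum_k r_k = 1$, and for $a_k = n_k/n$ because $\sum_k n_k = n$). Hence $\max_{1 \leq k \leq K} \|\xi_k\|_{\psi_1} \leq B$ and, using $a_k^2 \leq a_k$, $\sum_{k=1}^K \|\xi_k\|_{\psi_1}^2 \leq B^2 \sum_{k=1}^K a_k^2 \leq B^2 \sum_{k=1}^K a_k = B^2$. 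Both bounds are uniform in $K$ and in the particular weights, which is what makes the argument identical for the two inequalities.

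Next I would apply Lemma \ref{lemma: Bernstein's inequality} with threshold $t = (\log n)/(2\widetilde{c}_6)$. Absorbing $B$ into the constant, the lemma gives $\p(\sum_{k=1}^K \xi_k > t) \leq \exp[-\widetilde{c}_6 \min(t^2, t)]$ for all $t \geq 0$, where $\widetilde{c}_6$ is taken to be at most $1$ and to coincide with the resulting Bernstein constant. The role of the hypothesis $n \geq e^2$ is precisely to guarantee $\log n \geq 2$, so that $t \geq 1/\widetilde{c}_6 \geq 1$ and therefore $\min(t^2, t) = t$. Substituting yields $\exp(-\widetilde{c}_6 t) = \exp\{-(\log n)/2\} = n^{-1/2}$, establishing the first inequality; the second follows by the same computation with $a_k = n_k/n$.

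The argument is essentially a single Bernstein application, so there is no deep obstacle. The only points that require care are (i) verifying that the variance proxy $\sum_{k=1}^K \|\xi_k\|_{\psi_1}^2$ is bounded by an absolute constant independent of $K$ and the weights, which hinges on the elementary inequality $a_k^2 \leq a_k$ together with $\sum_k a_k = 1$; and (ii) choosing the threshold $t = (\log n)/(2\widetilde{c}_6)$ so that $n \geq e^2$ forces the linear branch of the Bernstein minimum to be active, which is what makes the exponent come out to exactly $(\log n)/2$ and hence the bound to exactly $n^{-1/2}$.
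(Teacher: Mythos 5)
Your proof is correct and follows essentially the same route as the paper's: a single application of Bernstein's inequality to $\sum_k a_k\rho_k(w_k^2-1)$, bounding the variance proxy by an absolute constant via $a_k^2\rho_k^2\le a_k\rho_k$ and $\sum_k a_k = 1$, and using $n\ge e^2$ (with $\widetilde{c}_6\le 1$) to force the linear branch of the Bernstein minimum so the exponent is exactly $(\log n)/2$. The only cosmetic difference is that the paper first disposes of the degenerate case $\rho_1=0$ (where the probability is trivially zero) before invoking Bernstein, a one-line caveat worth adding so the ratio in the Bernstein bound is well defined.
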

\begin{proof}[Proof of Lemma \ref{lemma: power pihat_G(h, vecrho, vecr) I2 I3}]
For \eqref{eqn: rkrhokwk2 concentration}, if $\rho_1=0$, the left-hand side is zero. 
If $\rho_1>0$,  then 
\begin{eqnarray}
&&\p \lbk \sum_{k=1}^K r_k \rho_k w_k^2 - \sum_{k=1}^K r_k \rho_k > \frac{\log n }{2 \widetilde{c}_6 } \rbk \\
&\leq &\exp \lbk -\widetilde{c}_6 \min \lmk \frac{\lsk \frac{\log n}{2 \widetilde{c}_6} \rsk^2}{\sum_{k=1}^K \rho_k^2 r_k^2}, \frac{\frac{\log n}{2 \widetilde{c}_6}}{\max_{\footnotesize 1 \leq k \leq K} ( \rho_k r_k ) }  \rmk  \rbk \label{eqn: sum rk rhok wk^2 - sum rk rhok concentration Bernstein step 0}
\\
& {\leq} & \exp \lbk  -\widetilde{c}_6 \min \lmk \lsk \frac{\log n}{2  \widetilde{c}_6 } \rsk^2, \lsk \frac{\log n}{2 \widetilde{c}_6 } \rsk  \rmk \rbk 
\label{eqn: sum rk rhok wk^2 - sum rk rhok concentration Bernstein step i}
\\
& {=} & \exp \lsk -\widetilde{c}_6 \frac{\log n}{2 \widetilde{c}_6} \rsk = \frac{1}{\sqrt{n}},
\label{eqn: sum rk rhok wk^2 - sum rk rhok concentration Bernstein step ii}    
\end{eqnarray} 
where  \eqref{eqn: sum rk rhok wk^2 - sum rk rhok concentration Bernstein step 0} follows from Lemma \ref{lemma: Bernstein's inequality},
\eqref{eqn: sum rk rhok wk^2 - sum rk rhok concentration Bernstein step i} follows from $\sum_{k=1}^K \rho_k^2  r_k^2 \leq \sum_{k=1}^K \rho_k r_k \leq \sum_{k=1}^K r_k =1$ and $\max_{1 \leq k \leq K} \rho_k r_k \leq 1$. 
In \eqref{eqn: sum rk rhok wk^2 - sum rk rhok concentration Bernstein step ii}, we set $\widetilde{c}_6 < 1$ which does affect the validity of the tail bound in \eqref{eqn: sum rk rhok wk^2 - sum rk rhok concentration Bernstein step 0}.
Since we assumed $n \geq e^2$, $\widetilde{c}_6 < 1$ implies $n \geq e^{2\widetilde{c}_6}$. 
Hence we have $[ \log n / (2 \widetilde{c}_6) ]^2 \geq \log n / (2 \widetilde{c}_6)$.

With similar arguments, we can prove \eqref{eqn: nkrhokwk2/n concentration}.    
\end{proof}

\setcounter{equation}{0}
 \makeatletter
 \renewcommand{\theequation}{B.\@arabic\c@equation}
\makeatother

\setcounter{lemma}{0}
 \makeatletter
\renewcommand{\thelemma}{B.\@arabic\c@lemma}
\makeatother

\section{Proof of Theorem \ref{Thm: subG coverage prob}}
\label{sec: pf of all subG coverage prob}

Before giving the formal proof, we first provide some intuition for Theorem \ref{Thm: subG coverage prob}.
Recall that $\bv = \bepsilon / \| \bepsilon \|_2 = \bV^{1/2} \bw / \sqrt{\bw\mt \bV \bw}$ defined in Lemma \ref{lemma: self-normlz epsilon sub-G property} of the main paper.
We also define
\begin{equation}
    \bu_j = \frac{\bSigma^{-1/2} \bfe_j}{\sqrt{\bfe_j\mt \bSigma\inverse \bfe_j}} \in \mathbb{R}^d,
\label{eqn:u_j} 
\end{equation}
for $j=1,\dots,d$ such that $\| \bu_j \|_2 = 1$. 
Then the $t$ statistic $T_j$ can be rewritten as
\begin{eqnarray}
T_j 
&=& \frac{\widehat{\beta}_j - \beta_j}{L_j}
\nonumber \\
&=& \frac{\bfe_j\mt \lsk \bX\mt\bX\rsk\inverse \bX\mt \bepsilon}{\sqrt{\frac{\bepsilon\mt \lsk \bI_n -\bP_{\bX} \rsk \bepsilon}{n-d}} \sqrt{\bfe_j\mt \lsk \bX\mt\bX \rsk\inverse \bfe_j}} \nonumber \\
& {=} & \frac{\sqrt{n-d}\cdot \bfe_j\mt \bSigma^{-1/2} \lsk \bZ\mt\bZ \rsk\inverse \bZ\mt \bV^{1/2} \bw }{\sqrt{\bfe_j\mt \bSigma^{-1/2} \lsk \bZ\mt\bZ \rsk\inverse \bSigma^{-1/2} \bfe_j } \sqrt{\bw\mt \bV^{1/2} \lsk \bI_n-\bP_{\bZ} \rsk \bV^{1/2} \bw}}
\label{eqn: decomp of Lj^-1 ( hat(beta)_j - beta_j ) step i}
\\
& {=} & \bu_j\mt \lsk \bZ\mt \bZ \rsk\inverse \bZ\mt \bv \frac{\sqrt{n-d} }{\sqrt{\bu_j\mt \lsk \bZ\mt \bZ \rsk\inverse \bu_j}} \frac{1}{\sqrt{\bv\mt \lsk \bI_n-\bP_{\bZ} \rsk \bv}},
\label{eqn: decomp of Lj^-1 ( hat(beta)_j - beta_j ) step ii}
\end{eqnarray}
where \eqref{eqn: decomp of Lj^-1 ( hat(beta)_j - beta_j ) step i} follows from Assumption \ref{assumption::regularity-conditions} (i) and (ii).
If $n$ is large compared with $d$, we have $(\bZ\mt \bZ)\inverse \approx n\inverse \bI_d$, $[ \bu_j\mt (\bZ\mt\bZ)\inverse \bu_j]^{-1/2} \approx n^{-1/2}$, and $\bv\mt (\bI_n - \bP_{\bZ})\bv \approx 1$.

Then from \eqref{eqn: decomp of Lj^-1 ( hat(beta)_j - beta_j ) step ii}, we have 
\begin{equation}
T_j \approx \bu_j\mt \bZ\mt \bv = \sum_{i=1}^n \lsk \bu_j\mt \bz_i \rsk v_i, 
\label{eqn: Tj approx sum of ujtzi vi}
\end{equation}
where $\bz_i \in \mathbb{R}^d$ is the $i$th row of $\bZ$.
By Lemma \ref{lemma: self-normlz epsilon sub-G property}, the self-normalized quantities $v_i = \epsilon_i / \| \bepsilon \|_2$ for $i=1,\dots,n$ are approximately $n^{-1/2}$, even if  $\bepsilon$ has an unknown correlation structure.
Hence, $T_j$ in \eqref{eqn: Tj approx sum of ujtzi vi} is approximately a sum of independent elements $\bu_j\mt \bz_i$ for $i=1,\dots,n$, with weights of $n^{-1/2}$.
Thus, $\bu_j\mt \bZ\mt \bv$ in \eqref{eqn: Tj approx sum of ujtzi vi} can be treated as an asymptotically normal approximation for $T_j$.
Unlike the traditional approach, the $n^{-1/2}$ weight is provided by the self-normalized quantity $v_i$, which is robust to correlated errors.

The following lemma depicts different pieces of the Berry--Esseen bound in Theorem \ref{Thm: subG coverage prob}.

\begin{lemma}
Under Assumption \ref{assumption::regularity-conditions}, for any $\eta > 0$ and $t \in \mathbb{R}$, we have
\begin{equation}
\sup_{t \in \mathbb{R}}\labs \P \lbk T_j \leq  t \rbk - \Phi(t) \rabs \leq \P \lbk \mathcal{E}_\eta \rbk + \Delta + \frac{\eta}{\sqrt{2 \pi}}  ,
\label{eqn: berry esseen bd use E delta Delta Phi AC together}
\end{equation}
where 
\begin{eqnarray}
\mathcal{E}_{\eta} &=& \lbk \labs T_j -\bu_j\mt \bZ\mt \bv \rabs \geq \eta \rbk, \\
\label{eqn: def of berry esseen bd E delta}
\Delta &=& \sup_{t \in \mathbb{R}} \labs \P \lbk \bu_j\mt \bZ\mt \bv \leq t \rbk - \Phi(t) \rabs,
\label{eqn: def of berry esseen bd Delta}
\end{eqnarray}
and $\bv = \| \bepsilon \|_2^{-1} \bepsilon = (\bw\mt\bV\bw)^{-1/2} \bV^{1/2} \bw$
is defined in Lemma \ref{lemma: self-normlz epsilon sub-G property}.
\label{lemma: BEB decomposition}
\end{lemma}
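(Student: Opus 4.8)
The plan is to regard $W := \bu_j\mt \bZ\mt \bv$ as a surrogate for $T_j$ whose law is already close to normal (this closeness being exactly the quantity $\Delta$), and to transfer that normal approximation from $W$ to $T_j$ by a standard smoothing/sandwiching argument. The argument pays a price $\P(\mathcal{E}_\eta)$ for the event on which $T_j$ and $W$ differ by $\eta$ or more, and a price $\eta/\sqrt{2\pi}$ for the Lipschitz modulus of $\Phi$. Two elementary facts will be used repeatedly: (i) on the complementary event $\mathcal{E}_\eta^c=\{|T_j-W|<\eta\}$ the events $\{T_j\le t\}$ and $\{W\le t\pm\eta\}$ are nested in a definite direction; and (ii) the standard normal density is bounded by $(2\pi)^{-1/2}$, so that $\Phi(t+\eta)\le \Phi(t)+\eta/\sqrt{2\pi}$ and $\Phi(t-\eta)\ge \Phi(t)-\eta/\sqrt{2\pi}$ for every $t$.

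For the upper bound I would first split $\P(T_j\le t)\le \P(T_j\le t,\ \mathcal{E}_\eta^c)+\P(\mathcal{E}_\eta)$. On $\mathcal{E}_\eta^c$ the inequality $T_j\le t$ forces $W< T_j+\eta\le t+\eta$, whence $\P(T_j\le t,\ \mathcal{E}_\eta^c)\le \P(W\le t+\eta)$. Bounding the last probability by $\Phi(t+\eta)+\Delta$ via the definition of $\Delta$, and then by $\Phi(t)+\eta/\sqrt{2\pi}+\Delta$ via the Lipschitz property, gives $\P(T_j\le t)-\Phi(t)\le \P(\mathcal{E}_\eta)+\Delta+\eta/\sqrt{2\pi}$, uniformly in $t$.

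For the matching lower bound I would argue symmetrically, starting from $\P(W\le t-\eta)\le \P(W\le t-\eta,\ \mathcal{E}_\eta^c)+\P(\mathcal{E}_\eta)$ and noting that on $\mathcal{E}_\eta^c$ the inequality $W\le t-\eta$ forces $T_j<W+\eta\le t$, so $\P(W\le t-\eta,\ \mathcal{E}_\eta^c)\le \P(T_j\le t)$. Rearranging and applying the definition of $\Delta$ and the Lipschitz bound yields $\P(T_j\le t)\ge \Phi(t)-\eta/\sqrt{2\pi}-\Delta-\P(\mathcal{E}_\eta)$, i.e. $\Phi(t)-\P(T_j\le t)\le \P(\mathcal{E}_\eta)+\Delta+\eta/\sqrt{2\pi}$. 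Combining the two one-sided bounds and taking the supremum over $t$ (the right-hand side being free of $t$) establishes \eqref{eqn: berry esseen bd use E delta Delta Phi AC together}. There is no substantive obstacle in this lemma itself: it is a deterministic decomposition, and the only points requiring care are the directions of the nested inequalities on $\mathcal{E}_\eta^c$ and the consistent use of the Lipschitz bound on $\Phi$. The genuine difficulty is deferred to the subsequent lemmas, which must show that $\P(\mathcal{E}_\eta)$ and $\Delta$ can both be made small for a suitable choice of $\eta$.
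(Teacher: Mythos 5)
Your proposal is correct and follows essentially the same route as the paper's proof: the same sandwiching of $\{T_j\le t\}$ between $\{\bu_j\mt\bZ\mt\bv\le t\pm\eta\}$ on the event $\mathcal{E}_\eta^c$, the same use of the definition of $\Delta$, and the same Lipschitz bound $\sup_x\Phi'(x)=(2\pi)^{-1/2}$ to absorb the shift by $\eta$. No gaps; the directions of the nested inequalities are handled correctly in both one-sided bounds.
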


The upper bound \eqref{eqn: berry esseen bd use E delta Delta Phi AC together} has three components.  
The $\p \{ \mathcal{E}_\eta \}$ term is the approximation error of using $\bu_j\mt \bZ\mt \bv$ to approximate $T_j$.
The $\Delta$ term is the Berry--Esseen bound of using $\bu_j\mt \bZ\mt \bv$ to approximate $\mathcal{N}(0,1)$.
The $\eta / \sqrt{2 \pi}$ term arises from the approximation errors passing from $\eta$ to $\mathcal{N}(0,1)$, which are $\Phi(t + \eta) - \Phi (t)$ and $\Phi (t) - \Phi(t - \eta)$.

\begin{proof}[Proof of Lemma \ref{lemma: BEB decomposition}]

For any $t \in \mathbb{R}$, we have 
\begin{equation}
\P \lbk \frac{\widehat{\beta}_j-\beta_j}{L_j}\leq  t \rbk \leq  \P \lbk \mathcal{E}_\eta \rbk + \P \lbk \frac{\widehat{\beta}_j-\beta_j}{L_j} \leq t\quad \mathrm{and} \quad \mathcal{E}_\eta^c \rbk 
{\leq}  \P \lbk \mathcal{E}_\eta \rbk + \P \lbk \bu_j\mt\bZ\mt\bv \leq t+\eta \rbk,
\label{eqn: berry esseen bd use E delta upper bd}
\end{equation}
where the last step in \eqref{eqn: berry esseen bd use E delta upper bd} follows from the fact that on the event $\mathcal{E}_\eta^c$, we have
\beq
\bu_j\mt \bZ\mt\bv 
\leq  \labs \bu_j\mt \bZ\mt \bv - \frac{\widehat{\beta}_j-\beta_j}{L_j} \rabs + \frac{\widehat{\beta}_j-\beta_j}{L_j}
\leq \eta + t.
\eeq
Then by the definition of $\Delta$ in \eqref{eqn: def of berry esseen bd Delta}, we have
\begin{eqnarray}
\P \lbk \bu_j\mt \bZ\mt\bv \leq  t+\eta \rbk 
\leq  \labs \P \lbk \bu_j\mt \bZ\mt\bv \leq  t+\eta \rbk - \Phi(t+\eta) \rabs + \Phi(t+\eta) 
\leq  \Delta + \Phi(t+\eta).
\label{eqn: berry esseen bd use Delta upper bd}
\end{eqnarray}
Combining \eqref{eqn: berry esseen bd use E delta upper bd} and \eqref{eqn: berry esseen bd use Delta upper bd}, we have
\begin{eqnarray}
& & \P \lbk \frac{\widehat{\beta}_j-\beta_j}{L_j} 
 \leq   t \rbk - \Phi(t) \nonumber\\
& {\leq} &
\P \lbk \mathcal{E}_\eta \rbk + \P \lbk \bu_j\mt\bZ\mt\bv \leq t+\eta \rbk - \Phi(t) 
\label{eqn: berry esseen bd use Phi AC upper bd step i}\\
& {\leq} & 
\P \lbk \mathcal{E}_\eta \rbk + \Delta + \Phi(t+\eta)-\Phi(t) 
\label{eqn: berry esseen bd use Phi AC upper bd step ii}\\
& \leq & 
\P \lbk \mathcal{E}_\eta \rbk + \Delta + \frac{\eta}{\sqrt{2 \pi}},
\label{eqn: berry esseen bd use Phi AC upper bd step iii}
\end{eqnarray}
where \eqref{eqn: berry esseen bd use Phi AC upper bd step i} follows from \eqref{eqn: berry esseen bd use E delta upper bd},
\eqref{eqn: berry esseen bd use Phi AC upper bd step ii} follows from \eqref{eqn: berry esseen bd use Delta upper bd}, and
\eqref{eqn: berry esseen bd use Phi AC upper bd step iii} follows from the mean value theorem with $\Phi'(x) \leq (2 \pi)^{-1/2}$ for all $x \in \mathbb{R}$.

Similar to \eqref{eqn: berry esseen bd use E delta upper bd}, we have
\begin{equation}
\P \lbk \bu_j\mt \bZ\mt \bv \leq  t-\eta \rbk \leq  \P \lbk \mathcal{E}_\eta \rbk + \P \lbk \bu_j\mt \bZ\mt \bv \leq  t-\eta \quad \mathrm{and} \quad \mathcal{E}_\eta^c \rbk  
{\leq}  \P \lbk \mathcal{E}_\eta \rbk + \P \lbk \frac{\widehat{\beta}_j-\beta_j}{L_j}\leq  t \rbk,
\label{eqn: berry esseen bd use E delta lower bd}
\end{equation}
where the last step in \eqref{eqn: berry esseen bd use E delta lower bd} follows from the fact that on the even $\mathcal{E}_\eta^c$, we have
\beq
\frac{\widehat{\beta}_j-\beta_j}{L_j}
\leq  \labs \frac{\widehat{\beta}_j-\beta_j}{L_j} - \bu_j\mt \bZ\mt \bv \rabs + \bu_j\mt\bZ\mt\bv
\leq \eta + t-\eta =t.
\eeq
Similar to \eqref{eqn: berry esseen bd use Delta upper bd}, we have
\begin{eqnarray}
\P \lbk \bu_j\mt \bZ\mt\bv \leq  t-\eta \rbk 
& = & \Phi(t-\eta) + \P \lbk \bu_j\mt \bZ\mt\bv \leq  t-\eta \rbk - \Phi(t-\eta) \nonumber\\
& \geq &  \Phi(t-\eta)  -\labs \P \lbk \bu_j\mt \bZ\mt\bv \leq  t-\eta \rbk - \Phi(t-\eta) \rabs  \nonumber\\
& \geq & \Phi(t-\eta) -\Delta.
\label{eqn: berry esseen bd use Delta lower bd}
\end{eqnarray}
Similar to \eqref{eqn: berry esseen bd use Phi AC upper bd step i}--\eqref{eqn: berry esseen bd use Phi AC upper bd step iii}, we have
\begin{eqnarray}
& & \P \lbk \frac{\widehat{\beta}_j-\beta_j}{L_j}\leq  t \rbk - \Phi(t) \nonumber\\
& {\geq} &
-\P \lbk \mathcal{E}_\eta \rbk + \P \lbk \bu_j\mt\bZ\mt\bv \leq t-\eta \rbk - \Phi(t) \label{eqn: berry esseen bd use Phi AC lower bd step i} \\
& {\geq} & 
-\P \lbk \mathcal{E}_\eta \rbk - \Delta + \Phi(t-\eta)-\Phi(t) 
\label{eqn: berry esseen bd use Phi AC lower bd step ii} \\
& \geq &
-\P \lbk \mathcal{E}_\eta \rbk - \Delta -  \frac{\eta}{\sqrt{2 \pi}},
\label{eqn: berry esseen bd use Phi AC lower bd step iii}
\end{eqnarray}
where \eqref{eqn: berry esseen bd use Phi AC lower bd step i} follows from \eqref{eqn: berry esseen bd use E delta lower bd}, \eqref{eqn: berry esseen bd use Phi AC lower bd step ii} follows from \eqref{eqn: berry esseen bd use Delta lower bd}, and \eqref{eqn: berry esseen bd use Phi AC lower bd step iii} follows from the mean value theorem.

Combining \eqref{eqn: berry esseen bd use Phi AC upper bd step iii} and \eqref{eqn: berry esseen bd use Phi AC lower bd step iii}, we have \eqref{eqn: berry esseen bd use E delta Delta Phi AC together}.
\end{proof}

\begin{proof}[Proof of Theorem \ref{Thm: subG coverage prob}] We prove Theorem \ref{Thm: subG coverage prob} by bounding $\Delta$ and $\p \{ \mathcal{E}_\eta \}$, $\eta/\sqrt{2\pi}$ in \eqref{eqn: berry esseen bd use E delta Delta Phi AC together} separately.

\begin{enumerate}[label=\textit{Step \Roman*}]
\item[{\bf Step 1.}]
Bound $\Delta$ in \eqref{eqn: def of berry esseen bd Delta}. 

In \eqref{eqn: Tj approx sum of ujtzi vi}, for any given $\bv$, the $ ( \bu_j\mt \bz_i )  v_i$ terms are independent with 
\beq
\mathbb{E} [ ( \bu_j\mt \bz_i ) v_i ] = 0, \quad
\cov [  ( \bu_j\mt \bz_i ) v_i ] = v_i^2, \quad
\mathbb{E}[  | \bu_j\mt \bz_i v_i|^3 ] 
=
|v_i|^3 \mathbb{E}[ | \bu_j\mt \bz_i |^3 ]
\leq 
C_1 |v_i|^3,
\eeq
where the upper bound for the third moment follows from Lemma \ref{lemma: sum of indep sub-Gaussians}:
\begin{equation}
{\|\bu_j\mt\bz_i \|^2_{\psi_2}} 
\leq
C \sum_{k=1}^d \|u_{kj} z_{ik}\|^2_{\psi_2}
= 
C \sum_{k=1}^d u_{kj}^2 \|z_{ik}\|^2_{\psi_2}
\leq 
C K_z^2 \sum_{k=1}^d u_{kj}^2 
= 
C K_z^2 = C_1,
\label{eqn: uj^T z_((i)) subG norm bd}    
\end{equation}
with $K_z$ absorbed by $C_1$ in the last step. 

So applying Berry--Esseen bound conditional on $\bv$ yields that,
\begin{eqnarray}
\Delta 
&{=}&
\sup_{t \in \mathbb{R}} \labs \mathbb{E}_{\bv} \lmk \mathbb{E}_{\bZ} \mathcal{I} \lbk \bu_j\mt \bZ\mt \bv \leq t \rbk \rmk - \Phi(t) \rabs 
\label{eqn: berry esseen bd control Delta without self-normalized lemma step 0}
\\
&=& 
\sup_{t \in \mathbb{R}} \labs \mathbb{E}_{\bv} \lmk \mathbb{E}_{\bZ} \mathcal{I} \lbk \bu_j\mt \bZ\mt \bv \leq t \rbk - \Phi(t) \rmk \rabs\nonumber \\
& \leq & \mathbb{E}_{\bv} \sup_{t \in \mathbb{R}} \labs \mathbb{E}_{\bZ} \mathcal{I} \lbk \bu_j\mt \bZ\mt \bv \leq t \rbk - \Phi(t) \rabs 
\label{eqn: berry esseen bd control Delta without self-normalized lemma step i}
\\
& \leq  & C_1 \mathbb{E}_{\bv} \lsk \sum_{i=1}^n |v_i|^3\rsk,
\label{eqn: berry esseen bd control Delta without self-normalized lemma step ii}
\end{eqnarray}
where \eqref{eqn: berry esseen bd control Delta without self-normalized lemma step 0} follows from $\bv \Perp \bZ$ and the inner expectation $\e_{\bZ}$ is taken with respect to $\bZ$ while treating ${\bv}$ as a constant,
\eqref{eqn: berry esseen bd control Delta without self-normalized lemma step i} follows from Jensen's inequality,
and \eqref{eqn: berry esseen bd control Delta without self-normalized lemma step ii} follows from the Berry--Esseen bound for non-identically distributed summands and $\sum_{i=1}^n v_i^2=1$. 
It remains to bound $\mathbb{E}_{\bv}(  \sum_{i=1}^n |v_i|^3 )$ in \eqref{eqn: berry esseen bd control Delta without self-normalized lemma step ii}.

From Lemma \ref{lemma: self-normlz epsilon sub-G property},  
$\| v_i \|_{\psi_2} \lesssim [n \lambda_{\min}(\bV)]^{-1/2}$ so that $\mathbb{E} (|v_i|^3) \lesssim [n \lambda_{\min}(\bV)]^{-3/2}$
and  $\mathbb{E} ( \sum_{i=1}^n |v_i|^3 ) \lesssim [\lambda_{\min}(\bV)]^{-3/2} n^{-1/2}$. Therefore, we have
\begin{equation}
    \Delta \leq C_1  \lmk \lambda_{\min}(\bV) \rmk^{-3/2} n^{-1/2}.
\label{eqn: berry esseen bd control Delta with self-normalized lemma}
\end{equation}

\item[{\bf Step 2.}] Bound $\p\{ \mathcal{E}_\eta \}$ and $\eta / \sqrt{2 \pi}$ together.

The $\eta$ in $\mathcal{E}_\eta$ is the approximation error of $\bu_j\mt \bZ\mt \bv$ for $(\widehat{\beta}_j - \beta_j)/L_j$.
This approximation error also appears in $\eta / \sqrt{2 \pi}$ originated from $\Phi(t + \eta) - \Phi(t)$ in \eqref{eqn: berry esseen bd use Phi AC upper bd step ii} and $\Phi(t) - \Phi(t - \eta)$ in \eqref{eqn: berry esseen bd use Phi AC lower bd step ii}.
So we will find $\eta$ such that both $\p\{ \mathcal{E}_\eta \}$ and $\eta / \sqrt{2 \pi}$ approach zero with the desired rate $\max(d, \log n)/\sqrt{n}$.

Finding such $\eta$ is started by decomposing $\p \{ \mathcal{E}_\eta \}$ as 
\begin{equation}
\p \{ \mathcal{E}_\eta \} =  p \{ \mathcal{E}_\eta \cap (\mathcal{E}_1 \cup \mathcal{E}_2) \} + \p \{ \mathcal{E}_\eta \cap \mathcal{E}_1^c \cap \mathcal{E}_2^c\} \leq 
 \p \{\mathcal{E}_1\} + \p \{ \mathcal{E}_2 \} + \p \{ \mathcal{E}_\eta \cap \mathcal{E}_1^c \cap \mathcal{E}_2^c\},
\label{eqn: subG coverage distance to normal proxy with widetilde Id PE1 PE2 PEetaE1E2C}    
\end{equation}
where 
\beq
\mathcal{E}_1 &=& \lbk \lambda_{\max}\lmk \lsk \bZ\mt\bZ \rsk\inverse \rmk > ( \sqrt{n}-\widetilde{C}\sqrt{d}-\alpha )^{-2} \rbk 
 \cup  
\lbk \lambda_{\min}\lmk \lsk \bZ\mt\bZ \rsk\inverse \rmk < ( \sqrt{n}+\widetilde{C}\sqrt{d}+\alpha )^{-2}\rbk,\\
\mathcal{E}_2
&=&
\lbk  \bv\mt \bZ\bZ\mt\bv > d+\kappa \rbk.
\eeq
The following proofs to find $\eta$ include 
(a) finding $\alpha$ in $\mathcal{E}_1$ and $\kappa$ in $\mathcal{E}_2$ such that $\p \{ \mathcal{E}_1 \}$ and $\p \{ \mathcal{E}_2 \}$ approach 0 with rate $n^{-1/2}$, 
and (b) finding $\eta \asymp \max (d, \log n) / \sqrt{n}$ with $\alpha$ and $\kappa$ from (a) such that 
$\p\{ \mathcal{E}_\eta \cap \mathcal{E}_1^c \cap \mathcal{E}_2^c \} = 0$:

\begin{enumerate}
\item 
From Lemmas \ref{lemma: concentration of eigenvalues of ZtZ inverse} and \ref{lemma: concentration of vtZZtv}, we have 
\beq
\P\lbk \mathcal{E}_1 \rbk {\leq} 4 \exp \lsk -\widetilde{c}_3 \alpha^2 \rsk, \quad
\P \lbk \mathcal{E}_2 \rbk {\leq} \exp  \{ -  \widetilde{c}_3 \min ( {\kappa^2}/{d}, \kappa ) \},
\eeq
where $\widetilde{c}_3=\min\lsk \widetilde{c}_1,\widetilde{c}_2 \rsk$ in Lemmas \ref{lemma: concentration of eigenvalues of ZtZ inverse} and \ref{lemma: concentration of vtZZtv}.

To obtain the $n^{-1/2}$ rate, for $\p \{ \mathcal{E}_1 \}$, by choosing $\exp \lsk -\widetilde{c}_3 \alpha^2 \rsk = 1/\sqrt{n}$, we find $\alpha=\sqrt{1/(2\widetilde{c}_3)}  \sqrt{\log n}:=\sqrt{c_3 \log n}$ where $c_3=1/(2\widetilde{c}_3)$.  

Similarly, for $\p \{ \mathcal{E}_2 \}$, let $\exp \{ -\widetilde{c}_3 \min \lsk {\kappa^2}/{d}, \kappa \rsk \} = 1/\sqrt{n}$. 
If $\kappa < d$, we choose $\exp \lsk -\widetilde{c}_3 \kappa^2/d \rsk = 1/\sqrt{n}$ so that $\kappa = \sqrt{c_3 d \log n}=\sqrt{d}\alpha$. 
If $\kappa \geq d$, we choose $\exp \lsk -\widetilde{c}_3 \kappa \rsk = 1/\sqrt{n}$ so that $\kappa = c_3 \log n = \alpha^2$. 
That is, $\kappa = \alpha \max(\sqrt{d}, \alpha)$.

\item With the $\alpha$ and $\kappa$ from (a), we will find $\eta$ such that $\p \{ \mathcal{E}_\eta \cap \mathcal{E}_1^c \cap \mathcal{E}_2^c \} = 0$ and $\eta \asymp \max(d, \log n) / \sqrt{n}$.

On the event $\mathcal{E}_\eta$, we have
\beq
\labs \frac{\widehat{\beta}_j-\beta_j}{L_j} - \bu_j\mt \bZ\mt \bv \rabs 
= \labs \bu_j\mt \lmk \frac{\sqrt{n-d} }{\sqrt{\bu_j\mt \lsk \bZ\mt \bZ \rsk\inverse \bu_j}} \frac{\lsk \bZ\mt\bZ \rsk\inverse}{\sqrt{\bv\mt \lsk \bI_n-\bP_{\bZ} \rsk \bv}}  - \bI_d \rmk \bZ\mt \bv \rabs.
\eeq
If we define 
\begin{equation}
\widetilde{\bI}_d= \frac{\sqrt{n-d} }{\sqrt{\bu_j\mt \lsk \bZ\mt \bZ \rsk\inverse \bu_j}} \frac{\lsk \bZ\mt\bZ \rsk\inverse}{\sqrt{\bv\mt \lsk \bI_n-\bP_{\bZ} \rsk \bv}},
\label{eqn: widetilde Id}
\end{equation}
then we have
\begin{eqnarray}
\labs \frac{\widehat{\beta}_j-\beta_j}{L_j} - \bu_j\mt \bZ\mt \bv \rabs
&=&
\labs  \bu_j\mt \lsk \widetilde{\bI}_d - \bI_d \rsk \bZ\mt\bv \rabs \nonumber\\
&=&
\labs  \frac{\bu_j\mt}{\| \bu_j \|_2} \lsk \widetilde{\bI}_d - \bI_d \rsk \frac{\bZ\mt\bv}{ \| \bZ\mt \bv \|_2 } \rabs \| \bu_j \|_2 \| \bZ\mt \bv \|_2  \nonumber \\
&\leq & 
\|\widetilde{\bI}_d - \bI_d\| \|\bZ\mt\bv\|_2 \nonumber\\
&=&  \max \lmk \labs \lambda_{\max}\lsk \widetilde{\bI}_d - \bI_d \rsk \rabs, \labs \lambda_{\min}\lsk \widetilde{\bI}_d - \bI_d \rsk \rabs \rmk \sqrt{\bv\mt\bZ\bZ\mt\bv} \nonumber \\
& = & \max \lmk \labs \lambda_{\max}\lsk \widetilde{\bI}_d \rsk -1 \rabs, \labs \lambda_{\min}\lsk \widetilde{\bI}_d \rsk -1 \rabs \rmk \sqrt{\bv\mt\bZ\bZ\mt\bv}.
\label{eqn: subG coverage distance to normal proxy with widetilde Id}    
\end{eqnarray}
On the even $\mathcal{E}_\eta \cap \mathcal{E}_1^c \cap \mathcal{E}_2^c$, by some algebra, we have
\allowdisplaybreaks
\beq
\lambda_{\max}(\widetilde{\bI}_d)-1 
& \leq & 
\frac{\sqrt{n} \sqrt{\frac{nd}{\lsk  \sqrt{n} - \widetilde{C} \sqrt{d} -\alpha \rsk^2}- d +\frac{n \max(d, \alpha^2)}{\lsk  \sqrt{n} - \widetilde{C} \sqrt{d} -\alpha \rsk^2}} + 3\alpha \sqrt{n} +3 \widetilde{C} \sqrt{dn} }{\lsk  \sqrt{n} - \widetilde{C} \sqrt{d} -\alpha \rsk^2 \sqrt{1-\frac{d+\max(d, \alpha^2)}{\lsk  \sqrt{n} - \widetilde{C} \sqrt{d} -\alpha \rsk^2}}},\\
\lambda_{\min}(\widetilde{\bI}_n)-1 
& \geq &
- \frac{\sqrt{nd} + 3\alpha \sqrt{n} + 3 \widetilde{C} \sqrt{nd} + (\alpha + \widetilde{C} \sqrt{d})^2}{\lsk \sqrt{n} + \widetilde{C} \sqrt{d} + \alpha \rsk^2},\\
\sqrt{\bv\mt\bZ\bZ\mt\bv} 
& \leq &
\sqrt{2} \max \lsk \sqrt{d}, \alpha \rsk.
\eeq

By $ d = o(\sqrt{n}) $ in Assumption \ref{assumption::regularity-conditions}, in $\lambda_{\max}(\widetilde{\bI}_d) - 1$, we have $n/(\sqrt{n} - \widetilde{C}\sqrt{d} - \alpha)^2 \rightarrow 1$ and $[d + \max(d, \alpha^2)] / (\sqrt{n} - \widetilde{C} \sqrt{d} - \alpha)^2 \rightarrow 0$.
Also in $\lambda_{\min}(\widetilde{\bI}_d) - 1$, we have $\sqrt{n} / (\sqrt{n} + \widetilde{C} \sqrt{d} + \alpha) \rightarrow 1$.
Hence there exists a positive integer $N$ such that for $n > N$, on the event $\mathcal{E}_\eta \cap \mathcal{E}_1^c \cap \mathcal{E}_2^c$, we have
\begin{eqnarray}
\labs \frac{\widehat{\beta}_j - \beta_j}{L_j} - \bu_j \bZ\mt \bv \rabs
& \leq &
\max \lmk \labs \lambda_{\max}\lsk \widetilde{\bI}_d \rsk -1 \rabs, \labs \lambda_{\min}\lsk \widetilde{\bI}_d \rsk -1 \rabs \rmk \sqrt{\bv\mt\bZ\bZ\mt\bv} \nonumber \\
& \leq & C_2  \frac{\max \lsk d, \alpha^2 \rsk}{\sqrt{n}},
\label{eqn: berry esseen bd control delta}   
\end{eqnarray}
where $C_2$ is an absolute constant depending on $\widetilde{C}$.
If $\eta$ is slightly greater than the upper bound in \eqref{eqn: berry esseen bd control delta}, $\P\{ \mathcal{E}_\eta \cap \mathcal{E}_1^c \cap \mathcal{E}_2^c \} = 0$. 

So we finally choose
\begin{equation}
\eta = (C_2 + 1)\frac{\max \lsk d, \alpha^2 \rsk}{\sqrt{n}}= (C_2 + 1)\frac{\max \lsk d,c_3 \log n \rsk}{\sqrt{n}}.
\label{eqn: eta chosen for subG coverage} 
\end{equation}
\end{enumerate}

The $\eta$ in \eqref{eqn: eta chosen for subG coverage} has the desired rate $\max (d, \log n) / \sqrt{n}$. Plugging $\P\{ \mathcal{E}_\eta \cap \mathcal{E}_1^c \cap \mathcal{E}_2^c \} = 0$ into \eqref{eqn: subG coverage distance to normal proxy with widetilde Id PE1 PE2 PEetaE1E2C}, we also have
\begin{equation}
\p \lbk \mathcal{E}_{\eta} \rbk \leq \P \{ \mathcal{E}_1 \}+\P \{ \mathcal{E}_1 \} \leq \frac{4}{\sqrt{n}} + \frac{1}{\sqrt{n}} = \frac{5}{\sqrt{n}}.
\label{eqn: berry esseen bd P(E_delta)}    
\end{equation}

\item[{\bf Step 3.}] Combine the results.
Collecting \eqref{eqn: berry esseen bd control Delta with self-normalized lemma}, \eqref{eqn: eta chosen for subG coverage}, and \eqref{eqn: berry esseen bd P(E_delta)},
we have
\beq
\P \lbk \mathcal{E}_\eta \rbk + \Delta + \frac{\eta}{\sqrt{2 \pi}} \lesssim \frac{\max \lsk d, c_3 \log n \rsk}{\sqrt{n} \lambda_{\min}^{3/2} \lsk \bV \rsk },
\eeq
which completes the proof by \eqref{eqn: berry esseen bd use E delta Delta Phi AC together}.
\end{enumerate}
\end{proof}

\setcounter{equation}{0}
 \makeatletter
 \renewcommand{\theequation}{F.\@arabic\c@equation}
\makeatother

\setcounter{lemma}{0}
 \makeatletter
\renewcommand{\thelemma}{F.\@arabic\c@lemma}
\makeatother


\setcounter{equation}{0}
 \makeatletter
 \renewcommand{\theequation}{C.\@arabic\c@equation}
\makeatother

\setcounter{lemma}{0}
 \makeatletter
\renewcommand{\thelemma}{C.\@arabic\c@lemma}
\makeatother

\section{Proofs of the Results on Power in Section \ref{all of power}}
\label{pf of all of power}

\subsection{Proof of Theorem \ref{theorem: power}}
\label{subsec: pf of power approximation}

If $h = 0$, then $\widehat{\beta}_j/L_j$ is $T_j$ in Theorem \ref{Thm: subG coverage prob}, which is proved in Section \ref{sec: pf of all subG coverage prob}. 
So in this section, we will focus on $h>0$.
We analyze the power function $\p \{ \widehat{\beta}_j / L_j > z_\alpha \} = \e \mathcal{I} \{ \widehat{\beta}_j / L_j > z_\alpha \}$ by first conditioning on $\bw$ and then averaging over the randomness of $\bw$.

Specifically, rewrite $\widehat{\beta}_j / L_j$ as $(\widehat{\beta}_j - \beta_j) / L_j + \beta_j / L_j$.
Since $\beta_j=h\sigma (\bfe_j\mt\bSigma\inverse\bfe_j)^{1/2} (n-d)^{-1/2}$, we have
\begin{equation}
\frac{\beta_j}{L_j}=\frac{h}{\sqrt{\bv\mt\lsk \bI_n - \bP_\bZ \rsk \bv }\sqrt{n\bu_j\mt\lsk \bZ\mt\bZ\rsk\inverse\bu_j} \sqrt{\bw\mt\bV\bw/n}},
\label{eqn: betaj/Lj in w and Z for pwr}    
\end{equation}
which can be approximated by $h (\bw\mt \bV \bw / n)^{-1/2}$.

From \eqref{eqn: decomp of Lj^-1 ( hat(beta)_j - beta_j ) step ii}, we have
\begin{equation}
\frac{\widehat{\beta}_j-\beta_j}{L_j}=\bu_j\mt \lsk \bZ\mt \bZ \rsk\inverse \bZ\mt \bv \frac{\sqrt{n-d} }{\sqrt{\bu_j\mt \lsk \bZ\mt \bZ \rsk\inverse \bu_j}} \frac{1}{\sqrt{\bv\mt \lsk \bI_n-\bP_{\bZ} \rsk \bv}},
\label{eqn: Tj in w and Z for pwr}    
\end{equation}
which can be approximated by $\bu_j\mt \bZ \bv$, where $\bv$ is defined in Lemma \ref{lemma: self-normlz epsilon sub-G property}.
From Theorem \ref{Thm: subG coverage prob},  $(\widehat{\beta}_j - \beta_j)/L_j $ converges weakly to $\mathcal{N}(0,1)$ with the randomness from $\bZ$ and the entries in $\bv$ are approximately $1/\sqrt{n}$.

Since $\bZ \Perp \bw$ in Assumption \ref{assumption::regularity-conditions} (iii), $h (\bw\mt \bV \bw / n)^{-1/2}$ and $\bu_j\mt \bZ \bv$ are approximately independent,
and so are $ \beta_j / L_j$ and $(\widehat{\beta}_j - \beta_j)/L_j $.
Such observation motivates us to characterize the asymptotic normality of  $(\widehat{\beta}_j - \beta_j)/L_j$ given $\bw$ first.
Then we include the randomness of $\bw$ in $h (\bw\mt \bV \bw / n)^{-1/2}$.

However, not every $\bw$ can lead to the asymptotic normality of $\bu_j\mt\bZ\bv$ in $(\widehat{\beta}_j - \beta_j)/L_j$.
We need to define a proper set of $\bw$ under which $\bu_j\mt \bZ \bv$ still converges to $\mathcal{N}(0,1)$.
Hence we define
\begin{equation}
\mathcal{E}_{\mathrm{prop}} = \mathcal{E}_1 \cap \mathcal{E}_2 \cap \mathcal{E}_3,
\label{eqn: mathcal(E)_prop of w for power}    
\end{equation}
with
\begin{eqnarray}
\mathcal{E}_1 &=& \lbk \bw\mt \bw  > n/2 \rbk,
\label{eqn: E1 of w for power}\\
\mathcal{E}_2 &=& \lbk \bw\mt\bV\bw - n < \gamma^2 n \rbk,
\label{eqn: E2 of w for power}\\
\mathcal{E}_3 &=& \lbk \forall\ i \in \lbk 1,\dots,n \rbk,\ \labs v_i \rabs \leq \sqrt{3/[2c \lambda_{\min}(\bV) ]} \sqrt{\log n/n} \rbk, 
\label{eqn: E3 of w for power}
\end{eqnarray}
and $\gamma = \sqrt{\widetilde{c}_5 \log n}$ in $\mathcal{E}_2$ is from Lemma \ref{lemma: E_2 of w}, $v_i$ and $c$ in $\mathcal{E}_3$ are from Lemma \ref{lemma: self-normlz epsilon sub-G property}.
Event $\mathcal{E}_1$ requires $\bw\mt\bw$ not to be too small to avoid the degenerate case.
Event $\mathcal{E}_2$ requires $\bw\mt \bV\bw$ not to deviate from $\e (\bw\mt \bV \bw) = n$ too far.
Event $\mathcal{E}_3$ requires $|v_i|$ not to surpass $n^{-1/2}$ too much which may fail the asymptotic normality of $\bu_j\mt \bZ\mt \bv$.

Given $\bw \in \mathcal{E}_{\mathrm{prop}}$, we characterize the approximations in \eqref{eqn: betaj/Lj in w and Z for pwr} and \eqref{eqn: Tj in w and Z for pwr} by the following events:
\begin{eqnarray}
\mathcal{E}_{\eta_1}^\bw &=& \lbk \labs \frac{\beta_j}{L_j} - \frac{h}{\sqrt{ \delta}} \rabs \geq \eta_1 \frac{h}{ \sqrt{ \delta } } \rbk, 
\label{eqn: E_eta1^w for power}
\\
\mathcal{E}_{\eta_2}^\bw &=& \lbk \labs \frac{\widehat{\beta}_j-\beta_j}{L_j} - \bu_j\mt\bZ\mt\bv \rabs \geq \eta_2 \rbk,
\label{eqn: E_eta2^w for power}
\end{eqnarray}
where the superscript $\bw$ refers to the fixed value of $\bw$, $\delta = \bw\mt\bV \bw/n$ is defined in Theorem \ref{theorem: power}, and the approximation errors are reflected by $\eta_1$ and $\eta_2$, .

The following lemma is an intermediate result for proving Theorem \ref{theorem: power}.

\begin{lemma}\label{lemma: framework of pf of power}
Under Assumption \ref{assumption::regularity-conditions}, we have 
\begin{eqnarray}
& & \labs \p \lbk \frac{\widehat{\beta}_j}{L_j} > z_\alpha \rbk - \pi(h, \bV) \rabs \nonumber\\
& \leq & 2 \e_{\bw} \lmk \mathcal{I}{ \lbk  \bw \in
\mathcal{E}_{\mathrm{prop}}^c \rbk } \rmk + \\
 && \nonumber \qquad  \e_{\bw} \lmk 
\mathcal{I}{ \lbk \bw \in \mathcal{E}_{\mathrm{prop}} \rbk } \lsk \p_\bZ\lbk \mathcal{E}_{\eta_1}^{\bw} \rbk + \p_\bZ \lbk \mathcal{E}_{\eta_2}^{\bw}\rbk + \Delta_{\bw} + \Gamma_{\bw, \eta_1, \eta_2}(h)  \rsk \rmk,
\label{eqn: framework of pf of power}    
\end{eqnarray}
where 
\begin{eqnarray}
\Delta_{\bw} &=& \sup_{t \in\mathbb{R}} \labs \p_{\bZ}\lbk \bu_j\mt\bZ\mt\bv \leq t \rbk - \Phi(t) \rabs,
\label{eqn: Delta_w for power}\\
\Gamma_{\bw, \eta_1, \eta_2}(h) 
&=&
\overline{\Phi} \lsk z_\alpha - \frac{h}{\sqrt{\delta}} - \frac{\eta_1 h }{\sqrt{\delta}} - \eta_2 \rsk
-  \overline{\Phi} \lsk z_\alpha - \frac{h}{\sqrt{\delta}} + \frac{\eta_1 h }{\sqrt{\delta}} + \eta_2 \rsk,
\label{eqn: Gamma_w,eta1,eta2(h) for power}
\end{eqnarray}
and 
$\delta = \bw\mt\bV \bw/n$ is defined in Theorem \ref{theorem: power}, 
the $\p_{\bZ}$ in $\p_\bZ\{ \mathcal{E}_{\eta_1}^{\bw} \}$ and $\p_\bZ\{ \mathcal{E}_{\eta_2}^{\bw} \}$ is the probability measure for $\bZ$ with given $\bw$,
and $\overline{\Phi}(t) = 1 - \Phi(t)$.
\end{lemma}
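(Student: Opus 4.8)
The plan is to condition on $\bw$, use the independence $\bZ \Perp \bw$ from Assumption \ref{assumption::regularity-conditions}(iii) to express the power as an average over $\bw$ of a conditional probability, and compare that conditional probability to the integrand $\overline{\Phi}(z_\alpha - h\delta^{-1/2}) = \Phi(h\delta^{-1/2} - z_\alpha)$ appearing in $\pi(h,\bV)$, where $\delta = \bw\mt\bV\bw/n$ depends on $\bw$ only. By the tower property I would write $\p\{\widehat{\beta}_j/L_j > z_\alpha\} = \e_{\bw}[\p_{\bZ}\{\widehat{\beta}_j/L_j > z_\alpha\}]$ and $\pi(h,\bV) = \e_{\bw}[\overline{\Phi}(z_\alpha - h\delta^{-1/2})]$, so that the target reduces to controlling $|\e_{\bw}[\p_{\bZ}\{\cdot\} - \overline{\Phi}(z_\alpha - h\delta^{-1/2})]|$.

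Next I would insert the truncated quantities $\e_{\bw}[\mathcal{I}\{\bw \in \mathcal{E}_{\mathrm{prop}}\}\p_{\bZ}\{\cdot\}]$ and $\e_{\bw}[\mathcal{I}\{\bw \in \mathcal{E}_{\mathrm{prop}}\}\overline{\Phi}(z_\alpha - h\delta^{-1/2})]$ and apply the triangle inequality in three pieces. Using $1 - \mathcal{I}\{\bw \in \mathcal{E}_{\mathrm{prop}}\} = \mathcal{I}\{\bw \in \mathcal{E}_{\mathrm{prop}}^c\}$ together with the fact that $\p_{\bZ}\{\cdot\}$ and $\overline{\Phi}(\cdot)$ both take values in $[0,1]$, the two pieces that trade a full expectation for its truncation are each bounded by $\e_{\bw}[\mathcal{I}\{\bw \in \mathcal{E}_{\mathrm{prop}}^c\}]$, which together produce the leading term $2\,\e_{\bw}[\mathcal{I}\{\bw \in \mathcal{E}_{\mathrm{prop}}^c\}]$. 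This leaves the middle piece $\e_{\bw}[\mathcal{I}\{\bw \in \mathcal{E}_{\mathrm{prop}}\}\,|\p_{\bZ}\{\widehat{\beta}_j/L_j > z_\alpha\} - \overline{\Phi}(z_\alpha - h\delta^{-1/2})|]$ to be handled.

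The core is a pointwise bound valid for each fixed $\bw \in \mathcal{E}_{\mathrm{prop}}$. Writing $\widehat{\beta}_j/L_j = (\widehat{\beta}_j - \beta_j)/L_j + \beta_j/L_j$, on the event $(\mathcal{E}_{\eta_1}^{\bw})^c \cap (\mathcal{E}_{\eta_2}^{\bw})^c$ one has $\beta_j/L_j \in [\,h\delta^{-1/2}(1-\eta_1),\, h\delta^{-1/2}(1+\eta_1)\,]$ by \eqref{eqn: E_eta1^w for power} and $(\widehat{\beta}_j - \beta_j)/L_j \in [\,\bu_j\mt\bZ\mt\bv - \eta_2,\, \bu_j\mt\bZ\mt\bv + \eta_2\,]$ by \eqref{eqn: E_eta2^w for power}, so $\widehat{\beta}_j/L_j$ lies between $\bu_j\mt\bZ\mt\bv + h\delta^{-1/2}(1-\eta_1) - \eta_2$ and $\bu_j\mt\bZ\mt\bv + h\delta^{-1/2}(1+\eta_1) + \eta_2$. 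Translating $\{\widehat{\beta}_j/L_j > z_\alpha\}$ into the corresponding threshold events for $\bu_j\mt\bZ\mt\bv$ and discarding $\mathcal{E}_{\eta_1}^{\bw}, \mathcal{E}_{\eta_2}^{\bw}$ at the cost of their probabilities, I would invoke the definition of $\Delta_{\bw}$ in \eqref{eqn: Delta_w for power} to swap the law of $\bu_j\mt\bZ\mt\bv$ for $\Phi$, and then use monotonicity of $\overline{\Phi}$ to absorb the mismatch between the shifted thresholds $z_\alpha - h\delta^{-1/2}(1\pm\eta_1)\mp\eta_2$ and the target $z_\alpha - h\delta^{-1/2}$ into $\Gamma_{\bw,\eta_1,\eta_2}(h)$ defined in \eqref{eqn: Gamma_w,eta1,eta2(h) for power}. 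Combining the resulting upper and lower bounds gives, for each $\bw \in \mathcal{E}_{\mathrm{prop}}$, the inequality $|\p_{\bZ}\{\cdot\} - \overline{\Phi}(z_\alpha - h\delta^{-1/2})| \leq \p_{\bZ}\{\mathcal{E}_{\eta_1}^{\bw}\} + \p_{\bZ}\{\mathcal{E}_{\eta_2}^{\bw}\} + \Delta_{\bw} + \Gamma_{\bw,\eta_1,\eta_2}(h)$; taking $\e_{\bw}[\mathcal{I}\{\bw \in \mathcal{E}_{\mathrm{prop}}\}\,\cdot\,]$ finishes the bound. This smoothing step is the direct analogue of the $\eta$-argument used in the proof of Lemma \ref{lemma: BEB decomposition}.

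The main obstacle is combining the \emph{multiplicative} approximation error $\eta_1 h\delta^{-1/2}$ of the location term $\beta_j/L_j$ with the \emph{additive} error $\eta_2$ of the fluctuation term $(\widehat{\beta}_j-\beta_j)/L_j$, while tracking the direction of every inequality so that the leftover $\overline{\Phi}$-differences collapse exactly into the nonnegative quantity $\Gamma_{\bw,\eta_1,\eta_2}(h)$ and not into anything larger. Concretely one must check that both $\overline{\Phi}(z_\alpha - h\delta^{-1/2}(1+\eta_1) - \eta_2) - \overline{\Phi}(z_\alpha - h\delta^{-1/2})$ and $\overline{\Phi}(z_\alpha - h\delta^{-1/2}) - \overline{\Phi}(z_\alpha - h\delta^{-1/2}(1-\eta_1) + \eta_2)$ are dominated by $\Gamma_{\bw,\eta_1,\eta_2}(h)$, which is immediate from the monotonicity of $\overline{\Phi}$ since $z_\alpha - h\delta^{-1/2}(1+\eta_1) - \eta_2 \le z_\alpha - h\delta^{-1/2} \le z_\alpha - h\delta^{-1/2}(1-\eta_1) + \eta_2$.
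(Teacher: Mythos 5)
Your proposal is correct and follows essentially the same route as the paper: condition on $\bw$ via the tower property, pay $2\,\e_{\bw}[\mathcal{I}\{\bw\in\mathcal{E}_{\mathrm{prop}}^c\}]$ for restricting to $\mathcal{E}_{\mathrm{prop}}$, then for each fixed $\bw\in\mathcal{E}_{\mathrm{prop}}$ sandwich $\{\widehat{\beta}_j/L_j>z_\alpha\}$ between shifted threshold events for $\bu_j\mt\bZ\mt\bv$ on $(\mathcal{E}_{\eta_1}^{\bw})^c\cap(\mathcal{E}_{\eta_2}^{\bw})^c$, swap in $\Phi$ via $\Delta_{\bw}$, and absorb the threshold mismatch into $\Gamma_{\bw,\eta_1,\eta_2}(h)$ by monotonicity of $\overline{\Phi}$. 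The only cosmetic difference is that you obtain the factor $2$ by a three-term triangle inequality around the truncated expectations, whereas the paper bounds the integrand on $\mathcal{E}_{\mathrm{prop}}^c$ directly by $2$; the resulting bound is identical.
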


\begin{proof}[Proof of Lemma \ref{lemma: framework of pf of power}]
By $\bw \Perp \bZ$ in Assumption \ref{assumption::regularity-conditions} (iii), we have 
\allowdisplaybreaks
\begin{eqnarray}
& & \labs \p \lbk {\widehat{\beta}_j}/{L_j} > z_\alpha \rbk - \pi(h, \bV) \rabs \nonumber\\
&=&
\labs \e_{\bw} \lmk \e_{\bZ} \lsk \mathcal{I} \lbk  {\widehat{\beta}_j}/{L_j} > z_\alpha \rbk \rsk \rmk - \e_{\bw} \lmk \Phi\lsk \frac{h}{\sqrt{\delta}} - z_\alpha \rsk \rmk \rabs \nonumber\\
&\leq &
\e_{\bw} \labs \e_{\bZ} \lsk \mathcal{I} \lbk  {\widehat{\beta}_j}/{L_j} > z_\alpha \rbk \rsk - \Phi\lsk \frac{h}{\sqrt{\delta}} - z_\alpha \rsk \rabs \nonumber\\
&=& \e_{\bw} \labs \p_{\bZ} \lbk  {\widehat{\beta}_j}/{L_j} > z_\alpha \rbk - \overline{\Phi}\lsk z_\alpha - \frac{h}{\sqrt{\delta}} \rsk \rabs \nonumber\\
&\leq &
2 \e_{\bw} \lsk  \mathcal{I} \lbk \bw \in \mathcal{E}_{\mathrm{prop}}^c \rbk \rsk +  \e_{\bw} \lmk  \mathcal{I} \lbk \bw \in \mathcal{E}_{\mathrm{prop}} \rbk \labs  \p_{\bZ} \lbk  \frac{\widehat{\beta}_j}{L_j} > z_\alpha \rbk - \overline{\Phi}\lsk z_\alpha - \frac{h}{\sqrt{\delta}} \rsk \rabs \rmk.
\label{eqn: power framework by indep of w Z}    
\end{eqnarray}
Next we will bound the second term 
in \eqref{eqn: power framework by indep of w Z}.

Given $\bw \in \mathcal{E}_{\mathrm{prop}}$, we have
\begin{eqnarray}
& & \p_{\bZ} \lbk \bu_j\mt\bZ\mt\bv - \eta_2 + \frac{h}{\sqrt{\delta}} - \eta_1 \frac{h}{\sqrt{\delta}} > z_\alpha \rbk \nonumber\\
& \leq & 
\p_{\bZ} \lbk \bu_j\mt\bZ\mt\bv - \eta_2 + \frac{h}{\sqrt{\delta}} - \eta_1 \frac{h}{\sqrt{\delta}} > z_\alpha\ \mathrm{and}\ (\mathcal{E}_{\eta_1}^{\bw})^c \cap (\mathcal{E}_{\eta_2}^{\bw})^c \rbk + \p_{\bZ} \lbk \mathcal{E}_{\eta_1}^{\bw} \rbk + \p_{\bZ} \lbk \mathcal{E}_{\eta_2}^{\bw} \rbk \nonumber\\
&\leq & 
\p_{\bZ} \lbk \frac{\widehat{\beta}_j}{L_j} > z_\alpha \rbk + \p_{\bZ} \lbk \mathcal{E}_{\eta_1}^{\bw} \rbk + \p_{\bZ} \lbk \mathcal{E}_{\eta_2}^{\bw} \rbk, 
\label{eqn: pwr ujtZtv and h/sqrtdelta concentration left}    
\end{eqnarray}
where the last step follows from the fact that on the event $(\mathcal{E}_{\eta_1}^{\bw})^c \cap (\mathcal{E}_{\eta_2}^{\bw})^c$,
\beq
\frac{h}{\sqrt{\delta}} - \frac{\beta_j}{L_j} 
& \leq & 
\labs \frac{\beta_j}{L_j} -  \frac{h}{\sqrt{\delta}} \rabs < \eta_1 \frac{h}{\sqrt{\delta}},\\
\bu_j\mt \bZ\mt\bv - \frac{\widehat{\beta}_j - \beta_j}{L_j} 
&\leq &
\labs \frac{\widehat{\beta}_j - \beta_j}{L_j} - \bu_j\mt \bZ\mt\bv \rabs < \eta_2. 
\eeq
Then by the definition of $\Delta_{\bw}$ in \eqref{eqn: Delta_w for power}, we have
\begin{eqnarray}
& & \p_{\bZ} \lbk \bu_j\mt\bZ\mt\bv - \eta_2 + \frac{h}{\sqrt{\delta}} - \eta_1 \frac{h}{\sqrt{\delta}} > z_\alpha \rbk \nonumber \\
&=& 
\overline{\Phi} \lsk z_\alpha - \frac{h}{\sqrt{\delta}} + \frac{\eta_1 h}{\sqrt{\delta}} + \eta_2  \rsk 
+ 
\p_{\bZ} \lbk \bu_j\mt \bZ\mt \bv > z_\alpha - \frac{h}{\sqrt{\delta}} + \frac{\eta_1 h}{\sqrt{\delta}} + \eta_2  \rbk 
- 
\overline{\Phi} \lsk  z_\alpha - \frac{h}{\sqrt{\delta}} + \frac{\eta_1 h}{\sqrt{\delta}} + \eta_2  \rsk \nonumber\\
&\geq &
\overline{\Phi} \lsk z_\alpha - \frac{h}{\sqrt{\delta}} + \frac{\eta_1 h}{\sqrt{\delta}} + \eta_2  \rsk 
-
\labs \p_{\bZ} \lbk \bu_j\mt \bZ\mt \bv > z_\alpha - \frac{h}{\sqrt{\delta}} + \frac{\eta_1 h}{\sqrt{\delta}} + \eta_2  \rbk 
- 
\overline{\Phi} \lsk  z_\alpha - \frac{h}{\sqrt{\delta}} + \frac{\eta_1h}{\sqrt{\delta}} + \eta_2  \rsk \rabs \nonumber \\
& \geq &
\overline{\Phi} \lsk z_\alpha - \frac{h}{\sqrt{\delta}} + \frac{\eta_1h}{\sqrt{\delta}} + \eta_2  \rsk 
- \Delta_{\bw}.
\label{eqn: pwr ujtZtv Berry-Esseen left}    
\end{eqnarray}
Combine \eqref{eqn: pwr ujtZtv and h/sqrtdelta concentration left} and \eqref{eqn: pwr ujtZtv Berry-Esseen left} to obtain
\begin{eqnarray}
& & \p_{\bZ} \lbk \widehat{\beta}_j/L_j > z_\alpha \rbk - \overline{\Phi} \lsk z_\alpha - {h}/{\sqrt{\delta}} \rsk \nonumber\\
&\geq &
\overline{\Phi} \lsk z_\alpha - \frac{h}{\sqrt{\delta}} + \frac{\eta_1h}{\sqrt{\delta}} + \eta_2  \rsk - \overline{\Phi}\lsk  z_\alpha - \frac{h}{\sqrt{\delta}} \rsk
- \Delta_{\bw} - \p_{\bZ} \lbk \mathcal{E}_{\eta_1}^{\bw} \rbk - \p_{\bZ} \lbk \mathcal{E}_{\eta_2}^{\bw} \rbk \nonumber\\
&\geq &
-\Gamma_{\bw, \eta_1,\eta_2}(h) - \Delta_{\bw} - \p \lbk \mathcal{E}_{\eta_1}^{\bw} \rbk - \p \lbk \mathcal{E}_{\eta_2}^{\bw} \rbk.
\label{eqn: pwr given w overall approx left} 
\end{eqnarray}

Similar to \eqref{eqn: pwr ujtZtv and h/sqrtdelta concentration left}, we have
\begin{eqnarray}
\p_{\bZ} \lbk \frac{\widehat{\beta}_j}{L_j} > z_\alpha \rbk 
&\leq & 
\p_\bZ \lbk \frac{\widehat{\beta}_j - \beta_j}{L_j} +\frac{\beta_j}{L_j} > z_\alpha\ \mathrm{and}\ (\mathcal{E}_{\eta_1}^{\bw})^c \cap (\mathcal{E}_{\eta_2}^{\bw})^c \rbk + \p_\bZ \lbk \mathcal{E}_{\eta_1}^{\bw}  \rbk + \p_\bZ \lbk \mathcal{E}_{\eta_2}^{\bw}  \rbk \nonumber\\
&\leq & 
\p \lbk \bu_j\mt\bZ\mt\bv + \eta_2 + \frac{h}{\sqrt{\delta}} + \eta_1 \frac{h}{\sqrt{\delta}} > z_\alpha \rbk + \p_\bZ \lbk \mathcal{E}_{\eta_1}^{\bw}  \rbk + \p_\bZ \lbk \mathcal{E}_{\eta_2}^{\bw}  \rbk,
\label{eqn: pwr ujtZtv and h/sqrtdelta concentration right}    
\end{eqnarray}
where the last step follows from the fact that on the event $(\mathcal{E}_{\eta_1}^{\bw})^c \cap (\mathcal{E}_{\eta_2}^{\bw})^c$,
\beq
\frac{\beta_j}{L_j} - \frac{h}{\sqrt{\delta}}  
& \leq & 
\labs \frac{\beta_j}{L_j} -  \frac{h}{\sqrt{\delta}} \rabs < \eta_1 \frac{h}{\sqrt{\delta}},\\
\frac{\widehat{\beta}_j - \beta_j}{L_j}  - \bu_j\mt \bZ\mt\bv 
&\leq &
\labs \frac{\widehat{\beta}_j - \beta_j}{L_j} - \bu_j\mt \bZ\mt\bv \rabs < \eta_2. 
\eeq

Similar to \eqref{eqn: pwr ujtZtv Berry-Esseen left} , we have
\begin{eqnarray}
& & \p_{\bZ} \lbk \bu_j\mt\bZ\mt\bv + \eta_2 + \frac{h}{\sqrt{\delta}} + \eta_1 \frac{h}{\sqrt{\delta}} > z_\alpha \rbk \nonumber \\
&=& 
\overline{\Phi} \lsk z_\alpha - \frac{h}{\sqrt{\delta}} - \frac{\eta_1 h}{\sqrt{\delta}} - \eta_2  \rsk 
+ 
\p_{\bZ} \lbk \bu_j\mt \bZ\mt \bv > z_\alpha - \frac{h}{\sqrt{\delta}} - \frac{\eta_1 h}{\sqrt{\delta}} - \eta_2  \rbk 
- 
\overline{\Phi} \lsk  z_\alpha - \frac{h}{\sqrt{\delta}} - \frac{\eta_1 h}{\sqrt{\delta}} - \eta_2  \rsk \nonumber\\
&\leq &
\overline{\Phi} \lsk z_\alpha - \frac{h}{\sqrt{\delta}} - \frac{\eta_1 h}{\sqrt{\delta}} - \eta_2  \rsk 
+
\labs \p_{\bZ} \lbk \bu_j\mt \bZ\mt \bv > z_\alpha - \frac{h}{\sqrt{\delta}} - \frac{\eta_1 h}{\sqrt{\delta}} - \eta_2  \rbk 
- 
\overline{\Phi} \lsk  z_\alpha - \frac{h}{\sqrt{\delta}} - \frac{\eta_1h}{\sqrt{\delta}} - \eta_2  \rsk \rabs \nonumber \\
& \leq &
\overline{\Phi} \lsk z_\alpha - \frac{h}{\sqrt{\delta}} - \frac{\eta_1h}{\sqrt{\delta}} - \eta_2  \rsk 
+ \Delta_{\bw}.
\label{eqn: pwr ujtZtv Berry-Esseen right}    
\end{eqnarray}

Similar to \eqref{eqn: pwr given w overall approx left}, combine \eqref{eqn: pwr ujtZtv and h/sqrtdelta concentration right} and \eqref{eqn: pwr ujtZtv Berry-Esseen right} to obtain
\begin{eqnarray}
& & \p_{\bZ} \lbk \widehat{\beta}_j/L_j > z_\alpha \rbk - \overline{\Phi} \lsk z_\alpha - {h}/{\sqrt{\delta}} \rsk \nonumber\\
&\leq  &
\overline{\Phi} \lsk z_\alpha - \frac{h}{\sqrt{\delta}} -\frac{\eta_1h}{\sqrt{\delta}} - \eta_2  \rsk - \overline{\Phi}\lsk  z_\alpha - \frac{h}{\sqrt{\delta}} \rsk
+ \Delta_{\bw} + \p_{\bZ} \lbk \mathcal{E}_{\eta_1}^{\bw} \rbk + \p_{\bZ} \lbk \mathcal{E}_{\eta_2}^{\bw} \rbk \nonumber\\
&\leq &
\Gamma_{\bw, \eta_1,\eta_2}(h) + \Delta_{\bw} + \p \lbk \mathcal{E}_{\eta_1}^{\bw} \rbk +  \p \lbk \mathcal{E}_{\eta_2}^{\bw} \rbk.
\label{eqn: pwr given w overall approx right} 
\end{eqnarray}

Therefore, given $\bw \in \mathcal{E}_{\mathrm{prop}} $, combining \eqref{eqn: pwr given w overall approx left} and \eqref{eqn: pwr given w overall approx right}, we have
\begin{equation}
 \labs \p_{\bZ} \lbk  \frac{\widehat{\beta}_j}{L_j} > z_\alpha \rbk  - \overline{\Phi}\lsk z_\alpha -  \frac{h}{\sqrt{\delta}}  \rsk \rabs 
\leq 
\Gamma_{\bw, \eta_1,\eta_2}(h) + \Delta_{\bw} + \p_{\bZ} \lbk \mathcal{E}_{\eta_1}^{\bw} \rbk + \p_{\bZ} \lbk \mathcal{E}_{\eta_2}^{\bw} \rbk.
\label{eqn: pwr given w overall approx}    
\end{equation}

Plugging \eqref{eqn: pwr given w overall approx} into the second term of \eqref{eqn: power framework by indep of w Z} completes the proof.
\end{proof}

The following lemma bounds $\p_{\bZ} \{ \mathcal{E}_{\eta_1}^{\bw} \}$ and $\p_{\bZ} \{ \mathcal{E}_{\eta_2}^{\bw} \}$ given $\bw \in \mathcal{E}_{\mathrm{prop}}$ in \eqref{eqn: framework of pf of power}.

\begin{lemma}\label{lemma: E_delta1^w adn E_delta1^w given w}
Let 
\begin{equation}
\eta_1 = (C_5 + 1) \frac{\max ( \sqrt{d},\gamma )}{\sqrt{n}}, \quad
\eta_2 = (C_2 + 1) \frac{\max ( d, \gamma^2)}{\sqrt{n}},
\label{eqn: eta1 and eta2 for pwr}
\end{equation}
where $C_5$ is a sufficiently large constant depending on $\widetilde{C}$ in Lemma \ref{lemma: concentration of eigenvalues of ZtZ inverse}, $C_2$ is from \eqref{eqn: eta chosen for subG coverage},
and $\gamma = \sqrt{\widetilde{c}_5 \log n}$ is from in Lemma \ref{lemma: E_1 of w}.
Under Assumption \ref{assumption::regularity-conditions},  given $\bw \in \mathcal{E}_{\mathrm{prop}}$ defined in \eqref{eqn: mathcal(E)_prop of w for power},  
there exists a positive integer $N$ such that for any $n > N$, we have
$$\p_{\bZ} \lbk\mathcal{E}_{\eta_1}^\bw  \rbk 
\leq \frac{5}{\sqrt{n}}, \quad
\p_{\bZ} \lbk\mathcal{E}_{\eta_2}^\bw  \rbk 
\leq \frac{5}{\sqrt{n}}.$$
\end{lemma}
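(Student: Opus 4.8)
The plan is to reduce both bounds to the machinery already developed in Step 2 of the proof of Theorem \ref{Thm: subG coverage prob}, now applied \emph{conditionally} on $\bw$. First I would fix $\bw \in \mathcal{E}_{\mathrm{prop}}$ and record the structural fact that makes the conditioning work: since $\mathcal{E}_{\mathrm{prop}} \subseteq \mathcal{E}_1 = \{\bw\mt\bw > n/2\}$ and $\bV$ is positive definite, $\bepsilon = \sigma\bV^{1/2}\bw \neq \bzero_n$, so $\bv = \bepsilon/\|\bepsilon\|_2$ is a fixed unit vector on $S^{n-1}$ that, by Assumption \ref{assumption::regularity-conditions}(iii), is independent of $\bZ$. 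Consequently the only randomness under $\p_{\bZ}$ is in $\bZ$, and $\bv$ plays exactly the role of the deterministic unit vector $\widetilde{\bv}$ in Lemma \ref{lemma: concentration of vtZZtv}.

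Next, for both events I would introduce the same two high-probability $\bZ$-events used in the Berry--Esseen proof, namely the eigenvalue event $\mathcal{A}_1 = \{\lambda_{\max}((\bZ\mt\bZ)\inverse) > (\sqrt n - \widetilde{C}\sqrt d - \gamma)^{-2}\} \cup \{\lambda_{\min}((\bZ\mt\bZ)\inverse) < (\sqrt n + \widetilde{C}\sqrt d + \gamma)^{-2}\}$ and the quadratic-form event $\mathcal{A}_2 = \{\bv\mt\bZ\bZ\mt\bv > d + \kappa\}$, with the choices $\alpha = \gamma$ and $\kappa = \gamma\max(\sqrt d,\gamma)$. The union bound then gives $\p_{\bZ}\{\mathcal{E}\} \leq \p_{\bZ}\{\mathcal{A}_1\} + \p_{\bZ}\{\mathcal{A}_2\} + \p_{\bZ}\{\mathcal{E} \cap \mathcal{A}_1^c \cap \mathcal{A}_2^c\}$ for each $\mathcal{E} \in \{\mathcal{E}_{\eta_1}^\bw, \mathcal{E}_{\eta_2}^\bw\}$. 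Using $\gamma = \sqrt{\widetilde{c}_5\log n}$ with $\widetilde{c}_5 \geq 1/\{2\min(\widetilde{c}_1,\widetilde{c}_2)\}$, Lemma \ref{lemma: concentration of eigenvalues of ZtZ inverse} with $\alpha = \gamma$ bounds each half of $\mathcal{A}_1$ by $2\exp(-\widetilde{c}_1\gamma^2) \leq 2/\sqrt n$, so $\p_{\bZ}\{\mathcal{A}_1\} \leq 4/\sqrt n$; and the conditional (fixed-$\bv$) bound established inside the proof of Lemma \ref{lemma: concentration of vtZZtv} bounds $\mathcal{A}_2$ by $\exp(-\widetilde{c}_2\gamma^2) \leq 1/\sqrt n$, once one checks $\min(\kappa^2/d,\kappa) = \gamma^2$ in both cases $\gamma \leq \sqrt d$ and $\gamma > \sqrt d$. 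Together these give the $5/\sqrt n$ budget.

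The remaining task is to show that the third term vanishes for large $n$, i.e. that $\mathcal{E}_{\eta_1}^\bw \cap \mathcal{A}_1^c \cap \mathcal{A}_2^c = \emptyset$ and $\mathcal{E}_{\eta_2}^\bw \cap \mathcal{A}_1^c \cap \mathcal{A}_2^c = \emptyset$. For $\mathcal{E}_{\eta_2}^\bw$ this is essentially free: the approximation error $|(\widehat{\beta}_j - \beta_j)/L_j - \bu_j\mt\bZ\mt\bv|$ is controlled on $\mathcal{A}_1^c \cap \mathcal{A}_2^c$ by exactly the deterministic $\widetilde{\bI}_d$-computation \eqref{eqn: subG coverage distance to normal proxy with widetilde Id}--\eqref{eqn: berry esseen bd control delta}, with $\alpha$ replaced by $\gamma$, yielding the bound $C_2\max(d,\gamma^2)/\sqrt n < \eta_2$ by the choice $\eta_2 = (C_2+1)\max(d,\gamma^2)/\sqrt n$. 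For $\mathcal{E}_{\eta_1}^\bw$ I would first rewrite $\beta_j/L_j$ via \eqref{eqn: betaj/Lj in w and Z for pwr} and $\delta = \bw\mt\bV\bw/n$ as $(h/\sqrt\delta)/R$, with $R = \{\bv\mt(\bI_n - \bP_{\bZ})\bv\}^{1/2}\{n\,\bu_j\mt(\bZ\mt\bZ)\inverse\bu_j\}^{1/2}$, so that the common factor $h/\sqrt\delta$ cancels and $\mathcal{E}_{\eta_1}^\bw = \{|1/R - 1| \geq \eta_1\}$, an event free of $h$ and $\delta$ (which is precisely why $\eta_1$ can be taken independent of them). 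Using $\|\bu_j\|_2 = 1$ with the eigenvalue bounds on $\mathcal{A}_1^c$, and $\bv\mt(\bI_n - \bP_{\bZ})\bv = 1 - \bv\mt\bZ(\bZ\mt\bZ)\inverse\bZ\mt\bv$ together with $\bv\mt\bZ\bZ\mt\bv \leq d + \kappa$ on $\mathcal{A}_2^c$, I would show that both factors of $R$ converge to $1$.

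The main obstacle is this last rate computation for $\eta_1$: one must verify that the dominant contribution to $|1/R - 1|$ comes from the $(\bZ\mt\bZ)\inverse$ factor and is of order $\max(\sqrt d,\gamma)/\sqrt n$, while the projection factor $\bv\mt(\bI_n - \bP_{\bZ})\bv$ contributes only at the smaller order $\max(d,\gamma^2)/n$; here the hypothesis $d = o(\sqrt n)$ is what guarantees $\widetilde{C}\sqrt d/\sqrt n, \gamma/\sqrt n \to 0$, so that $(1 \mp \widetilde{C}\sqrt{d/n} \mp \gamma/\sqrt n)^{-2} = 1 \pm O(\max(\sqrt d,\gamma)/\sqrt n)$. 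One then fixes the constant $C_5$ (depending on $\widetilde{C}$) so that $|1/R - 1| \leq C_5\max(\sqrt d,\gamma)/\sqrt n < \eta_1 = (C_5+1)\max(\sqrt d,\gamma)/\sqrt n$ for all $n > N$, making the intersection empty. Once both emptiness statements hold, the two claimed bounds follow at once, since $\p_{\bZ}\{\mathcal{E}_{\eta_i}^\bw\} \leq \p_{\bZ}\{\mathcal{A}_1\} + \p_{\bZ}\{\mathcal{A}_2\} \leq 5/\sqrt n$ for $i = 1,2$.
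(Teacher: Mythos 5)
Your proposal is correct and follows essentially the same route as the paper: the same decomposition into the eigenvalue event and the quadratic-form event with $\alpha=\gamma$ and $\kappa=\gamma\max(\sqrt d,\gamma)$, the same $4/\sqrt n + 1/\sqrt n$ budget, the same reuse of the $\widetilde{\bI}_d$ computation for $\mathcal{E}_{\eta_2}^{\bw}$, and the same cancellation of $h/\sqrt\delta$ reducing $\mathcal{E}_{\eta_1}^{\bw}$ to a deterministic rate bound on the good events. The only cosmetic difference is that the paper splits the $\eta_1$ deviation into separate one-sided events $\p_{\bZ,+\eta_1}$ and $\p_{\bZ,-\eta_1}$, whereas you treat $|1/R-1|$ symmetrically; the substance is identical.
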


\begin{proof}[Proof of Lemma \ref{lemma: E_delta1^w adn E_delta1^w given w}] 
We first show the upper bound of $\p_{\bZ} \{ \mathcal{E}_{\eta_2}^\bw  \} \lesssim n^{-1/2}$ following \eqref{eqn: widetilde Id}--\eqref{eqn: berry esseen bd P(E_delta)} from the proof of Theorem \ref{Thm: subG coverage prob}. 
Specifically, we have
$$\p_{\bZ} \{ \mathcal{E}_{\eta_2}^\bw \} \leq \p_{\bZ,0} + \p_{\bZ,\eta_2},$$
where 
\begin{equation}
   \p_{\bZ,0} = \p_\bZ \lbk  (\mathcal{E}_4^{\bw})^c \cup (\mathcal{E}_5^{\bw})^c \cup (\mathcal{E}_6^{\bw})^c \rbk,\quad 
   \P_{\bZ,\eta_2}=\p_\bZ \lbk \mathcal{E}_{\eta_2}^\bw  \cap \mathcal{E}_4^{\bw} \cap \mathcal{E}_5^{\bw} \cap \mathcal{E}_6^{\bw} \rbk,
\label{eqn: P0 and P_delta2 given w} 
\end{equation}
with
\begin{eqnarray}
    \mathcal{E}_4^{\bw} &=& \lbk  \lambda_{\min}\lmk \lsk \bZ\mt\bZ \rsk\inverse\rmk \geq \lsk \sqrt{n} + \widetilde{C}\sqrt{d}+\gamma \rsk^{-2} \rbk, \nonumber\\
    \mathcal{E}_5^{\bw} &=& \lbk \ \lambda_{\max}\lmk \lsk \bZ\mt\bZ \rsk\inverse\rmk \leq \lsk \sqrt{n} - \widetilde{C}\sqrt{d}-\gamma \rsk^{-2} \rbk, \nonumber \\
    \mathcal{E}_6^{\bw} &=& \lbk  \bv\mt\bZ\bZ\mt\bv \leq d + \kappa \rbk.
    \label{eqn: good set of lambdamin or max of ZtZinverse and vtZZtv given w}
\end{eqnarray}

With fixed $\bw$, from Lemmas \ref{lemma: concentration of eigenvalues of ZtZ inverse} and \ref{lemma: concentration of vtZZtv}, we still have
\beq
\P_{\bZ,0} \leq  4 \exp \lsk -\widetilde{c}_{1} \gamma^2 \rsk  + \exp \lmk -\widetilde{c}_{2} \min \lsk \frac{\kappa^2}{d}, \kappa \rsk \rmk.
\eeq
Plugging in $\gamma = \sqrt{ \widetilde{c}_5 \log n}$ defined in Lemma \ref{lemma: E_3 of w}, we have $\exp ( -\widetilde{c}_{1} \gamma^2 ) \leq n^{-1/2}$. 
To make the term $ \exp  \{ -\widetilde{c}_{2} \min  ( \kappa^2/d, \kappa ) \} \leq n^{-1/2}$ we let $\kappa=\sqrt{d}\gamma$ if $\kappa < d$ and $\kappa=\gamma^2$ if $\kappa \geq d$. Combining $\gamma$ and $\kappa$, we have $\p_{\bZ,0} \leq 5/\sqrt{n}$.

For the probability $\p_{\bZ, \eta_2}$, note that on the intersection of the four events of $\p_{\bZ, \eta_2}$ in \eqref{eqn: P0 and P_delta2 given w}, $| L_j\inverse ( \widehat{\beta}_j - \beta_j ) - \bu_j\mt\bZ\mt\bv | \geq \eta_2$ implies that
\beq
\max \lmk \labs \lambda_{\max}\lsk \widetilde{\bI}_d \rsk -1 \rabs, \labs \lambda_{\min}\lsk \widetilde{\bI}_d \rsk -1 \rabs \rmk \sqrt{\bv\mt\bZ\bZ\mt\bv} \geq \eta_2,
\eeq
where $\widetilde{\bI}_d$ is defined in \eqref{eqn: widetilde Id}. 
Following \eqref{eqn: subG coverage distance to normal proxy with widetilde Id}--\eqref{eqn: berry esseen bd P(E_delta)} in the proof of Theorem \ref{Thm: subG coverage prob}, we set $\eta_2 = (C_2 + 1)\max( d, \gamma^2 ) / \sqrt{n}$.
Then there exists a positive integer $N$ such that for any $n > N$, we have $\p_{\bZ, \eta_2}=0$ and  $\p_\bZ \{ \mathcal{E}_{\eta_2}^\bw \} \leq 5/\sqrt{n}$.

We then bound $\p_\bZ \lbk \mathcal{E}_{\eta_1}^\bw \rbk$. 
From \eqref{eqn: betaj/Lj in w and Z for pwr}, we have
\beq
 \mathcal{E}_{\eta_1}^\bw = \lbk  \frac{h}{\sqrt{ \delta } } \labs \frac{1}{\sqrt{1-\bv\mt\bP_\bZ \bv} \sqrt{n\bu_j\mt\lsk \bZ\mt\bZ \rsk \inverse \bu_j}} -  1 \rabs \geq \eta_1 \frac{h}{\sqrt{ \delta } }\rbk 
 {\subset}\ \mathcal{E}_7^{\bw},
\eeq
where 
\beq
\mathcal{E}_7^{\bw} = \lbk \labs \lsk 1-\bv\mt\bP_\bZ \bv \rsk^{-1/2} \lmk n\bu_j\mt\lsk \bZ\mt\bZ \rsk \inverse \bu_j \rmk^{-1/2} -  1 \rabs \geq \eta_1 \rbk.
\eeq
Hence we have
$$
\p_\bZ \{ \mathcal{E}_{\eta_1}^\bw \} \leq \p \{ \mathcal{E}_7^{\bw} \}
\leq   \p_{\bZ, 0} + \p_{\bZ, +\eta_1} + \p_{\bZ, -\eta_1},
$$
where $\p_{\bZ,0}$ is defined in \eqref{eqn: P0 and P_delta2 given w}, $\p_{\bZ, +\eta_1}$ is the probability of 
\beq
\lbk \lsk 1-\bv\mt\bP_\bZ \bv \rsk^{-1/2} \lmk n\bu_j\mt\lsk \bZ\mt\bZ \rsk \inverse \bu_j \rmk^{-1/2} -  1 \geq \eta_1 \rbk
\cap  \mathcal{E}_4^{\bw}
\cap  \mathcal{E}_5^{\bw}
\cap  \mathcal{E}_6^{\bw},
\eeq 
while $\p_{\bZ, -\eta_1}$ is the probability of
\beq
\lbk \lsk 1-\bv\mt\bP_\bZ \bv \rsk^{-1/2} \lmk n\bu_j\mt\lsk \bZ\mt\bZ \rsk \inverse \bu_j \rmk^{-1/2} -  1 \leq -\eta_1 \rbk
\cap  \mathcal{E}_4^{\bw}
\cap  \mathcal{E}_5^{\bw}
\cap  \mathcal{E}_6^{\bw},
\eeq
with $\mathcal{E}_4^{\bw}$, $\mathcal{E}_5^{\bw}$, and $\mathcal{E}_6^{\bw}$ defined in \eqref{eqn: good set of lambdamin or max of ZtZinverse and vtZZtv given w}.
Therefore, we have
\beq
\p_{\bZ, +\eta_1} \leq \p_\bZ \lbk \frac{1}{\sqrt{1-\frac{d+\kappa}{\lsk \sqrt{n} - \widetilde{C} \sqrt{d} - \gamma \rsk ^2}}} \frac{1}{\sqrt{\frac{n}{\lsk \sqrt{n} + \widetilde{C} \sqrt{d} + \gamma \rsk^2}}} - 1 \geq \eta_1 \rbk.
\eeq
Since we have
\beq
& & 
\frac{1}{\sqrt{1-\frac{d+\kappa}{\lsk \sqrt{n} - \widetilde{C} \sqrt{d} - \gamma \rsk ^2}}} \frac{1}{\sqrt{\frac{n}{\lsk \sqrt{n} + \widetilde{C} \sqrt{d} + \gamma \rsk^2}}} - 1 
\\
&=&
\frac{1}{\sqrt{1 - \frac{d + \kappa}{(\sqrt{n} - \widetilde{C} \sqrt{d}- \gamma)^2}}}
\lmk  
1 - \sqrt{1 - \frac{d + \gamma}{(\sqrt{n} - \widetilde{C} \sqrt{d} - \gamma)^2}}
+
\frac{\widetilde{C} \sqrt{d} + \gamma}{\sqrt{n}}
\rmk
\\
&\leq &
C_3 \frac{\max (\sqrt{d}, \gamma)}{\sqrt{n}},
\eeq
where the last inequality holds for sufficiently large $n$ by $d = o(\sqrt{n})$ in Assumption \ref{assumption::regularity-conditions} and $(d + \kappa)/(\sqrt{n} - \widetilde{C} \sqrt{d} - \gamma)^2 \rightarrow 0$,
if we set $\eta_1 > C_3 \max (  \sqrt{d},\gamma  )/\sqrt{n}$, then $\p_{\bZ, +\eta_1} = 0$ for sufficiently large $n$. 

For $\p_{\bZ, -\eta_1}$, we have
\beq
\p_{\bZ, -\eta_1} &\leq & \p_\bZ \lbk  \lambda_{\max}\lmk \lsk \bZ\mt\bZ \rsk\inverse\rmk \leq \lsk \sqrt{n}-\widetilde{C}\sqrt{d}-\gamma \rsk^{-2},  \lmk n\bu_j\mt\lsk \bZ\mt\bZ \rsk \inverse \bu_j \rmk^{-1/2} -  1 \leq -\eta_1\rbk\\
&\leq & \p \lbk \frac{1}{\sqrt{ \frac{n}{\lsk \sqrt{n} - \widetilde{C} \sqrt{d} - \gamma \rsk^2} } }  -1 \leq -\eta_1 \rbk\\
&=& \p \lbk \frac{\widetilde{C} \sqrt{d} + \gamma}{\sqrt{n}} \geq \eta_1 \rbk\\
&\leq & \p \lbk C_4 \frac{\max ( \sqrt{d}, \gamma )}{\sqrt{n}}  \geq \eta_1 \rbk,
\eeq
where in the last inequality, we define $C_4 = \widetilde{C} + 1$.
If we set $\eta_1 > C_4 \max ( \sqrt{d},\gamma  )/\sqrt{n}$, then $\p_{\bZ, -\eta_1} = 0$.

So we choose $\eta_1 = (C_5 + 1) \max ( \sqrt{d},\gamma  )/\sqrt{n}$, where $C_5 = \max(C_3, C_4)$.
Then there exists a positive integer $N$ such that for all $n > N$, we have
$
\p_\bZ \{ \mathcal{E}_{\eta_1}^\bw \} \leq   \p_{\bZ, 0} \leq 5/\sqrt{n}
$.
Together with the bounds for $\p_\bZ \lbk \mathcal{E}_{\eta_2}^\bw \rbk$, we have proved Lemma \ref{lemma: E_delta1^w adn E_delta1^w given w}.
\end{proof}

The following lemma bounds $\Delta_{\bw}$ given $\bw \in \mathcal{E}_3$ in \eqref{eqn: framework of pf of power}.

\begin{lemma}\label{lemma: berry esseen bd conditional on E_2^c}
Under Assumption \ref{assumption::regularity-conditions}, recalling $\bw \in \mathcal{E}_3$ defined in \eqref{eqn: E3 of w for power} and $\Delta_{\bw}$ defined in \eqref{eqn: Delta_w for power}, we have
\beq
\Delta_{\bw} = \sup_{t \in\mathbb{R}} \labs \p\lbk \bu_j\mt\bZ\mt\bv \leq t \rbk - \Phi(t) \rabs \lesssim     \frac{\lsk \log n \rsk^{3/2}}{ \lambda_{\min}^{3/2}(\bV) \sqrt{n}}.
\eeq
\end{lemma}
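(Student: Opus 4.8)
The plan is to exploit the fact that $\Delta_{\bw}$ is a conditional quantity: once $\bw$ is fixed, so is the self-normalized vector $\bv = \bV^{1/2}\bw/(\bw\mt\bV\bw)^{1/2}$, and the only randomness remaining in $\bu_j\mt\bZ\mt\bv = \sum_{i=1}^n (\bu_j\mt\bz_i)\,v_i$ comes from the independent rows $\bz_i$ of $\bZ$. I would therefore reuse the conditional structure already isolated in Step 1 of the proof of Theorem \ref{Thm: subG coverage prob}: given $\bv$, the summands $(\bu_j\mt\bz_i)\,v_i$ are independent and mean zero, with variances $v_i^2$ summing to $\sum_{i=1}^n v_i^2 = 1$ (because $\|\bv\|_2 = 1$) and third absolute moments $\e_{\bZ}|(\bu_j\mt\bz_i)v_i|^3 = |v_i|^3\,\e_{\bZ}|\bu_j\mt\bz_i|^3 \le C_1|v_i|^3$, where the uniform bound $\e_{\bZ}|\bu_j\mt\bz_i|^3 \le C_1$ follows from the sub-Gaussian norm estimate $\|\bu_j\mt\bz_i\|_{\psi_2}^2 \le C K_z^2$ established in \eqref{eqn: uj^T z_((i)) subG norm bd}.

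Applying the classical Berry--Esseen theorem for independent, non-identically distributed summands, with total standard deviation equal to $1$, then yields the fixed-$\bv$ estimate $\Delta_{\bw} \lesssim \sum_{i=1}^n |v_i|^3$. This is precisely the step that, in the proof of Theorem \ref{Thm: subG coverage prob}, was followed by an \emph{average} over $\bv$ and an appeal to the sub-Gaussian tail of Lemma \ref{lemma: self-normlz epsilon sub-G property} to produce the unconditional bound \eqref{eqn: berry esseen bd control Delta with self-normalized lemma}. Here, since we are conditioning on the event $\mathcal{E}_3$ of \eqref{eqn: E3 of w for power}, I would instead invoke the deterministic coordinatewise bound $\max_{1\le i\le n}|v_i| \le \sqrt{3/[2c\lambda_{\min}(\bV)]}\,\sqrt{\log n/n}$ that defines $\mathcal{E}_3$. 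Bounding each cube by the cube of this maximum and summing the $n$ terms gives $\sum_{i=1}^n |v_i|^3 \le n\max_{1\le i\le n}|v_i|^3 \lesssim (\log n)^{3/2}\lambda_{\min}^{-3/2}(\bV)\,n^{-1/2}$, which is exactly the claimed rate.

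I do not expect a genuine obstacle; the only point requiring care is the passage from the random cube-sum $\e_{\bv}(\sum_i |v_i|^3)$ used in Theorem \ref{Thm: subG coverage prob} to the deterministic cube-sum on the conditioning event. The $\log n$ factor carried by each $|v_i|$ is the price of the union bound implicit in the definition of $\mathcal{E}_3$, and this is what inflates the power of $\log n$ from $1/2$ in \eqref{eqn: berry esseen bd control Delta with self-normalized lemma} to $3/2$ here. One could in fact keep the sharper $\sum_i |v_i|^3 \le \max_i|v_i|\cdot\sum_i v_i^2 = \max_i|v_i|$, saving a factor of order $(\log n/\lambda_{\min}(\bV))$, but the cruder per-coordinate bound already suffices and matches the target, so I would present the latter. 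The real content is simply the correct conditioning on $\bw$ so that the Berry--Esseen theorem applies to genuinely independent summands, together with reading the control of $\max_i|v_i|$ directly off $\mathcal{E}_3$.
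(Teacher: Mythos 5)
Your proposal is correct and follows essentially the same route as the paper's proof: condition on $\bw$ so that $\bu_j\mt\bZ\mt\bv=\sum_i(\bu_j\mt\bz_i)v_i$ is a sum of independent mean-zero terms with variances summing to one, bound the third moments via $\e|\bu_j\mt\bz_i|^3\le C_1$ from \eqref{eqn: uj^T z_((i)) subG norm bd} and the deterministic coordinate bound defining $\mathcal{E}_3$, and apply the Berry--Esseen theorem for non-identically distributed summands. Your side remark that $\sum_i|v_i|^3\le\max_i|v_i|$ would give a sharper rate is a valid observation but not needed for the stated bound.
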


\begin{proof}[Proof of Lemma \ref{lemma: berry esseen bd conditional on E_2^c}]
Recall that $\bu_j\mt\bZ\mt\bv= \sum_{i=1}^n \bu_j\mt \bz_i v_i$, where $\bz_i\in \mathbb{R}^d$ is the $i$th row of $\bZ$. 
As $\bw$ is given, $\bv$ defined in Lemma \ref{lemma: self-normlz epsilon sub-G property} is given, which implies that $\bu_j\mt \bz_i v_i$, $i=1,\dots,n$ are independent with zero mean, $\e (  \bu_j\mt \bz_i v_i  )^2 = v_i^2$, and
\beq
\e\labs \bu_j\mt \bz_i v_i \rabs^3 = \labs v_i \rabs^3 \e \labs \bu_j\mt \bz_i \rabs^3 {\leq} \e \labs \bu_j\mt \bz_i \rabs^3 \lmk \frac{3}{ 2c \lambda_{\min}(\bV) } \rmk^{3/2} \frac{\lsk \log n \rsk^{3/2}}{n^{3/2}},
\eeq
where the last step follows from the fact that $\bw \in \mathcal{E}_3$ defined in \eqref{eqn: E3 of w for power}.
By \eqref{eqn: uj^T z_((i)) subG norm bd}, we have $\e | \bu_j\mt \bz_i |^3 \leq C_1$ where $C_1$ is related to $K_z$.
From all these facts, the result follows by the Berry--Esseen bound for independent variables.
\end{proof} 

The following lemma bounds $\Gamma_{\bw, \eta_1, \eta_2} (h)$ given $\bw \in \mathcal{E}_1 \cap \mathcal{E}_2$ in \eqref{eqn: framework of pf of power}.

\begin{lemma}\label{lemma: Gamma_w,eta1,eta2 bd}
Under Assumption \ref{assumption::regularity-conditions}, given $\bw \in \mathcal{E}_1 \cap \mathcal{E}_2$ defined in \eqref{eqn: E1 of w for power} and \eqref{eqn: E2 of w for power}, suppose that $\eta_1 = (C_5 + 1) \max(\sqrt{d}, \gamma) / \sqrt{n}$ and $\eta_2 = (C_2 + 1) \max (d, \gamma^2) / \sqrt{n}$ are from Lemma \ref{lemma: E_delta1^w adn E_delta1^w given w} and $\gamma = \sqrt{\widetilde{c}_5 \log n}$ is defined in Lemma \ref{lemma: E_1 of w}.
Then there exists an integer $N>0$ such that for all $n > N$, we have
\beq
\Gamma_{\bw, \eta_1, \eta_2}(h) 
\leq 
\sqrt{\frac{2}{\pi}} \lbk \frac{2 \sqrt{2}}{ \sqrt{ \lambda_{\min}(\bV) } } \sqrt{1 + \gamma^2} \lmk z_\alpha + \sqrt{\log n} + C_6 \max (\sqrt{d}, \gamma) \rmk \eta_1  + \eta_2 \rbk,
\eeq
where $C_6$ is an absolute constant.
\end{lemma}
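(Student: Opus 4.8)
The plan is to reduce $\Gamma_{\bw,\eta_1,\eta_2}(h)$ to the increment of the standard normal distribution function over a short interval and then bound that increment by (interval width) $\times$ (a pointwise density bound), the only subtlety being that the width carries the a priori unbounded factor $h/\sqrt{\delta}$. Writing $\overline{\Phi} = 1-\Phi$ and setting
$$
a = z_\alpha - \frac{h}{\sqrt{\delta}}(1+\eta_1) - \eta_2, \qquad b = z_\alpha - \frac{h}{\sqrt{\delta}}(1-\eta_1) + \eta_2,
$$
one has $\Gamma_{\bw,\eta_1,\eta_2}(h) = \overline{\Phi}(a) - \overline{\Phi}(b) = \Phi(b) - \Phi(a)$, with $a < b$ and $b-a = 2(\eta_1 h/\sqrt{\delta} + \eta_2)$; here $\delta = \bw\mt\bV\bw/n$ is a fixed number because $\bw$ is fixed.

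By the mean value theorem there is $\xi \in (a,b)$ with $\Gamma_{\bw,\eta_1,\eta_2}(h) = (b-a)\phi(\xi) = 2\eta_2\phi(\xi) + 2\eta_1(h/\sqrt{\delta})\phi(\xi)$, where $\phi$ is the standard normal density. The $\eta_2$ part is immediate, since $2\eta_2\phi(\xi) \leq 2\eta_2/\sqrt{2\pi} = \sqrt{2/\pi}\,\eta_2$, which is the second summand of the claimed bound. On the event $\mathcal{E}_1\cap\mathcal{E}_2$ the constraints $\bw\mt\bw > n/2$ and $\bw\mt\bV\bw < (1+\gamma^2)n$ yield $\lambda_{\min}(\bV)/2 \leq \delta \leq 1+\gamma^2$, hence $\sqrt{\delta} \leq \sqrt{1+\gamma^2}$ and $1/\sqrt{\delta} \leq \sqrt{2/\lambda_{\min}(\bV)}$; these two inequalities are the sources of the $\sqrt{1+\gamma^2}$ and $\lambda_{\min}^{-1/2}(\bV)$ factors in the statement.

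The crux is to control the remaining term $2\eta_1(h/\sqrt{\delta})\phi(\xi)$ uniformly over all $h>0$, i.e. to show that $(h/\sqrt{\delta})\phi(\xi)$ stays bounded by an $h$-free quantity despite the linear growth of $h/\sqrt{\delta}$. The mechanism is the competition between this linear growth and the Gaussian decay of $\phi(\xi)$: as $h$ increases, both $a$ and $b$ drift to $-\infty$, so $\xi$ sits deep in the left tail and $\phi(\xi)$ decays like $\exp\{-(h/\sqrt{\delta})^2/2\}$. I would make this quantitative by a case split on the size of $h/\sqrt{\delta}$ relative to the threshold $z_\alpha + \sqrt{\log n} + C_6\max(\sqrt{d},\gamma)$: when $h/\sqrt{\delta}$ lies below the threshold one bounds $\phi(\xi)\leq 1/\sqrt{2\pi}$ and replaces $h/\sqrt{\delta}$ by the threshold directly; when $h/\sqrt{\delta}$ exceeds the threshold, the right endpoint $b$ is negative and bounded away from $0$ (using $\eta_1,\eta_2 = o(1)$ and $\gamma = \sqrt{\widetilde{c}_5\log n}$ for $n$ large), so $\phi(\xi)\leq\phi(b)$ and the elementary inequality $x\exp(-x^2/2)\leq e^{-1/2}$ applied to $x=|b|$ shows $(h/\sqrt{\delta})\phi(\xi)$ is again controlled by the same threshold, up to the recorded constant factor $2\sqrt{2}\,\lambda_{\min}^{-1/2}(\bV)\sqrt{1+\gamma^2}$ coming from the $\delta$ bounds of the previous paragraph. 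Collecting the two summands gives the stated estimate for all $n$ exceeding some $N$, where $n>N$ absorbs the requirement that $\eta_1,\eta_2$ be small enough for the endpoint-sign analysis in the second case. The main obstacle is exactly this uniform-in-$h$ step: one must quantify the tail-decay versus linear-growth tradeoff cleanly enough that the resulting constant is free of $h$, which is precisely what permits the final power theorem to hold with an absolute constant and with no dependence on the local signal strength $h$.
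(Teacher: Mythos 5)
Your proposal is correct and follows essentially the same route as the paper: a mean-value-theorem bound on the normal CDF increment, the bounds $\lambda_{\min}(\bV)/2 \le \delta \le 1+\gamma^2$ extracted from $\mathcal{E}_1 \cap \mathcal{E}_2$, and a two-case split on signal size in which the large-signal case is handled by Gaussian tail decay overwhelming the linear factor $h/\sqrt{\delta}$. The only cosmetic differences are that the paper splits on $h$ versus $\tau(z_\alpha,\gamma)=2\sqrt{1+\gamma^2}\,\bigl(z_\alpha+\sqrt{\log n}+C_6\max(\sqrt d,\gamma)\bigr)$ rather than on $h/\sqrt{\delta}$ versus the unscaled threshold, and in the large case it bounds $\Gamma_{\bw,\eta_1,\eta_2}(h)\le\Phi(-\sqrt{\log n})\le (2\pi n\log n)^{-1/2}$ directly instead of invoking $x e^{-x^2/2}\le e^{-1/2}$.
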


\begin{proof}[Proof of Lemma \ref{lemma: Gamma_w,eta1,eta2 bd}]
Define 
\begin{equation}
\tau(z_\alpha, \gamma) = 2 \sqrt{1 + \gamma^2} \lmk z_\alpha + \sqrt{\log n} + C_6 \max (\sqrt{d}, \gamma) \rmk.
\label{eqn: tau(z_alpha, gamma) for pwr}  
\end{equation}
We will discuss $h \geq \tau(z_\alpha, \gamma)$ and $h < \tau(z_\alpha, \gamma)$ separately.

If $h \geq \tau(z_\alpha, \gamma)$, we have
\begin{equation}
\Gamma_{\bw, \eta_1, \eta_2} (h)
\leq 
1 - \overline{\Phi} \lsk z_\alpha - \frac{h}{\sqrt{\delta}} + \eta_1 \frac{h}{\sqrt{\delta}} + \eta_2 \rsk 
=
\Phi \lsk z_\alpha - \frac{h}{\sqrt{\delta}} + \eta_1 \frac{h}{\sqrt{\delta}} + \eta_2 \rsk.
\label{eqn: large h Gamma_w,eta1,eta2 bd step 0}    
\end{equation}
Since $d = o(\sqrt{n})$ in Assumption \ref{assumption::regularity-conditions}, there exists a positive integer $N$ such that for any $n>N$,  we have $\eta_1 \leq 1/2$ and
\beq
\eta_2 
=
(C_2 + 1) \frac{\max^2(\sqrt{d}, \gamma)}{\sqrt{n}}
\leq
\frac{(C_2 + 1) \max(\sqrt{d}, \gamma)}{2 (C_5 + 1)} 
=
C_6 \max(\sqrt{d}, \gamma),
\eeq
where $C_6 = (C_2 + 1) / [2(C_5 + 1)]$.
Hence
\begin{eqnarray}
\Phi\lsk z_\alpha - \frac{h}{\sqrt{\delta}} + \eta_1 \frac{h}{ \sqrt{ \delta } } + \eta_2 \rsk 
&{\leq}& 
\Phi\lsk z_\alpha - \frac{h}{\sqrt{\delta}} + \frac{1}{2} \frac{h}{\sqrt{\delta}} + C_6 \max\{ \sqrt{d}, \gamma \} \rsk
\nonumber \\
& {\leq} & 
\Phi\lsk z_\alpha - \frac{1}{2}\frac{h}{\sqrt{1 + \gamma^2}} + C_6 \max \{\sqrt{d}, \gamma \} \rsk
\label{eqn: Phi (z_alpha - h/sqrt(delta) + eta1xx + eta2) bd}    
\\
&\leq &
\Phi \lsk -\sqrt{\log n} \rsk
\label{eqn: Phi (z_alpha - h/sqrt(delta) + eta1xx + eta2) leq Phi (- sqrt log n)}
\\
&\leq &
\frac{1}{\sqrt{2 \pi}} \frac{1}{\sqrt{ \log n} \sqrt{n}},
\label{eqn: Phi (z_alpha - h/sqrt(delta) + eta1xx + eta2) leq (2 pi log n n)pwr-0.5}
\end{eqnarray}
where \eqref{eqn: Phi (z_alpha - h/sqrt(delta) + eta1xx + eta2) bd} follows from $\bw \in \mathcal{E}_2$ defined in \eqref{eqn: E2 of w for power},
\eqref{eqn: Phi (z_alpha - h/sqrt(delta) + eta1xx + eta2) leq Phi (- sqrt log n)} replaces $h$ with $\tau(z_\alpha, \gamma)$ in \eqref{eqn: tau(z_alpha, gamma) for pwr},
and
\eqref{eqn: Phi (z_alpha - h/sqrt(delta) + eta1xx + eta2) leq (2 pi log n n)pwr-0.5} follows from Lemma \ref{lemma: gaussian tail}. 

Combining \eqref{eqn: large h Gamma_w,eta1,eta2 bd step 0}--\eqref{eqn: Phi (z_alpha - h/sqrt(delta) + eta1xx + eta2) leq (2 pi log n n)pwr-0.5}, for $h \geq \tau(z_\alpha, \gamma)$, we have, 
\begin{equation}
\Gamma_{\bw, \eta_1, \eta_2}(h) \leq \frac{1}{\sqrt{2 \pi}} \frac{1}{\sqrt{ \log n} \sqrt{n}}.
\label{eqn: large h Gamma_w,eta1,eta2 bd}
\end{equation}

If $h < \tau(z_\alpha, \gamma)$, we have
\begin{eqnarray}
\Gamma_{\bw, \eta_1, \eta_2}(h) 
& \leq & 
\sqrt{{2}/{\pi}} \lsk {h} \eta_1 / {\sqrt{\delta}}  + \eta_2 \rsk 
\label{eqn: small h Gamma_w,eta1,eta2 bd step 0}\\
& \leq &
\sqrt{{2}/{\pi}} \lsk \sqrt{ {2}/{\lambda_{\min}(\bV)} }\tau(z_\alpha, \gamma) \eta_1  + \eta_2  \rsk 
\label{eqn: small h Gamma_w,eta1,eta2 bd step i}
\\
& \leq &
\sqrt{\frac{2}{\pi}} \lbk \frac{2 \sqrt{2}}{ \sqrt{ \lambda_{\min}(\bV) } } \sqrt{1 + \gamma^2} \lmk z_\alpha + \sqrt{\log n} + C_6 \max (\sqrt{d}, \gamma) \rmk \eta_1  + \eta_2 \rbk.
\label{eqn: small h Gamma_w,eta1,eta2 bd step ii}   
\end{eqnarray}
where \eqref{eqn: small h Gamma_w,eta1,eta2 bd step 0} follows from the mean value theorem, 
\eqref{eqn: small h Gamma_w,eta1,eta2 bd step i} follows from $\delta \geq \lambda_{\min}(\bV) \bw\mt \bw/n$ and $\bw\mt\bw/n > 1/2$ in $\mathcal{E}_1$ defined in \eqref{eqn: E1 of w for power},
and \eqref{eqn: small h Gamma_w,eta1,eta2 bd step ii} follows from \eqref{eqn: tau(z_alpha, gamma) for pwr}.

Comparing \eqref{eqn: small h Gamma_w,eta1,eta2 bd step ii} and \eqref{eqn: large h Gamma_w,eta1,eta2 bd}, we choose the larger one \eqref{eqn: small h Gamma_w,eta1,eta2 bd step ii} to complete the proof.
\end{proof}

\begin{proof}[Proof of Theorem \ref{theorem: power}]
We prove Theorem \ref{theorem: power} by bounding the terms in \eqref{eqn: framework of pf of power}. 
From Lemmas \ref{lemma: E_1 of w}--\ref{lemma: E_3 of w}, we have
\begin{equation}
\e_{\bw} \lmk \mathcal{I}\lbk \bw \in \mathcal{E}_{\mathrm{prop}}^c \rbk \rmk  
= 
\p \lbk \bw \in \mathcal{E}_{\mathrm{prop}}^c \rbk 
\leq
\p \lsk \mathcal{E}_1^c \rsk + \p \lsk \mathcal{E}_2^c \rsk + \p \lsk \mathcal{E}_3^c \rsk
\leq 
\exp \lsk -\widetilde{c}_4 n \rsk + {2}/{\sqrt{n}} + {4}/{\sqrt{n}}.
\label{eqn: bd of P(w in E_prop^c)}    
\end{equation}
From Lemma \ref{lemma: E_delta1^w adn E_delta1^w given w}, we have
\begin{equation}
\e_\bw \lmk \mathcal{I}\lbk \bw \in \mathcal{E}_{\mathrm{prop}} \rbk \lsk  \p_{\bZ}\lbk \mathcal{E}_{\eta_1}^{\bw} \rbk + \p_{\bZ}\lbk \mathcal{E}_{\eta_2}^{\bw} \rbk \rsk \rmk \lesssim n^{-1/2}.
\label{eqn: bd of Ew I(w in E_prop) P(E_eta1^w) + P(E_eta2^w)}    
\end{equation}
From Lemma \ref{lemma: berry esseen bd conditional on E_2^c}, we have
\begin{equation}
\e_\bw \lmk \mathcal{I}\lbk \bw \in \mathcal{E}_{\mathrm{prop}} \rbk \Delta_{\bw} \rmk \lesssim     \frac{\lsk \log n \rsk^{3/2}}{ \lambda_{\min}^{3/2}(\bV) \sqrt{n}}.
\label{eqn: bd of Ew I(w in E_prop) Delta_w }    
\end{equation}
From Lemma \ref{lemma: Gamma_w,eta1,eta2 bd}, we have
\begin{eqnarray}
\e_\bw \lmk \mathcal{I}\lbk \bw \in \mathcal{E}_{\mathrm{prop}} \rbk \Gamma_{\bw, \eta_1,\eta_2}(h) \rmk
& \leq & 
\sqrt{\frac{2}{\pi}} \lbk \frac{2 \sqrt{2}}{ \sqrt{ \lambda_{\min}(\bV) } } \sqrt{1 + \gamma^2} \lmk z_\alpha + \sqrt{\log n} + C_6 \max (\sqrt{d}, \gamma) \rmk \eta_1  + \eta_2 \rbk \nonumber\\
& \lesssim &
 \frac{\sqrt{\log n} \max (d, \log n)}{\sqrt{\lambda_{\min}(\bV)} \sqrt{n}},
\label{eqn: bd of Ew I(w in E_prop) Gamma_w,eta1,eta2}      
\end{eqnarray}  
where $\eta_1 = (C_5 + 1) {\max ( \sqrt{d},\gamma )}/{\sqrt{n}}$ and $
\eta_2 = (C_2 + 1) {\max ( d, \gamma^2)}/{\sqrt{n}}$ are defined in \eqref{eqn: eta1 and eta2 for pwr}, and $\gamma = \sqrt{\widetilde{c}_5 \log n}$ is defined in Lemma \ref{lemma: E_1 of w}.

By Lemmas \ref{lemma: E_1 of w}--\ref{lemma: E_3 of w} and Lemmas \ref{lemma: E_delta1^w adn E_delta1^w given w}--\ref{lemma: Gamma_w,eta1,eta2 bd},
collecting all the bounds in \eqref{eqn: bd of P(w in E_prop^c)}--\eqref{eqn: bd of Ew I(w in E_prop) Gamma_w,eta1,eta2} completes the proof.
\end{proof}

\subsection{Proofs of Corollary \ref{corollary: power pihat_G(h,rho)} }
\label{subsec: pf of power pihat_G(h, rho)}

We will prove a more general corollary below with Corollary \ref{corollary: power pihat_G(h,rho)} being a special case when $K=1$:

\begin{corollary}
\textup{(Block-diagonal correlation structure)}
Under Assumption \ref{assumption::regularity-conditions}, 
assume $\bw \sim \mathcal{N} ( \bzero_n, \bI_n )$. 
Let $\bV=\diag\{ \bV_1\ \dots\ \bV_K \}\in \mathbb{R}^{n \times n}$, 
where $\bV_k=\rho_k \bone_{n_k}\bone_{n_k}\mt + (1-\rho_k) \bI_{n_k}$ for $k=1,\dots,K$, and $K$ is a constant integer. 
The sizes of diagonal blocks in $\bV$ satisfy $\sum_{k=1}^K n_k=n$, and for any $k=1,\dots,K$, $| n_k/n - r_k | \leq 1/\sqrt{n}$, where $r_k \in (0,1]$ are constants for $k=1,\dots,K$ such that $\sum_{k=1}^K r_k = 1$. 
Given any absolute constant $c_{\min} \in (0,1)$, for any $h \geq 0$ and $\rho_k \in   [0 , 1-c_{\mathrm{min}}]$ for $ k=1,\dots,K$, we have
\begin{equation}
\labs \p \lbk \frac{\widehat{\beta}_j}{L_j} > z_\alpha \rbk -  {\pi} (h,\rho_1,\dots,\rho_K,r_1,\dots,r_K) \rabs 
\lesssim  
\frac{\sqrt{\log n} \max \lsk  d, \log n \rsk}{ \sqrt{n} },
\label{eqn: power difference Gaussian and Diagonal V}
\end{equation}
where 
$$ {\pi} (h,\rho_1,\dots,\rho_K,r_1,\dots,r_K) = \mathbb{E}\Phi \lsk \frac{h}{\sqrt{\sum_{k=1}^K r_k \lmk \rho_k Z_k^2 + 1 -\rho_k  \rmk}}  - z_\alpha \rsk,$$
and $Z_1,\dots,Z_K \overset{iid}{\sim} \mathcal{N}(0,1)$.
\label{corollary: power pihat_G(h, vecrho, vecr)}
\end{corollary}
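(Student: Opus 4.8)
The plan is to derive this corollary directly from the general power theorem, Theorem~\ref{theorem: power}, by (a) showing that the block structure forces $\lambda_{\min}(\bV)$ to stay bounded away from $0$, so that the general bound collapses to the stated rate, and (b) replacing the random variable $\delta=\bw\mt\bV\bw/n$ that enters $\pi(h,\bV)$ by the limiting mixture $\sum_{k=1}^K r_k(\rho_k Z_k^2+1-\rho_k)$ defining the target, with an error that is $O(n^{-1/2})$ uniformly in $h\ge 0$.

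First I would compute the spectrum of $\bV$. Each block $\bV_k=(1-\rho_k)\bI_{n_k}+\rho_k\bone_{n_k}\bone_{n_k}\mt$ has eigenvalue $1+(n_k-1)\rho_k$ with eigenvector $\bone_{n_k}$ and eigenvalue $1-\rho_k$ with multiplicity $n_k-1$; since $\rho_k\in[0,1-c_{\mathrm{min}}]$ and $n_k\ge 1$, block-diagonality gives $\lambda_{\min}(\bV)=\min_k(1-\rho_k)\ge c_{\mathrm{min}}$, and $\tr(\bV)=\sum_k n_k=n$ is consistent with Assumption~\ref{assumption::regularity-conditions}. Substituting $\lambda_{\min}^{-3/2}(\bV)\le c_{\mathrm{min}}^{-3/2}$ into Theorem~\ref{theorem: power} yields $|\P\{\widehat\beta_j/L_j>z_\alpha\}-\pi(h,\bV)|\lesssim (\log n)^{1/2}\max(d,\log n)\,n^{-1/2}$, which is exactly the claimed rate, so it remains only to pass from $\pi(h,\bV)$ to $\pi(h,\rho_1,\dots,\rho_K,r_1,\dots,r_K)$.

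For this passage I would build a coupling. Writing $\bw=[\bw_1\mt,\dots,\bw_K\mt]\mt$ with $\bw_k\in\mathbb{R}^{n_k}$ and setting $\widetilde Z_k=\bone_{n_k}\mt\bw_k/\sqrt{n_k}$, the $\widetilde Z_k$ are i.i.d.\ $\mathcal N(0,1)$ and $\bw\mt\bV\bw=\sum_k[\rho_k n_k\widetilde Z_k^2+(1-\rho_k)\bw_k\mt\bw_k]$, so $\delta=\sum_k(n_k/n)\rho_k\widetilde Z_k^2+\bw\mt\bLambda'\bw/n$ with $\bLambda'=\diag\{(1-\rho_1)\bI_{n_1},\dots,(1-\rho_K)\bI_{n_K}\}$. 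Taking $Z_k=\widetilde Z_k$ in the target makes $\delta^\star:=\sum_k r_k\rho_k\widetilde Z_k^2+\sum_k r_k(1-\rho_k)$ the integrand variable of $\pi(h,\rho_1,\dots,\rho_K,r_1,\dots,r_K)$, so that $|\pi(h,\bV)-\pi(h,\rho_1,\dots,\rho_K,r_1,\dots,r_K)|\le \mathbb{E}|g(\delta)-g(\delta^\star)|$ with $g(x)=\Phi(hx^{-1/2}-z_\alpha)$. The deviation $\delta-\delta^\star$ splits into a block-size mismatch term $\sum_k(n_k/n-r_k)\rho_k\widetilde Z_k^2$, controlled by $|n_k/n-r_k|\le n^{-1/2}$, and a law-of-large-numbers term $\bw\mt\bLambda'\bw/n-\sum_k r_k(1-\rho_k)$, controlled by the $\chi^2$ deviation estimates underlying Lemmas~\ref{lemma: power pihat_G(h, vecrho, vecr) Concentration} and~\ref{lemma: pwr pihat_G(h, vecrho, vecr) bd dist conc}; since $K$ is fixed, these combine to $\mathbb{E}|\delta-\delta^\star|\lesssim n^{-1/2}$.

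The crux, and the step I expect to be the main obstacle, is converting $\mathbb{E}|\delta-\delta^\star|\lesssim n^{-1/2}$ into a bound on $\mathbb{E}|g(\delta)-g(\delta^\star)|$ that holds uniformly in $h$: the naive Lipschitz estimate for $\Phi$ carries a factor $h$ that is unbounded. The resolution is to differentiate directly, $g'(x)=-\tfrac12 x^{-1}\,[\,hx^{-1/2}\phi(hx^{-1/2}-z_\alpha)\,]$, and to exploit $\sup_{t\ge 0}t\,\phi(t-z_\alpha)<\infty$, giving $|g'(x)|\lesssim 1/x$ with an absolute constant. Since $\delta^\star\ge\sum_k r_k(1-\rho_k)\ge c_{\mathrm{min}}$ deterministically and, on the event $\mathcal E_1=\{\bw\mt\bw\ge n/2\}$, $\delta\ge\lambda_{\min}(\bV)\,\bw\mt\bw/n\ge c_{\mathrm{min}}/2$, the mean value theorem gives $|g(\delta)-g(\delta^\star)|\lesssim|\delta-\delta^\star|$ on $\mathcal E_1$; off $\mathcal E_1$ I would bound the integrand crudely by $1$ and invoke Lemma~\ref{lemma: E_1 of w} for $\P(\mathcal E_1^c)\le e^{-\widetilde c_4 n}$. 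Assembling the three contributions through the triangle inequality gives the stated rate, and specializing to $K=1$, $r_1=1$, $n_1=n$ recovers Corollary~\ref{corollary: power pihat_G(h,rho)} with $\chi_1^2=Z_1^2$.
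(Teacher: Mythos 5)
Your proposal is correct, and it reaches the conclusion by a genuinely different route at the one step that actually requires care. Both you and the paper reduce the problem, via Theorem \ref{theorem: power} and $\lambda_{\min}(\bV)=\min_k(1-\rho_k)\ge c_{\mathrm{min}}$, to bounding $|\pi(h,\bV)-\pi(h,\rho_1,\dots,\rho_K,r_1,\dots,r_K)|$, and both identify the same decomposition $\bw\mt\bV\bw=\sum_k n_k\rho_k\widetilde Z_k^2+\bw\mt\bLambda'\bw$ (the paper obtains it by rotating with the eigenbasis and invoking $\bQ\mt\bw\deq\bw$, you by computing $\bw_k\mt\bV_k\bw_k$ directly; the within-block dependence of $\widetilde Z_k$ and $\bw_k\mt\bw_k$ in your coupling is harmless since the $(1-\rho_k)$-part is replaced by a constant in the limit). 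Where you diverge is in handling uniformity over $h\ge 0$: the paper splits into a large-$h$ regime, where both $\Phi$ terms are within $O(n^{-1/2})$ of $1$ by Gaussian tail bounds (Lemmas \ref{lemma: gaussian tail} and \ref{lemma: power pihat_G(h, vecrho, vecr) I2 I3}), and a small-$h$ regime $h\lesssim\log n$, where the naive Lipschitz constant $h/\sqrt{2\pi}$ is tolerable and is combined with the $L^1$ bounds of Lemma \ref{lemma: pwr pihat_G(h, vecrho, vecr) bd dist conc}, yielding $O(\log n/\sqrt n)$ for this step. You instead observe that $g(x)=\Phi(hx^{-1/2}-z_\alpha)$ satisfies $|g'(x)|=\tfrac{1}{2x}\,t\,\phi(t-z_\alpha)$ with $t=hx^{-1/2}$, and $\sup_{t\ge0}t\,\phi(t-z_\alpha)<\infty$, so $|g'(x)|\lesssim 1/x$ with a constant free of $h$; restricting to $\{\bw\mt\bw\ge n/2\}$ keeps both $\delta$ and $\delta^\star$ above $c_{\mathrm{min}}/2$, and the mean value theorem then converts $\e|\delta-\delta^\star|\lesssim n^{-1/2}$ into $\e|g(\delta)-g(\delta^\star)|\lesssim n^{-1/2}$ in one stroke. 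Your argument is shorter, avoids the case split and most of the auxiliary lemmas of Section \ref{subsec: pf of power pihat_G(h, rho)}, and gives a slightly sharper $O(n^{-1/2})$ for this comparison step (versus the paper's $O(\log n/\sqrt n)$); either bound is absorbed by the $(\log n)^{1/2}\max(d,\log n)n^{-1/2}$ rate inherited from Theorem \ref{theorem: power}, so the final statement is unchanged.
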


In this corollary and Corollary \ref{corollary: power pihat_G(h,rho)} in the main paper, the Gaussian assumption is not essential. we adopt it only for theoretical convenience.
From \eqref{eqn: power difference general} in Theorem \ref{theorem: power}, we only need to show that
\begin{equation}
 | {\pi}(h, \bV) -  {\pi} (h,\rho_1,\dots,\rho_K,r_1,\dots,r_K)| \lesssim \frac{\sqrt{\log n} \max \lsk  d, \log n \rsk}{ \sqrt{n}}.
 \label{eqn: NTS in pf of piwidetilde(h, vecrho, vecr)}
\end{equation}

In Corollary \ref{corollary: power pihat_G(h, vecrho, vecr)},   let  $\bQ\bLambda \bQ\mt$ denote the eigen-decomposition of $\bV$.
The diagonal matrix $\bLambda$ consists of eigenvalues of $\bV$, which are $n_k \rho_k + 1 - \rho_k$ with multiplicity 1, and $1-\rho_k$ with multiplicity $n_k-1$ for $k=1,\dots,K$.

Under the condition $\bw \sim \mathcal{N}(\bzero, \bI_n)$ and using the fact that $\bQ\mt \bw \overset{L}{=} \bw$, we have
\beq
\pi (h,\bV)=\e \Phi \lsk \frac{h}{\sqrt{\bw\mt\bV\bw/n}} - z_\alpha \rsk = \e \Phi \lsk \frac{h}{\sqrt{\bw\mt\bLambda\bw/n}} - z_\alpha \rsk.
\eeq

Note that $\bw\mt\bLambda\bw=\sum_{k=1}^K n_k \rho_k w_k^2 + \bw\mt \bLambda' \bw$, where $\bLambda'= \mathrm{diag}\{ (1-\rho_1)\bI_{n_1}, \dots, (1-\rho_K) \bI_{n_K} \} \in \mathbb{R}^{n \times n}$.
Since
\beq
\frac{\bw\mt \bLambda \bw}{n} = \sum_{k=1}^K \frac{n_k}{n} \rho_k w_k^2 + \frac{\bw\mt\bLambda'\bw}{n} \overset{L}{\longrightarrow} \sum_{k=1}^K r_k \lmk \rho_k w_k^2 + (1-\rho_k) \rmk,
\eeq
by the Portmanteau Theorem, we have
\beq
 {\pi}(h,\bV) \overset{L}{\longrightarrow}  \mathbb{E}\Phi \lsk \frac{h}{\sqrt{\sum_{k=1}^K r_k \lmk \rho_k w_k^2 + 1 -\rho_k  \rmk}}  - z_\alpha \rsk.
\eeq
The following proof only characterizes the convergence rate.

\begin{proof}[Proof of Corollary \ref{corollary: power pihat_G(h, vecrho, vecr)}]

First, from the definition of $\pi(h,\bV)$ and $\pi(h,\rho_1,\dots,\rho_K,r_1,\dots,r_K)$, we have
\begin{eqnarray}
& & \labs \pi(h,\bV) - \pi(h,\rho_1,\dots,\rho_K,r_1,\dots,r_K) \rabs \nonumber\\
&\leq & \mathbb{E}\labs \Phi\lsk \frac{h}{\sqrt{\bw\mt\bLambda\bw/n}} - z_\alpha \rsk - \Phi \lsk \frac{h}{\sqrt{\sum_{k=1}^K r_k \lmk \rho_k w_k^2 + 1 -\rho_k  \rmk}}  - z_\alpha \rsk \rabs \nonumber\\
& {=} & \mathbb{E}\mathcal{I}_{\lbk \mathcal{E}_1 \rbk} \labs \Phi\lsk \frac{h}{\sqrt{\bw\mt\bLambda\bw/n}} - z_\alpha \rsk - \Phi \lsk \frac{h}{\sqrt{\sum_{k=1}^K r_k \lmk \rho_k w_k^2 + 1 -\rho_k  \rmk}}  - z_\alpha \rsk \rabs + I_1 \nonumber
\\
&\leq & 2 \p \lbk \mathcal{E}_1 \rbk + I_1,
\label{eqn: power pihat_G(h, vecrho, vecr) unified inequality}
\end{eqnarray}
where
\beq
\mathcal{E}_1 = \lbk \labs \frac{\bw\mt \bLambda' \bw}{n} - \sum_{k=1}^K r_k (1-\rho_k) \rabs > \frac{1}{2} \sum_{k=1}^K r_k (1-\rho_k) \rbk,
\eeq
and
\beq
I_1= \mathbb{E}\mathcal{I}_{\lbk \mathcal{E}_1^c \rbk} \labs \Phi\lsk \frac{h}{\sqrt{\bw\mt\bLambda\bw/n}} - z_\alpha \rsk - \Phi \lsk \frac{h}{\sqrt{\sum_{k=1}^K r_k \lmk \rho_k w_k^2 + 1 -\rho_k  \rmk}}  - z_\alpha \rsk \rabs.
\eeq

Next, we will bound $I_1$ given different ranges of $h$. 
\paragraph{Case 1.}
If $h > \lsk z_\alpha + \sqrt{\log n} \rsk \sqrt{5/2 + \log n / (2 \widetilde{c}_6) }$, where $\widetilde{c}_6$ is from Lemma \ref{lemma: power pihat_G(h, vecrho, vecr) Concentration}, we have
\beq
I_1 &=& \mathbb{E}\mathcal{I}_{\lbk \mathcal{E}_1^c \rbk} \labs \Phi\lsk \frac{h}{\sqrt{\bw\mt\bLambda\bw/n}} - z_\alpha \rsk -1 + 1 - \Phi \lsk \frac{h}{\sqrt{\sum_{k=1}^K r_k \lmk \rho_k w_k^2 + 1 -\rho_k  \rmk}}  - z_\alpha \rsk \rabs \\
&=& \mathbb{E}\mathcal{I}_{\lbk \mathcal{E}_1^c \rbk} \labs \Phi\lsk z_\alpha - \frac{h}{\sqrt{\bw\mt\bLambda\bw/n}} \rsk - \Phi \lsk z_\alpha -  \frac{h}{\sqrt{\sum_{k=1}^K r_k \lmk \rho_k w_k^2 + 1 -\rho_k  \rmk}} \rsk \rabs \\
&\leq & I_2 + I_3,
\eeq
where 
\beq
I_2=\mathbb{E}\mathcal{I}_{\lbk \mathcal{E}_1^c \rbk} \Phi\lsk z_\alpha - \frac{h}{\sqrt{\bw\mt\bLambda\bw/n}} \rsk, \ I_3=\mathbb{E}\mathcal{I}_{\lbk \mathcal{E}_1^c \rbk} \Phi \lsk z_\alpha -  \frac{h}{\sqrt{\sum_{k=1}^K r_k \lmk \rho_k w_k^2 + 1 -\rho_k  \rmk}} \rsk.
\eeq
To bound $I_2$, note that on the event 
$\mathcal{E}_1^c$, we have
\beq
\frac{\bw\mt \bLambda' \bw }{n} \leq \frac{3}{2} \sum_{k=1}^K r_k(1-\rho_k) \leq \frac{3}{2} \sum_{k=1}^K r_k = \frac{3}{2} \
\Rightarrow \ 
\frac{\bw\mt \bLambda \bw }{n} \leq \sum_{k=1}^K \frac{n_k}{n} \rho_k w_k^2 + \frac{3}{2}.
\eeq
Hence 
\begin{eqnarray}
    I_2 &\leq& \mathbb{E} \Phi\lsk z_\alpha - \frac{h}{\sqrt{ \sum_{k=1}^K \frac{n_k}{n} \rho_k w_k^2 + \frac{3}{2} }} \rsk\nonumber\\
&=& \mathbb{E} \mathcal{I} \lbk \sum_{k=1}^K \frac{n_k}{n} \rho_k w_k^2 - \sum_{k=1}^K \frac{n_k}{n} \rho_k > \frac{\log n }{2 \widetilde{c}_6 } \rbk \Phi\lsk z_\alpha - \frac{h}{\sqrt{ \sum_{k=1}^K \frac{n_k}{n} \rho_k w_k^2 + \frac{3}{2} }} \rsk \nonumber\\
& & + \mathbb{E} \mathcal{I} \lbk \sum_{k=1}^K \frac{n_k}{n} \rho_k w_k^2 - \sum_{k=1}^K \frac{n_k}{n} \rho_k \leq \frac{\log n }{2 \widetilde{c}_6 } \rbk \Phi\lsk z_\alpha - \frac{h}{\sqrt{ \sum_{k=1}^K \frac{n_k}{n} \rho_k w_k^2 + \frac{3}{2} }} \rsk.
\label{eqn: power pihat_G(h, vecrho, vecr) I_2 unified}
\end{eqnarray}
By Lemma \ref{lemma: power pihat_G(h, vecrho, vecr) I2 I3}, the first term in \eqref{eqn: power pihat_G(h, vecrho, vecr) I_2 unified}
is bounded by
\beq
\p \lbk \sum_{k=1}^K \frac{n_k}{n} \rho_k w_k^2 - \sum_{k=1}^K \frac{n_k}{n} \rho_k > \frac{\log n }{2 \widetilde{c}_6  } \rbk \leq \frac{1}{\sqrt{n}}.
\eeq
The second term in \eqref{eqn: power pihat_G(h, vecrho, vecr) I_2 unified} satisfies
\begin{eqnarray}
& &\mathbb{E} \mathcal{I} \lbk \sum_{k=1}^K \frac{n_k}{n} \rho_k w_k^2 - \sum_{k=1}^K \frac{n_k}{n} \rho_k \leq \frac{\log n }{2  \widetilde{c}_6 } \rbk \Phi\lsk z_\alpha - \frac{h}{\sqrt{ \sum_{k=1}^K \frac{n_k}{n} \rho_k w_k^2 + \frac{3}{2} }} \rsk \nonumber \\
&\leq& 
\mathbb{E} \Phi\lsk z_\alpha - \frac{h}{\sqrt{ \sum_{k=1}^K \frac{n_k}{n} \rho_k + \frac{\log n}{2 \widetilde{c}_6} + \frac{3}{2} }} \rsk \nonumber \\
& {\leq}& \mathbb{E} \Phi\lsk z_\alpha - \frac{h}{\sqrt{ \frac{\log n}{2 \widetilde{c}_6 } + \frac{5}{2} }} \rsk 
\label{eqn: E I Phi for rhovec rvec concentrate part step i} \\
& {\leq}& \frac{1}{\sqrt{2 \pi n}},
\label{eqn: E I Phi for rhovec rvec concentrate part step ii}    
\end{eqnarray}
where \eqref{eqn: E I Phi for rhovec rvec concentrate part step i} follows from $\sum_{k=1}^K n_k\rho_k/n \leq 1$, \eqref{eqn: E I Phi for rhovec rvec concentrate part step ii} follows from $h > \lsk z_\alpha + \sqrt{\log n} \rsk \sqrt{5/2 + \log n / ( 2 \widetilde{c}_6 ) }$ and $\Phi(-\sqrt{\log n}) = 1 - \Phi(\sqrt{\log n}) \leq (2\pi n)^{-1/2}$ by Lemma \ref{lemma: gaussian tail}.
Combining the bounds of the first and the second term, we have $I_2 \leq n^{-1/2} + (2\pi n)^{-1/2}$.

By similar arguments with Lemma \ref{lemma: power pihat_G(h, vecrho, vecr) I2 I3}, we have
$I_3 \leq n^{-1/2} + (2\pi n)^{-1/2}$ which implies that if  $h > \lsk z_\alpha + \sqrt{\log n} \rsk \sqrt{5/2 + \log n / ( 2 \widetilde{c}_6 ) }$, we have $I_1 \lesssim 1/\sqrt{n}$ in \eqref{eqn: power pihat_G(h, vecrho, vecr) unified inequality}.

\paragraph{Case 2.} If $0 \leq h \leq \lsk z_\alpha + \sqrt{\log n} \rsk \sqrt{ 5/2 + \log n /( 2 \widetilde{c}_6 ) }$, we can bound $I_1$ in \eqref{eqn: power pihat_G(h, vecrho, vecr) unified inequality} with the mean value theorem: \allowdisplaybreaks
\beq
I_1 &{\leq }& \frac{h}{\sqrt{2 \pi}} \mathbb{E} \mathcal{I}_{ \{ \mathcal{E}_1^c \} } \labs  \frac{1}{\sqrt{\frac{\bw\mt \bLambda \bw}{n}}}  - \frac{1}{\sqrt{  \sum_{k=1}^K r_k \lmk \rho_k w_k^2 + 1 -\rho_k  \rmk }}\rabs\\
&=&  \frac{h}{\sqrt{2 \pi}} \mathbb{E} \mathcal{I}_{ \{ \mathcal{E}_1^c \} }  \frac{\labs \sqrt{\frac{\bw\mt \bLambda \bw}{n}} - \sqrt{  \sum_{k=1}^K r_k \lmk \rho_k w_k^2 + 1 -\rho_k  \rmk } \rabs}{\sqrt{\frac{\bw\mt \bLambda \bw}{n}} \sqrt{  \sum_{k=1}^K r_k \lmk \rho_k w_k^2 + 1 -\rho_k  \rmk }}.
\eeq
On the event $\mathcal{E}_1^c$, the denominator $\bw\mt \bLambda \bw/n \geq \bw\mt\bLambda'\bw/n \geq \frac{1}{2} \sum_{k=1}^K r_k(1-\rho_k)$, and $\sum_{k=1}^K r_k \lsk \rho_k w_k^2 + 1 -\rho_k  \rsk \geq \sum_{k=1}^K r_k (1-\rho_k)$ which implies that
\beq
I_1 &\leq& \frac{h}{\sqrt{2 \pi}} \frac{\sqrt{2}}{\sum_{k=1}^K r_k(1-\rho_k)} \mathbb{E} \labs \sqrt{\frac{\bw\mt \bLambda \bw}{n}} - \sqrt{  \sum_{k=1}^K r_k \lmk \rho_k w_k^2 + 1 -\rho_k  \rmk } \rabs \\
&=& \frac{h}{\sqrt{2 \pi}} \frac{\sqrt{2}}{\sum_{k=1}^K r_k(1-\rho_k)} \mathbb{E} \frac{\labs \frac{\bw\mt\bLambda\bw}{n} - \sum_{k=1}^K r_k \rho_k w_k^2 - \sum_{k=1}^K r_k(1-\rho_k) \rabs}{\sqrt{\frac{\bw\mt \bLambda \bw}{n}} + \sqrt{  \sum_{k=1}^K r_k \lmk \rho_k w_k^2 + 1 -\rho_k  \rmk }}.
\eeq
Since $\bw\mt\bLambda\bw / n = \sum_{k=1}^K (n_k/n) \rho_k w_k^2 + \bw\mt\bLambda'\bw/n $, we split the expression above into two parts,
\begin{eqnarray}
I_1 &\leq& \frac{h}{\sqrt{2 \pi}} \frac{\sqrt{2}}{\sum_{k=1}^K r_k(1-\rho_k)} \mathbb{E} \lmk \frac{ \labs \sum_{k=1}^K \frac{n_k}{n} \rho_k w_k^2  - \sum_{k=1}^K r_k \rho_k w_k^2 \rabs}{\sqrt{\frac{\bw\mt \bLambda \bw }{n}} + \sqrt{\sum_{k=1}^K \rho_k r_k w_k^2 + \sum_{k=1}^K r_k (1-\rho_k)}} \rmk \nonumber\\
& & + \frac{h}{\sqrt{2 \pi}} \frac{\sqrt{2}}{\sum_{k=1}^K r_k(1-\rho_k)} \mathbb{E} \lmk \frac{\labs \frac{\bw\mt \bLambda' \bw}{n} - \sum_{k=1}^K r_k (1-\rho_k) \rabs}{\sqrt{\frac{\bw\mt \bLambda \bw }{n}} + \sqrt{\sum_{k=1}^K \rho_k r_k w_k^2 + \sum_{k=1}^K r_k (1-\rho_k)}} \rmk \nonumber\\
& {\leq } &  \frac{h}{\sqrt{2 \pi}} \frac{\sqrt{2}}{\sum_{k=1}^K r_k(1-\rho_k)} \lmk  \sqrt{\frac{K}{n \lsk \min_{1 \leq k \leq K} r_k \rsk}} +  \frac{20}{\sqrt{3} \widetilde{c}_6 }  \frac{\sqrt{\sum_{k=1}^K r_k(1-\rho_k)}}{\lsk \min_{1 \leq k \leq K} r_k \rsk \sqrt{n}}  \rmk \label{eqn: power pihat_G(h, vecrho, vecr) I_1 h <=... unified step i}\\
& {\leq } & \frac{hK}{\sqrt{\pi} c_{\mathrm{min}} \lsk \min_{1 \leq k \leq K} r_k \rsk \sqrt{n} } + \frac{20 h}{\sqrt{3\pi} \widetilde{c}_6 \sqrt{c_{\mathrm{min}}} \lsk \min_{1 \leq k \leq K} r_k \rsk \sqrt{n}} \label{eqn: power pihat_G(h, vecrho, vecr) I_1 h <=... unified step ii}\\
& {\lesssim } &   \frac{ \log n}{  \sqrt{n} },
\label{eqn: power pihat_G(h, vecrho, vecr) I_1 h <=... unified step iii}    
\end{eqnarray}
where \eqref{eqn: power pihat_G(h, vecrho, vecr) I_1 h <=... unified step i} follows from Lemma \ref{lemma: pwr pihat_G(h, vecrho, vecr) bd dist conc}, \eqref{eqn: power pihat_G(h, vecrho, vecr) I_1 h <=... unified step ii} follows from $\sum_{k=1}^K r_k(1-\rho_k) \geq c_{\mathrm{min}} \sum_{k=1}^K r_k = c_{\mathrm{min}}$, and \eqref{eqn: power pihat_G(h, vecrho, vecr) I_1 h <=... unified step iii} uses the fact that $0 \leq h \leq \lsk z_\alpha + \sqrt{\log n} \rsk \sqrt{5/2 + \log n / ( 2 \widetilde{c}_6 ) }$.

Collecting the results from {\bf Case 1} and {\bf Case 2}, we conclude that for any $h \geq 0$, $I_1 \lesssim  \log n /   \sqrt{n} $. 
Additionally, by Lemma \ref{lemma: power pihat_G(h, vecrho, vecr) Concentration}, $\p \lbk \mathcal{E}_1 \rbk \leq 2 K \exp \lmk -\widetilde{c}_6 \lsk\min_{1 \leq k \leq K} r_k \rsk n \rmk$. 
Plugging the  bounds of $I_1$ and $\p\{\mathcal{E}_1\}$ into \eqref{eqn: power pihat_G(h, vecrho, vecr) unified inequality}, we have
\beq
\labs \pi(h,\bV) - \pi(h,\rho_1,\dots,\rho_K,r_1,\dots,r_K) \rabs 
&{\lesssim }&  \exp \lmk -\widetilde{c}_6 \lsk\min_{1 \leq k \leq K} r_k \rsk n \rmk +   \frac{ \log n}{  \sqrt{n} }  \\
& \lesssim &  \frac{\log n}{  \sqrt{n} },
\eeq
which concludes the proof of \eqref{eqn: NTS in pf of piwidetilde(h, vecrho, vecr)} and the final bound in \eqref{eqn: power difference Gaussian and Diagonal V}.

\end{proof}

\subsection{Some Properties of $\pi(h, \rho) - \pi(h, 0)$ in Corollary \ref{corollary: power pihat_G(h,rho)}}
\label{subsec: pf of Dhat(h, rho) property}

Recall the power approximation $\pi( h, \rho) =
\e \{ \Phi(h( \rho \chi_1^2 + 1 - \rho)^{-1/2} - z_\alpha)   \}$ as defined in Corollary \ref{corollary: power pihat_G(h,rho)}, where $\rho \in [0, 1-c_{\min}]$ with $c_{\min} \in (0,1)$ is the correlation in $\bV_{1,\rho} = \rho \bone_n \bone_n\mt + (1-\rho) \bI_n$, and $h \in [0, \infty)$ is the signal in $\beta_j$.
Define the power difference between $\rho \geq 0$ and $\rho = 0$ as 
\begin{equation}
\Delta \pi(h, \rho) = \pi(h, \rho) - \pi(h, 0).
\label{eqn: D pi(h, rho)}
\end{equation}

\begin{lemma} \label{lemma: properties of D pi}
For $\Delta \pi(h, \rho)$ defined in \eqref{eqn: D pi(h, rho)}, we have
\begin{enumerate}
    \item Power gain with small signal: 
    given $\rho \in (0, 1-c_{\mathrm{min}}]$, if $h \leq \frac{1}{2}\sqrt{1-\rho} ( z_\alpha + \sqrt{z_\alpha^2 + 12} )$, then $\Delta \pi (h, \rho)>0$.
    \item Power loss with large signal: 
    given $\rho \in (0, 1-c_{\mathrm{min}}]$, if $h > \max ( 2 z_\alpha, \{ z_\alpha \int_1^{\infty} f(t) [ 1- (\rho t + 1 -\rho)^{-1/2} ] \textup{d}t \}^{-1} \p \{ \chi_1^2<1 \} )$, then $\Delta \pi (h, \rho)<0$, where $f(t)$ is the density of $\chi_1^2$.
    \item Diminishing power difference: given $\rho \in [0, 1-c_{\mathrm{min}}]$, we have $\lim_{h \to \infty} \Delta \pi (h, \rho) = 0$.
\end{enumerate}
\end{lemma}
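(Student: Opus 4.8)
The unifying observation is that all three claims concern a single Jensen gap. Writing $\delta=\rho\chi_1^2+1-\rho$ we have $E[\delta]=1$, so $\pi(h,0)=\Phi(h-z_\alpha)=\psi(1)=\psi(E[\delta])$ where $\psi(\delta):=\Phi(h\delta^{-1/2}-z_\alpha)$, and hence $\Delta\pi(h,\rho)=E[\psi(\delta)]-\psi(E[\delta])$. The sign of this gap is governed by the convexity of $\psi$, which I would pin down by direct differentiation. A short computation gives $\psi''(\delta)=\tfrac{h}{4}\,\phi(h\delta^{-1/2}-z_\alpha)\,\delta^{-5/2}\,(3+z_\alpha s-s^2)$ with $s:=h\delta^{-1/2}$, so the sign of $\psi''$ is exactly that of the quadratic $3+z_\alpha s-s^2$, which is nonnegative precisely when $s\le s_+:=\tfrac12(z_\alpha+\sqrt{z_\alpha^2+12})$ (its other root is negative, and $s>0$).

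For part 1, I would use that $\delta\ge 1-\rho$ on the support of $\chi_1^2$, whence $s=h\delta^{-1/2}\le h(1-\rho)^{-1/2}$. The hypothesis $h\le\tfrac12\sqrt{1-\rho}\,(z_\alpha+\sqrt{z_\alpha^2+12})=s_+\sqrt{1-\rho}$ then forces $s\le s_+$ for every $\delta$ in the support, so $\psi''\ge 0$ there and $\psi$ is convex, and in fact strictly convex on the interior $(1-\rho,\infty)$ (where $s<s_+$ strictly). Since $\rho>0$ makes $\delta$ nondegenerate with positive mass on that interior, strict Jensen's inequality yields $\Delta\pi(h,\rho)=E[\psi(\delta)]-\psi(E[\delta])>0$.

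For part 2 no global convexity is available ($\psi$ is concave only where $s>s_+$, i.e.\ for small $\delta$), so Jensen fails and I would split at $\delta=1$. On $\{\chi_1^2<1\}$ the integrand $\psi(\delta)-\psi(1)$ is positive and at most $1-\Phi(h-z_\alpha)=\overline{\Phi}(h-z_\alpha)$, bounding the ``gain'' by $\overline{\Phi}(h-z_\alpha)\,\mathrm{pr}(\chi_1^2<1)$. On $\{\chi_1^2>1\}$ I would lower-bound the loss magnitude $\Phi(h-z_\alpha)-\Phi(h\delta^{-1/2}-z_\alpha)$ by the mean value theorem as $\phi(\xi)\,h(1-\delta^{-1/2})$; here the role of $h>2z_\alpha$ is to guarantee $h-z_\alpha\ge z_\alpha-h\delta^{-1/2}$ for every $\delta$, so that $h-z_\alpha$ is the point of $[h\delta^{-1/2}-z_\alpha,\,h-z_\alpha]$ farthest from the origin and unimodality of $\phi$ gives $\phi(\xi)\ge\phi(h-z_\alpha)$ uniformly. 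Integrating yields $|\mathrm{loss}|\ge \phi(h-z_\alpha)\,h\,E[(1-\delta^{-1/2})\mathcal{I}\{\delta>1\}]$, whose last factor is exactly $\int_1^\infty f(t)[1-(\rho t+1-\rho)^{-1/2}]\,dt$. Converting $\phi$ to $\overline{\Phi}$ through the Gaussian tail bound of Lemma \ref{lemma: gaussian tail} ($\phi(x)>x\overline{\Phi}(x)$) together with $h-z_\alpha>z_\alpha$ makes the common factor $\overline{\Phi}(h-z_\alpha)$ cancel against the gain bound, and the second threshold on $h$ is precisely what forces $z_\alpha h\int_1^\infty f(t)[1-(\rho t+1-\rho)^{-1/2}]\,dt>\mathrm{pr}(\chi_1^2<1)$, giving $\Delta\pi(h,\rho)<0$.

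For part 3 I would invoke dominated convergence: since $\delta<\infty$ almost surely, $h\delta^{-1/2}\to\infty$ and $\psi(\delta)\to 1$ pointwise as $h\to\infty$, with $0\le\psi\le 1$, so $E[\psi(\delta)]\to 1$; as $\Phi(h-z_\alpha)\to 1$ as well, $\Delta\pi(h,\rho)\to 0$ (the case $\rho=0$ being identically zero). The main obstacle is the loss lower bound in part 2: one must control $\phi(\xi)$ from below \emph{uniformly} over the whole loss region, including large $\delta$ where $h\delta^{-1/2}-z_\alpha$ turns negative, and it is exactly the assumption $h>2z_\alpha$ that keeps $h-z_\alpha$ the extreme point and rescues the monotonicity argument; the remaining work — matching the Gaussian-tail conversion to the stated constant and verifying the identity for the integral factor — is routine.
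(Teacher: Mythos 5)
Your proof is correct and takes essentially the same route as the paper's: part 1 via the sign of the quadratic $3+z_\alpha s-s^2$ and strict Jensen (the paper differentiates in $T$ rather than $\delta=\rho T+1-\rho$, which is the same argument up to an affine reparametrization), part 2 via the split at $t=1$ with the mean value theorem, the bound $\phi(\xi)\ge\phi(h-z_\alpha)$ secured by $h>2z_\alpha$, and the Gaussian tail inequality of Lemma \ref{lemma: gaussian tail}, and part 3 via bounded convergence. Your treatment of strictness in Jensen's inequality in part 1 is in fact slightly more careful than the paper's.
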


\begin{proof}[Proof of Lemma \ref{lemma: properties of D pi}]
\

\begin{enumerate}

\item 
Denote the $\chi_1^2$ random variable by $T$, \eqref{eqn: D pi(h, rho)} reduces to  $\Delta \pi (h, \rho) = \mathbb{E}_T [  \Phi (  h/\sqrt{\rho T + 1 - \rho} - z_\alpha  ) - \Phi (  h - z_\alpha )  ]$. 
The integrand has second order derivative
\begin{eqnarray}
& &\frac{\partial^2 \lmk \Phi \lsk h/\sqrt{\rho T + 1 - \rho} - z_\alpha \rsk - \Phi \lsk h - z_\alpha \rsk \rmk}{\partial T^2} \nonumber\\
&=& \frac{-h \rho\pwtwo \phi \lsk h/\sqrt{\rho T + 1 - \rho} - z_\alpha  \rsk}{4(\rho T + 1 - \rho)^{5/2}} \lmk  \lsk h/\sqrt{\rho T + 1 - \rho} \rsk^2 - z_\alpha \lsk h/ \sqrt{\rho T + 1 - \rho}\rsk - 3\rmk,
\label{eqn: Phi''(h/sqrt(rho T  + 1 - rho)) - Phi(h - z_alpha)}  \nonumber  
\end{eqnarray}
where $\phi(\cdot)$ is the density of $\mathcal{N}(0,1)$.
If $T> \rho^{-1} \{ [ 2h/( z_\alpha + \sqrt{z_\alpha^2 + 12}) ]^2 -(1-\rho) \}$, the second order derivative is positive. 
Hence if $[ 2h/( z_\alpha + \sqrt{z_\alpha^2 + 12}) ]^2 -(1-\rho) \leq 0$, the second order derivative is positive on the support $T>0$.
By Jensen's inequality,  $\Phi \lsk h/\sqrt{\rho T + 1 - \rho} - z_\alpha \rsk - \Phi(h - z_\alpha)$ is strictly convex, which implies that  $\Delta \pi (h, \rho) > 0$.

\item 
Rewrite $\Delta \pi (h, \rho)$ as
\begin{eqnarray}
    \Delta \pi (h, \rho) &=& \int_0^1 f(t) \lmk \Phi\lsk  \frac{h}{\sqrt{\rho t + 1 - \rho}} - z_\alpha \rsk - \Phi \lsk h-z_\alpha \rsk  \rmk \textup{d}t \nonumber\\
& & - \int_1^\infty f(t) \lmk \Phi \lsk h-z_\alpha \rsk - \Phi\lsk  \frac{h}{\sqrt{\rho t + 1 - \rho}} - z_\alpha \rsk \rmk \textup{d}t.
\label{eqn: Dhat_G(h,rho) upper bd unified for large h}
\end{eqnarray}
For the first term in \eqref{eqn: Dhat_G(h,rho) upper bd unified for large h}, if $h > z_\alpha$, by Lemma \ref{lemma: gaussian tail}, it is bounded by
\begin{eqnarray}
\int_0^1 f(t) \lmk 1- \Phi \lsk h-z_\alpha \rsk \rmk dt \leq \frac{\p \lbk \chi_1^2 < 1 \rbk \phi \lsk h - z_\alpha \rsk}{h-z_\alpha}.
\label{eqn: Dhat_G(h,rho) large h term 1}    
\end{eqnarray}
For the second term in \eqref{eqn: Dhat_G(h,rho) upper bd unified for large h}, we have
\begin{eqnarray}
& &\int_1^\infty f(t) \lmk \Phi \lsk h-z_\alpha \rsk - \Phi\lsk  \frac{h}{\sqrt{\rho t + 1 - \rho}} - z_\alpha \rsk \rmk \textup{d}t \nonumber\\
&\geq& \int_1^\infty f(t) \phi(h - z_\alpha) \lsk h-z_\alpha - \frac{h}{\sqrt{\rho t + 1 - \rho}} + z_\alpha\rsk \textup{d}t \label{eqn: Dhat_G(h,rho) large h term 2 step i} \\
&\geq& h \phi \lsk h-z_\alpha  \rsk  \int_1^\infty f(t) \lmk  1 - \lsk \rho t + 1 - \rho \rsk^{-1/2} \rmk \textup{d}t,
\label{eqn: Dhat_G(h,rho) large h term 2 step ii}  
\end{eqnarray}
where \eqref{eqn: Dhat_G(h,rho) large h term 2 step i} follows from the mean value theorem and the fact that if $h > 2 z_\alpha$ and $t > 1$, we have $\labs h/\sqrt{\rho t + 1 - \rho} - z_\alpha \rabs \leq  h-z_\alpha$. 

Plugging \eqref{eqn: Dhat_G(h,rho) large h term 1} and \eqref{eqn: Dhat_G(h,rho) large h term 2 step ii} into \eqref{eqn: Dhat_G(h,rho) upper bd unified for large h}, if $h>2 z_\alpha$, we have
\beq
\Delta \pi (h,\rho) \leq \phi(h-z_\alpha) \lbk \frac{\p \lsk \chi_1^2 < 1 \rsk}{z_\alpha} - h \int_1^\infty f(t)  \lmk  1 - \lsk \rho t + 1 - \rho \rsk^{-1/2} \rmk \textup{d}t \rbk,
\eeq
which is negative when $h> \{ z_\alpha  \int_1^\infty f(t)  [   1 - \lsk \rho t + 1 - \rho \rsk^{-1/2}  ] \textup{d}t \}\inverse \p \{  
\chi_1^2 <1 \}$.

\item By the monotone convergence theorem, for any $\rho \in [0, 1-c_{\min}]$, we have $\lim_{h \to \infty} \pi (h, \rho) = \e \{ \lim_{h \to \infty} \Phi(h( \rho \chi_1^2 + 1 - \rho)^{-1/2} - z_\alpha) \} = 1$, which completes the proof.
\end{enumerate}
\end{proof}

\end{document}